\documentclass[pdftex,a4paper,11pt,reqno]{amsart}

\usepackage{amsmath, amssymb, amsthm, mathtools}
\usepackage{mathrsfs} 
\usepackage{enumerate}

\usepackage{color}

\usepackage{comment}

\usepackage{verbatim}

\usepackage[pdftex]{graphicx}
\usepackage{float}

\usepackage{url}

\newcommand{\A}{\mathcal{A}}
\newcommand{\R}{\mathbb{R}}

\newcommand{\N}{\mathbb{N}}
\newcommand{\Z}{\mathbb{Z}}
\newcommand{\T}{\mathbb{T}}

\DeclareMathOperator{\BUC}{BUC}
\DeclareMathOperator{\USC}{USC}
\DeclareMathOperator{\LSC}{LSC}
\DeclareMathOperator{\Lip}{Lip}
\DeclareMathOperator{\AC}{AC}

\theoremstyle{plain}
\newtheorem{theorem}{Theorem}[section]
\newtheorem*{theorem*}{Theorem}
\newtheorem{lemma}[theorem]{Lemma}
\newtheorem*{lemma*}{Lemma}

\newtheorem*{corollary*}{Corollary}
\newtheorem{proposition}[theorem]{Proposition}
\newtheorem*{proposition*}{Proposition}

\newtheorem*{innercustomtheorem}{\customtheoremname}
\newcommand{\customtheoremname}{}

\theoremstyle{definition}
\newtheorem{definition}[theorem]{Definition}
\newtheorem*{definition*}{Definition}
\newtheorem{example}[theorem]{Example}
\newtheorem*{example*}{Example}
\newtheorem{remark}[theorem]{Remark}
\newtheorem*{remark*}{Remark}

\newtheorem*{notation*}{Notation}
\newtheorem*{innercustomdefinition}{\customdefinitionname}
\newcommand{\customdefinitionname}{}
\newenvironment{customdefinition}[1]
  {\renewcommand{\customdefinitionname}{#1}%
   \begin{innercustomdefinition}}
  {\end{innercustomdefinition}}

\newenvironment{subproof}[1][Proof]
  {%
   \begin{proof}[#1]}
  {\end{proof}}

\usepackage[
  pdftex,
  bookmarks=true,
  bookmarksnumbered=true,
  bookmarkstype=toc,
  hidelinks]{hyperref}

\usepackage{cleveref}
\crefformat{equation}{(#2#1#3)}

\crefname{theorem}{theorem}{theorems}
\Crefname{theorem}{Theorem}{Theorems}
\crefname{proposition}{proposition}{propositions}
\Crefname{proposition}{Proposition}{Propositions}
\crefname{lemma}{lemma}{lemmas}
\Crefname{lemma}{Lemma}{Lemmas}
\crefname{corollary}{corollary}{corollaries}
\Crefname{corollary}{Corollary}{Corollaries}
\crefname{definition}{definition}{definitions}
\Crefname{definition}{Definition}{Definitions}
\crefname{remark}{remark}{remarks}
\Crefname{remark}{Remark}{Remarks}
\crefname{example}{example}{examples}
\Crefname{example}{Example}{Examples}

\numberwithin{equation}{section}

\usepackage{pgfplots}
\pgfplotsset{compat=1.18}

\title[Quantitative homogenization for obstacle problems]
{Quantitative homogenization for obstacle problems of convex Hamilton--Jacobi equations}
\author[Takuya Sato]{Takuya Sato}
\author[Jingcheng Ye]{Jingcheng Ye}
\address[Takuya Sato]{Corresponding author. 
Graduate School of Mathematical Sciences, 
The University of Tokyo. 3-8-1 Komaba, 
Meguro-ku, Tokyo 153-8914, Japan.}
\email{satoh-t@g.ecc.u-tokyo.ac.jp}
\address[Jingcheng Ye]{KPMG Consulting Co., Ltd.
OTEMACHI FINANCIAL CITY South Tower,
1-9-7 Otemachi,
Chiyoda-ku, Tokyo 100-0004, Japan.}
\email{yejc.pde@gmail.com}
\date{\today}

\begin{document}

\begin{abstract}
We study the homogenization for obstacle problems of convex Hamilton--Jacobi equations in periodic media with obstacle functions depending on both macroscopic and microscopic spatial variables.
Based on the methods of analyzing variational representation formulas,
we establish the optimal convergence rate $O( \varepsilon )$ for the homogenization limit
under the assumption that the obstacle function is Lipschitz continuous.
Moreover, we show that this convergence rate may deteriorate due to the presence of the obstacle when the obstacle function is not Lipschitz continuous, by constructing an example.
\end{abstract}

\maketitle


\section{Introduction}
We study the homogenization problem of Hamilton--Jacobi equations with obstacles.
For each $\varepsilon >0$, 
let $u^\varepsilon$ be the unique viscosity solution to the obstacle problem of an evolutionary equation of the form:
\begin{align}\label{OP}
  \left\{
  \begin{aligned}
  &\max\left\{
  u^\varepsilon_t + H\left( \frac{x}{\varepsilon}, Du^\varepsilon \right),\ 
  u^\varepsilon - \psi \left(x, \frac{x}{\varepsilon} \right)
  \right\} = 0
  && \text{in } \R^n \times (0,\infty),\\
  &u^\varepsilon (x,0) = u_0(x)
  && \text{on } \R^n,
  \end{aligned}
  \right.\tag{$\mathrm{OP}_\varepsilon$}
\end{align}
where $H$ is a given Hamiltonian,
$\psi: \R^n \times \R^n \to \R$ is a given obstacle function
and $u_0$ is an initial condition.
Under appropriate assumptions for $H, \psi$ and $u_0$, 
we can expect that, as $\varepsilon \to +0$,
$u^\varepsilon$ converges locally uniformly 
to the function $u:\R^n\times[0,\infty)\to\R$ which is the unique viscosity solution 
to the obstacle problem:
\begin{align}\label{effOP}
    \left\{
    \begin{aligned}
        &\max\left\{
        u_t + \overline{H}(Du),\ 
        u-\overline{\psi}(x)
        \right\}=0
        &&\text{in }\R^n \times (0,\infty),\\
        &u(x,0)=u_0(x)
        &&\text{on } \R^n.
    \end{aligned}
    \right.\tag{$\overline{\mathrm{OP}}$}
\end{align}
Here, $\overline{H}\in C(\R^n)$ is the effective Hamiltonian
determined by the cell problem
\begin{align}\label{Cell}
    H(y,p+Dv)=\overline{H}(p) \quad \text{in } \T^n:= \R^n/\Z^n
    \tag{$\mathrm{Cell}_p$}
\end{align}
for each $p\in\R^n$
(more precisely, $\overline{H}(p)$ is the unique constant such that 
(\ref{Cell}) has a $\Z^n$-periodic viscosity solution $v$),
and the function $\overline{\psi}:\R^n\to\R$,
which we call \textit{the effective obstacle}, is defined by
\begin{align}
    \overline{\psi}(x) = \min_{y\in\R^n}\psi(x,y).
\end{align}
Throughout this paper, we assume, for (\ref{OP}), the Hamiltonian $H=H(y,p):\R^n\times \R^n \to \R$ satisfies:
\begin{enumerate}[$(\mathrm{A}1)$]
    \item $H\in C(\R^n\times\R^n)$ 
    and the map $y\mapsto H(y,p)$ is $\Z^n$-periodic for each $p\in\R^n$,
    \label{assump:H-peri}
    \item $H$ is coercive in $p$, that is, \label{assump:coer}
    \begin{align*}
        \lim_{|p|\to\infty} \inf_{y\in\R^n} H(y,p) = +\infty,
    \end{align*}
    \item the map $p\mapsto H(y,p)$ is convex for each $y\in\R^n$.\label{assump:convex}
\end{enumerate}
Additionaly, we assume that the obstacle function $\psi=\psi(x,y):\R^n\times\R^n\to\R$  and the initial datum $u_0:\R^n\to \R$ satisfy:
\begin{enumerate}[$(\mathrm{A}1)$]
\setcounter{enumi}{3}
    \item $\psi\in C(\R^n\times\R^n)$, $\psi$ is bounded below
    and the map $y\mapsto \psi(x,y)$ is $\Z^n$-periodic for each $x\in\R^n$,\label{assump:psi-peri}
    \item $u_0\in\BUC(\R^n)\cap\Lip(\R^n)$ and satisfies
    \begin{align*}
        u_0(x) \leq \min_{y\in\R^n} \psi(x,y) \ (\ =\overline{\psi}(x)\ ) \quad \text{for all } x\in\R^n.
    \end{align*}
    \label{assump:initial}
\end{enumerate}

\subsection*{Obstacle problems for Hamilton--Jacobi equations}
Obstacle problems for Hamilton--Jacobi equations arise as PDEs satisfied by the value functions of optimal stopping problems in control theory. 
Let $L=L(y,q)$ be the Lagrangian determined by the following Legendre transform of $H$:
\begin{align*}
    L(y,q)
    =\sup_{p\in \R^n}
    \left\{
    p\cdot q - H(y,p)
    \right\},
    \quad (y,q)\in \R^n\times\R^n.
\end{align*}
Since $H(y,p)$ is $\Z^n$-periodic in $y$,
and convex and coercive in $p$,
it follows that $L(y,q)$ is also $\Z^n$-periodic in $y$,
and convex and coercive in $q$.
For each $\varepsilon>0$ and $(x,t)\in\R^n\times[0,\infty)$,
we consider the minimizing problem of the cost functional
\begin{align*}
    J^\varepsilon(x,t,\gamma,\theta)
    :=\int_\theta^t L\left(
    \frac{\gamma(s)}{\varepsilon},\dot{\gamma}(s)\right)ds
    +\mathbf{1}_{\{\theta=0\}}u_0(\gamma(0))
    +\mathbf{1}_{\{\theta>0\}}\psi\left(\gamma(\theta),\frac{\gamma(\theta)}{\varepsilon}\right),
\end{align*}
where the controller can choose any pair $(\gamma,\theta)$
of an admissible trajectry and a stopping time with
\begin{align*}
    \gamma \in \A(x,t):=\{\gamma\in \AC([0,t];\R^n)\; :\; \gamma(t)=x\}
    \quad\text{and}
    \quad 
    \theta \in [0,t],
\end{align*}
respectively.
Here, $\mathbf{1}_E:[0,t]\to\{0,1\}$ denotes the indicator function of the set $E\subset[0,t]$.
A distinctive feature of this optimal control problem is
that the controller may either continue to optimize the trajectories satisfying the terminal condition or stop the process at any time by paying the stopping cost $\psi$.
Then, we consider the value function $u^\varepsilon$ for this control problem determined by
\begin{align}
    u^\varepsilon(x,t) = \inf_{\substack{\gamma\in\mathcal{A}(x,t)\\\theta\in[0,t]}}
    J^\varepsilon(x,t,\gamma,\theta).
    \label{eq:value-fcn-intro}
\end{align}
One can show that this value function $u^\varepsilon$ satisfies the dynamic programming principle, that is, 
$u^\varepsilon$ satisfies
\begin{align*}
    u^\varepsilon(x,t)
    =\inf_{\substack{\gamma\in\mathcal{A}(x,t)\\\theta\in[0,t]}}\left\{
    \int_{\theta\vee s}^t L\left(\frac{\gamma(r)}{\varepsilon},\dot{\gamma}(r)\right)dr
    +\mathbf{1}_{\{\theta\leq s\}}u^\varepsilon(\gamma(s),s)
    +\mathbf{1}_{\{\theta> s\}}\psi \left(\gamma(\theta),\frac{\gamma(\theta)}{\varepsilon}\right)
    \right\}
\end{align*}
for all $(x,t)\in\R^n\times[0,\infty)$ and $s\in[0,t]$.
Here, 
we use the notation 
$a\vee b = \max\{ a, b\}$
and $a\wedge b =\min\{a, b\}$
for $a,b\in\R$.
Moreover,
this dynamic programming principle equality
implies that $u^\varepsilon$ is a viscosty solution to (\ref{OP}).

From the viewpoint of control, 
the obstacle represents the cost that the controller needs to pay for stopping the process, 
and the first equation in (\ref{OP}), which is called \textit{the Hamilton--Jacobi variational inequality of obstacle type}, describes the competition between continuation and stopping.
Optimal stopping problems form a fundamental class of optimal control problems 
whose value functions satisfy variational inequalities.
In particular, 
in the absence of control acting on the dynamics,
such problems were studied via the dynamic programming approach 
before the development of viscosity solution theory, 
and the analysis relied on the free boundary separating the continuation and stopping regions. 
We refer to \cite{MR592928, MR636737, MR651053} for classical results and further developments.
For the viscosity solution theory of Hamilton--Jacobi equations arising from optimal stopping problems, we refer to \cite{MR921827,MR1484411}.
In Section 2, 
we summarize some fundamental properties of viscosity solutions to evolutionary Hamilton--Jacobi equations with obstacles 
and include a complete proof of representation formula (\ref{eq:value-fcn-intro}).

\subsection*{Previous literatures for quantitative homogenization}
Homogenization problems of Hamilton--Jacobi equations in periodic media was initiated in the seminal work of Lions, 
Papanicolaou and Varadhan \cite{LPV87}, 
and has become an important topic in the theory of viscosity solutions with connections to optimal control, 
calculus of variations and dynamical systems. 
Consider the following Cauchy problem for a Hamilton--Jacobi equation: 
\begin{align}
    \left\{
  \begin{aligned}
  &
  u^\varepsilon_t + H\left( \frac{x}{\varepsilon}, Du^\varepsilon \right)= 0
  && \text{in } \R^n \times (0,\infty),\\
  &u^\varepsilon (x,0) = u_0(x)
  && \text{on } \R^n.
  \end{aligned}
  \right.\label{CP}
\end{align}
Under suitable assumptions on $H$ such as (A\ref{assump:H-peri}) and (A\ref{assump:coer}), and on $u_0$, it is known that the unique viscosity solution $u^\varepsilon$ to (\ref{CP}) converges, as $\varepsilon \to +0$, locally uniformly to a function $u$ which solves an effective Hamilton--Jacobi equation:
\begin{align}
    \left\{
  \begin{aligned}
  &
  u_t + \overline{H}\left(Du \right)= 0
  && \text{in } \R^n \times (0,\infty),\\
  &u(x,0) = u_0(x)
  && \text{on } \R^n.
  \end{aligned}
  \right.\label{effCP}
\end{align}
Here, 
$\overline{H}(p)$ is determined by cell problem (\ref{Cell}) and it is known that, 
the function $\overline{H}:\R^n\to\R$ is continuous and coercive, and if $H$ is convex, 
then $\overline{H}$ is also convex.
It was proved in \cite{LPV87} and \cite{MR1159184} 
that the unique viscosity solution $u^\varepsilon$ to (\ref{CP}) converges locally uniformly 
to the unique viscosity solution $u$ to (\ref{effCP}).
The convergence can be established by a PDE approach based on Evans’ perturbed test function method,
a viscosity solution technique that combines the doubling-of-variables argument with correctors $v$ obtained from the cell problem.

Next, we review some representative results on quantitative homogenization theory for Hamilton--Jacobi equations.
Quantitative homogenization aims to estimate the rate of convergence of $u^\varepsilon$ to a homogenized solution $u$.
For general coercive Hamiltonians, 
Capuzzo\nobreakdash-Dolcetta and Ishii \cite{MR1871349} obtained the convergence rate $O(\varepsilon^{1/3})$ for the homogenization of (\ref{CP}).
In the convex setting,
Mitake, Tran, and Yu \cite{MR3951696} developed a quantitative approach based on representation formulas 
from optimal control theory and fine properties of minimizing curves,
making essential use of backward characteristics.
From the same viewpoint,
Tran and Yu \cite{MR4946874} further refined the analysis and established the optimal convergence rate $O(\varepsilon)$ 
for periodic homogenization of convex Hamilton--Jacobi equations.
A key ingredient in \cite{MR4946874} is the use of a metric-type quantity $m(t, x, y)$, defined as
\begin{align*}
    m(t,x,y)=\inf\left\{\int_0^t L(\eta(s),\dot{\eta}(s))ds\; :\; \eta\in \AC ([0,t];\R^n),\;
    \eta(0)=x,\;\eta(t)=y\right\}.
\end{align*}
From the viewpoint of optimal control, 
$m(t,x,y)$ represents the minimum cost 
traveling from $x\in\R^n$ to $y\in\R^n$ in a fixed time $t>0$. 
They established an approximate sub and superadditivity property for $m$. 
More precisely, 
they proved that for each $M>0$,
there exists $C>0$ depending only on $n$, $L$ and $M$ such that 
\begin{align*}
    2m(t,x,y)-C \leq m(2t,2x,2y) \leq 2m(t,x,y) + C
\end{align*}
holds for every $x,y$ and $t$ with $|x-y|\leq Mt$. 
Subadditivity of $m$ implies, 
via Fekete’s lemma, 
that the homogenized limit 
\begin{align*}
    \overline{m}(t,x,y):=
    \lim_{k\to\infty} \frac{1}{k}m(kt,kx,ky)
\end{align*}
exists for each $x,y$ and $t$. 
Moreover, combining this with superadditivity, it follows that: for every $M>0$,
there exists a constant $C>0$ depending only on $n$,
$L$ and $M$ such that 
\begin{align}
    \left|
        \varepsilon m\left(
            \frac{t}{\varepsilon},
            \frac{x}{\varepsilon},
            \frac{y}{\varepsilon}
        \right)
        -\overline{m}(t,x,y)
    \right|
    \leq C\varepsilon
    \label{eq:conv-rate-metric}
\end{align}
holds for every $x,y\in\R^n$ and $t>0$ with $|x-y|\leq Mt$.

The proof of the superadditivity property relies on a geometric construction, 
often referred to as a curve surgery argument:
roughly speaking, 
the idea is to cut an optimal trajectory via a curve cutting lemma due to topological argument of Burago \cite{MR1279391}, 
shift the resulting pieces using periodicity,
and patch them back together to build admissible competitors.

Using the optimal control representation for Cauchy problems together with a change of variables,
the unique viscosity solution $u^\varepsilon$ to (\ref{CP}) can be rewritten in the form
\begin{align}
    u^\varepsilon (x,t)
    =\inf_{z\in\R^n}\left\{
    \varepsilon m\left(\frac{t}{\varepsilon},\frac{z}{\varepsilon},\frac{x}{\varepsilon}\right)+u_0(z)
    \right\}.\label{eq:metric-rep}
\end{align}
Combining the homogenization result for (\ref{CP}) with the Hopf--Lax formula for (\ref{effCP}), 
one obtains the following representation formula for the homogenized solution $u$ of (\ref{effCP}):
\begin{align}
    u(x,t)
    =\inf_{z\in\R^n}\left\{ t\overline{L}\left(
    \frac{x-z}{t}
    \right)+u_0(z) \right\}
    =\inf_{z\in\R^n}\left\{
    \overline{m}(t,z,x) + u_0(z)
    \right\}.\label{eq:homo-metric-rep}
\end{align}
The estimate $\lVert u^\varepsilon -u\rVert_{L^\infty (\R^n\times[0,\infty))}\leq C\varepsilon$ then follows by applying (\ref{eq:conv-rate-metric}) to (\ref{eq:metric-rep}) and (\ref{eq:homo-metric-rep}). 
This observation forms the basis of the proof of the optimal convergence rate $O(\varepsilon)$. 

Building on the optimal control approach,
sharp convergence rates have recently been established in a variety of homogenization settings for convex Hamilton--Jacobi equations.
The quantitative theory has subsequently been extended to Cauchy problems with multiscale Hamiltonians $H(x,x/\varepsilon,p)$ \cite{MR4198478, MR4646025}, 
$t/\varepsilon$-variable dependent Hamiltonians \cite{MR4765431}, 
$u/\varepsilon$-variable dependent Hamiltonians \cite{MNT25},
weakly coupled Hamilton--Jacobi systems \cite{MR4911666}, 
viscous Hamilton--Jacobi equations \cite{MR4828487}, among others.
Quantitative homogenization for initial--boundary value problems on perforated domains has also been investigated. 
The metric approach of \cite{MR4946874} 
was extended to state-constraint, Neumann, and Dirichlet boundary conditions on perforated domains 
in \cite{MR4870323}, \cite{MR5053934}, and \cite{HT25}, respectively. 
On the other hand, 
quantitative homogenization and sharp error estimates 
for obstacle problems are still scarce,
although variational inequalities of obstacle type naturally arise in optimal control theory. 
Our present work is motivated by this gap in the literature,
as well as by the recent quantitative studies on perforated domains \cite{MR4870323, MR5053934, HT25}.

\subsection*{Main results}

The main theorem in this paper is the following quantitative homogenization result for (\ref{OP}).

\begin{theorem}\label{thm:main-thm}
    Assume $(\mathrm{A}\ref{assump:H-peri})$--$(\mathrm{A}\ref{assump:initial})$.
    Let $u^\varepsilon$ and $u$ be the unique viscosity solutions 
    to $($\ref{OP}$)$ for each $\varepsilon>0$ 
    and $($\ref{effOP}$)$, respectively.
    Then, as $\varepsilon \to +0$,
    $u^\varepsilon$ converges locally uniformly on $\R^n\times [0,\infty)$ to $u$.
    Moreover, if $\psi\in\Lip(\R^n\times\R^n)$,
    then there exists $C>0$ depending only on $H$, $\psi$ and $u_0$ such that
    \begin{align*}
        \lVert u^\varepsilon - u \rVert_{L^\infty(\R^n\times (0,\infty))}
        \leq C\varepsilon
        \quad \text{for all } \varepsilon >0.
    \end{align*}
\end{theorem}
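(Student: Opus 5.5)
The approach is to work entirely with representation formulas. By the control formula (\ref{eq:value-fcn-intro}), rescaled as in (\ref{eq:metric-rep}), one has
\begin{align*}
u^\varepsilon(x,t)=\min\Big\{\inf_{z}\big\{\varepsilon m(\tfrac{t}{\varepsilon},\tfrac{z}{\varepsilon},\tfrac{x}{\varepsilon})+u_0(z)\big\},\ \inf_{z,\,0<\tau\le t}\big\{\varepsilon m(\tfrac{\tau}{\varepsilon},\tfrac{z}{\varepsilon},\tfrac{x}{\varepsilon})+\psi(z,\tfrac{z}{\varepsilon})\big\}\Big\}.
\end{align*}
Here $\tau=t-\theta$ is the time remaining after the stopping position $z=\gamma(\theta)$. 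For (\ref{effOP}) I would prove the analogous formula
\begin{align*}
u(x,t)=\min\Big\{\inf_z\{t\overline{L}(\tfrac{x-z}{t})+u_0(z)\},\ \inf_{z,\,0<\tau\le t}\{\tau\overline{L}(\tfrac{x-z}{\tau})+\overline{\psi}(z)\}\Big\}.
\end{align*}
This is the optimal stopping formula for the spatially homogeneous Lagrangian $\overline{L}$. It follows from the same argument that gives (\ref{eq:value-fcn-intro}), using that $\overline{H}$ is convex and coercive and that $\overline{\psi}$ is continuous. Uniqueness for (\ref{effOP}) then identifies this function as $u$. Since $\overline m(\tau,z,x)=\tau\overline L((x-z)/\tau)$, both formulas are infima of a metric term plus a cost.

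The first step is to reduce to bounded velocities. Since $u_0\le\overline\psi\le\psi$ and $u_0$ is Lipschitz, one has $u^\varepsilon\le u_0(x)+t\max_y L(y,0)$, and the same bound holds for $u$. Superlinearity of $L$, together with $\psi$ bounded below and $u_0$ Lipschitz, should then show that near-minimizers in both formulas satisfy $|x-z|\le M\tau$ for some $M$ independent of $\varepsilon$. I expect the obstacle branch with very small $\tau$ to need care. Since $\psi$ is bounded below, a large cost $\tau\overline L$ cannot be compensated when $\tau$ is small. For the Lipschitz-rate part I would instead compare with stopping at $z=x$ with $\tau\to0$, which costs $\psi(x,\cdot)$ up to $o(1)$. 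On $|x-z|\le M\tau$ the estimate (\ref{eq:conv-rate-metric}) replaces $\varepsilon m$ by $\overline m$ with an $O(\varepsilon)$ error. This already handles the $u_0$-branch exactly as in the Cauchy problem.

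The main obstacle is the obstacle branch. Its cost is the oscillating $\psi(z,z/\varepsilon)$, whereas the effective cost is $\overline\psi(z)=\min_y\psi(z,y)$, so the microscopic position at stopping must be optimized at cost $O(\varepsilon)$. Comparing $\psi(z,z/\varepsilon)\ge\overline\psi(z)-0$ gives $u^\varepsilon\ge u-C\varepsilon$ immediately, since $\psi\ge\overline\psi$ pointwise.

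For the upper bound, take a near-optimal $(z,\tau)$ for $u$ and choose $y^*$ with $\psi(z,y^*)=\overline\psi(z)$. Then pick $z'\in\varepsilon(y^*+\Z^n)$ with $|z'-z|\le\sqrt n\,\varepsilon$. Using $\Lip\psi$ gives
\begin{align*}
\psi(z',z'/\varepsilon)=\psi(z',y^*)\le\overline\psi(z)+C\varepsilon.
\end{align*}
It remains to bound $\varepsilon m(\tau/\varepsilon,z'/\varepsilon,x/\varepsilon)$ by $\overline m(\tau,z,x)+C\varepsilon$. When $\tau\gtrsim\varepsilon$, shifting the endpoint by $O(\varepsilon)$ changes $\overline m$ by $O(\varepsilon)$, by local Lipschitz continuity of $\overline L$ on bounded velocity sets, and (\ref{eq:conv-rate-metric}) applies. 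When $\tau\lesssim\varepsilon$, I would instead stop at a point $z'$ within $O(\varepsilon)$ of $x$ at cost $O(\varepsilon)$, using a straight segment of bounded speed over a time of order $\varepsilon$. This is compared with $u\ge\min\{\dots,\overline\psi(x)-C\tau\}$, which uses that $\overline\psi$ is Lipschitz. Gluing these cases yields $|u^\varepsilon-u|\le C\varepsilon$.

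For the qualitative locally uniform convergence without Lipschitz $\psi$, I would run the same argument with the moduli of continuity of $\psi$ and $u_0$ on compact sets, replacing $C\varepsilon$ by $o(1)$. Alternatively, I would approximate $\psi$ uniformly on compacts by Lipschitz obstacles and use the monotonicity of the formulas in $\psi$.
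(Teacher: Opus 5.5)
Your architecture matches the paper's proof. You use both representation formulas, the Tran--Yu bound (\ref{eq:conv-rate-metric}) on the cone $|x-z|\le M\tau$, and $\psi\ge\overline\psi$ for the lower bound. For the upper bound you take a point $z'\in\varepsilon(y^*+\Z^n)$ within $\sqrt n\,\varepsilon$ of $z$ and split according to $\tau\gtrsim\varepsilon$ or $\tau\lesssim\varepsilon$. Shifting the endpoint through the local Lipschitz continuity of $\overline L$ instead of Lemma \ref{lem:lip-m} is an equally valid variant.

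\textbf{The gap.} It lies in your first step, the reduction to $|x-z|\le M\tau$. That step is load-bearing, because (\ref{eq:conv-rate-metric}) is only available on that cone. Superlinearity of $L$, $\psi$ bounded below and $u^\varepsilon\le u_0+Ct$ only control the total action. They give $|x-z|^2/\tau\le C$, i.e.\ $|x-z|\lesssim\sqrt\tau$, not $|x-z|\le M\tau$. The claim is in fact false for near-minimizers. At any contact point where $u^\varepsilon(x,t)=\psi(x,x/\varepsilon)$, take $z=x+\sqrt{\delta\tau}\,e$ with $|e|=1$. As $\tau\to0$ these pairs are $\delta$-near-minimizers with speed $\sqrt{\delta/\tau}\to\infty$. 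Comparing with stopping at $x$ inside the $\varepsilon$-problem does not repair this. The map $x\mapsto\psi(x,x/\varepsilon)$ has Lipschitz constant of order $1/\varepsilon$, so it only yields speed bounds of order $1/\varepsilon$. As written, your application of (\ref{eq:conv-rate-metric}) to the $u^\varepsilon$-competitor in the lower bound is therefore unjustified.

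\textbf{How the paper closes it.} It uses a different ingredient: the $\varepsilon$-uniform Lipschitz bound for $u^\varepsilon$ from Proposition \ref{thm:a-priori}, which depends on $H$ and $\lVert Du_0\rVert_\infty$ but not on $\psi$. Combined with an exact minimizing pair (Proposition \ref{thm:lsc-in-theta}) and the dynamic programming identity along it, this gives $|\dot\gamma|\le C_0$ (Proposition \ref{prop:lip-minimizer}). The same holds for $u$.

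\textbf{An alternative fix in the Lipschitz case.} Your hint works if you compare with the effective cost $\overline\psi(x)\ge u(x,t)$ rather than with $\psi(x,x/\varepsilon)$. Let $\ell$ be the Lipschitz constant of $\overline\psi$. For $M$ large, every pair with $|x-z|>M\tau$ satisfies
\begin{align*}
\varepsilon m\Big(\frac{\tau}{\varepsilon},\frac{z}{\varepsilon},\frac{x}{\varepsilon}\Big)+\psi\Big(z,\frac{z}{\varepsilon}\Big)
&\ge \frac{|x-z|^2}{2\tau}-K_0\tau+\overline\psi(z)\\
&\ge \Big(\frac{M}{2}-\frac{K_0}{M}-\ell\Big)|x-z|+\overline\psi(x)\ge\overline\psi(x)\ge u(x,t).
\end{align*}
So such pairs can simply be discarded, and the same works in the $u_0$-branch and in the infimum defining $u$.

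\textbf{What each route buys.} Without Lipschitz $\psi$, this discarding needs $\omega(d)\le\eta+C_\eta d$ with $M=M(\eta)$. The paper's speed bound is independent of the regularity of $\psi$, so locally uniform convergence comes with no extra work.

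\textbf{A smaller point.} In your case $\tau\lesssim\varepsilon$, the straight segment of duration $\sim\varepsilon$ requires $t\gtrsim\varepsilon$. Otherwise use $u^\varepsilon\le u_0+Ct$ and $u\ge u_0-Ct$. Alternatively, as the paper does, use $u^\varepsilon(x,t)\le u^\varepsilon(\widetilde z,t)+\lVert Du^\varepsilon\rVert_\infty|x-\widetilde z|\le\psi(\widetilde z,\widetilde z/\varepsilon)+C\varepsilon$.
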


Furthermore, in Section 3.2,
we show that the Lipschitz continuity assumption on $\psi$ in Theorem \ref{thm:main-thm} is essential for obtaining the convergence rate $O(\varepsilon)$.
More precisely, 
we construct an example of (\ref{OP}) with a non-Lipschitz obstacle function $\psi$, 
for which the convergence rate to the homogenized limit is strictly slower than $O(\varepsilon)$.

In addition, 
in Section 2.1, 
we provide a proof of the unique solvability 
and an a priori Lipschitz estimate 
for viscosity solutions to the initial value problem for Hamilton--Jacobi equations with obstacles, 
under the assumption that the Hamiltonian is coercive.
In Section 2.2, 
we also provide a proof of optimal control representation formulas
to make our argument self-contained.

Finally,
we mention several simplifications
that can be made without loss of generality.
Under assumptions (A\ref{assump:H-peri})--(A\ref{assump:initial}),
as we see in Proposition \ref{thm:a-priori},
(\ref{OP}) admits a unique viscosity solution $u^\varepsilon$ for each $\varepsilon >0$ and
there exsits a constant $C>0$ independent of $\varepsilon$
such that 
\begin{align*}
\lVert u^\varepsilon_t \rVert_{L^\infty(\R^n\times[0,\infty))}
+\lVert Du^\varepsilon \rVert_{L^\infty(\R^n\times [0,\infty))} 
\leq C.
\end{align*}
Therefore, the value of Hamiltonian $H(y,p)$ for $|p|>C$
are irrelevant to analyze (\ref{OP}).
Modifying the value of $H(y,p)$ on $\R^n\times(\R^n\setminus B(0,R))$
for sufficiently large $R>0$ if necessary,
we may assume that there exists a constant $K_0 >0$ such that
\begin{align}\label{assump:quad-growth}
    \frac{1}{2}|p|^2-K_0 \leq H(y,p) \leq \frac{1}{2}|p|^2 + K_0 \quad \text{for all } (y,p)\in\R^n\times\R^n.
\end{align}
By this argument,
we may also assume the Lagrangian $L$ satisfies the same growth condition
\begin{align*}
    \frac{1}{2}|q|^2-K_0 \leq L(y,q) \leq \frac{1}{2}|q|^2 + K_0 \quad \text{for all } (y,q)\in\R^n\times\R^n,
\end{align*}
and thus 
optimal control representation (\ref{eq:value-fcn-intro}) remains valid under growth condition (A\ref{assump:coer}) for $H$, 
without requiring the superlinear growth condition.

\subsection*{Organization of the paper}
In Section 2, 
we summarize the standard arguments of viscosity solution theory for obstacle problems 
and introduce optimal control formulas with stopping time.
Section 3.1 is devoted to a proof of Theorem \ref{thm:main-thm}.
In Section 3.2, 
we provide an example of an obstacle problem 
whose convergence rate is $O(\varepsilon^\alpha)$ for $\alpha\in (0,1)$ but not $O(\varepsilon)$.

\section{Preliminaries}

\subsection{Viscosity solutions to obstacle problems}
In this subsection, we recall standard viscosity solution theory which we need for our obstacle problems.
All of the results presented in this subsection 
are rather standard and can be obtained by suitably adapting the basic framework of viscosity solution theory for Cauchy problems.
Nevertheless, for the sake of completeness 
and to make the argument of the obstacle problems self-contained, 
we provide the proofs.

Our main purpose in this subsection is to prove 
unique solvability and an a priori Lipschitz estimate 
of the following form:
\begin{proposition}\label{thm:a-priori}
    Assume $(\mathrm{A}\ref{assump:H-peri})$--$(\mathrm{A}\ref{assump:initial})$.
    Then, each $($\ref{OP}$)$ and $($\ref{effOP}$)$
    has a unique viscosity solution $u^\varepsilon$ and $u$,
    respectively.
    Moreover, they are Lipschitz continuous 
    and there exists a constant $C>0$ independent of $\varepsilon >0$ such that
    \begin{align*}
        \lVert u^\varepsilon_t \rVert_{L^\infty (\R^n\times[0,\infty))}
        +\lVert Du^\varepsilon\rVert_{L^\infty(\R^n\times[0,\infty))}
        \leq C.
    \end{align*}
\end{proposition}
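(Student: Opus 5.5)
We prove existence and uniqueness by Perron's method combined with a comparison principle. The Lipschitz bounds then come from barrier functions and translation invariance. We treat (\ref{OP}) for fixed $\varepsilon>0$; (\ref{effOP}) is the same argument with $H$ replaced by $\overline{H}$, which is continuous and coercive, and $\psi(x,x/\varepsilon)$ replaced by $\overline{\psi}$. Write $\psi^\varepsilon(x):=\psi(x,x/\varepsilon)$.

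\textbf{Step 1 (comparison).} Let $v$ be a bounded-above USC subsolution and $w$ a bounded-below LSC supersolution of $\max\{u_t+H(x/\varepsilon,Du),\,u-\psi^\varepsilon\}=0$ with $v(\cdot,0)\le w(\cdot,0)$. We show $v\le w$ on $\R^n\times[0,T]$. The first reduction is to Lipschitz subsolutions: by (A\ref{assump:convex}) and coercivity, sup-convolution in $x$ (or Lipschitz bounds for subsolutions of coercive equations) lets us assume $v$ is Lipschitz in $x$. Next we perturb $v$ to $v-\eta t-\eta/(T-t)-\delta\langle x\rangle$ to get a strict subsolution that also handles unbounded space. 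We then double variables with penalization $|x-y|^2/(2\alpha)+|t-s|^2/(2\alpha)$. At a maximum point with $t,s>0$, the subsolution inequality for $v$ gives $v\le\psi^\varepsilon$ at $x$ and the time-derivative inequality. The supersolution gives either $w\ge\psi^\varepsilon$ at $y$ or the PDE inequality. In the first case, continuity of $\psi^\varepsilon$ and $|x-y|\to0$ yield $v-w\le o(1)$. In the second case we subtract the two PDE inequalities and use uniform continuity of $H$ on compact gradient sets, which is available because the test gradients are bounded by the Lipschitz constant of $v$. The case $t=0$ or $s=0$ is handled by the initial data.

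\textbf{Step 2 (barriers and existence).} By (A\ref{assump:initial}), $\underline u(x,t)=u_0(x)-Ct$ with $C\ge\sup\{|H(y,p)|:|p|\le\Lip(u_0)\}$ is a subsolution. Indeed, $\underline u-\psi^\varepsilon\le u_0-\overline{\psi}\le0$ and $\underline u_t+H\le -C+C\le0$ wherever $u_0$ is touched, and by convexity we may test with sup-differentials. Also $\overline u(x,t)=u_0(x)$ is a supersolution: $\overline u_t+H(x/\varepsilon,D\overline u)\ge -C$ is not automatically nonnegative, so instead we take $\overline u=u_0+Ct$. This satisfies $\max\{C+H,\cdot\}\ge0$ at any smooth test point, since $|p|\le\Lip(u_0)$ there in the viscosity sense (more carefully, we use that $u_0$ is Lipschitz, so subgradients are bounded; otherwise we mollify $u_0$ with an $O(\delta)$ error). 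The two barriers agree at $t=0$. Perron's method between $\underline u$ and $\overline u$ gives a solution attaining the initial data, and Step 1 gives uniqueness. The constant $C$ depends only on $\sup_{|p|\le \Lip(u_0)}|H|$, which is independent of $\varepsilon$.

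\textbf{Step 3 (Lipschitz bounds).} For the time derivative: $\underline u\le u^\varepsilon\le\overline u$ gives $|u^\varepsilon(x,h)-u_0(x)|\le Ch$. The equation is autonomous in $t$, so $u^\varepsilon(\cdot,\cdot+h)$ solves the same problem with initial data $u^\varepsilon(\cdot,h)$. Comparison, which for the obstacle problem also holds between solutions with different initial data as $\sup$-contraction, then gives $|u^\varepsilon(x,t+h)-u^\varepsilon(x,t)|\le Ch$, so $\|u^\varepsilon_t\|_\infty\le C$.

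For the space gradient we do not use translation in $x$, since that breaks periodicity at scale $\varepsilon$ and the obstacle. Instead, $u^\varepsilon$ is a subsolution of $u_t+H\le0$, so in the viscosity sense $H(x/\varepsilon,Du^\varepsilon)\le -u^\varepsilon_t\le C$ for all $t>0$. By (A\ref{assump:coer}) this forces $|Du^\varepsilon|\le R$, where $R$ depends only on $C$ and $H$. A subsolution of a coercive equation is Lipschitz with this bound, by the standard result applied on each time slice via the $t$-Lipschitz property.

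The main obstacle is Step 1, specifically making comparison work on unbounded domains without uniform continuity of $H$ in $p$. We would handle this by first proving Lipschitz regularity of subsolutions from coercivity, then localizing with $\delta\langle x\rangle$.
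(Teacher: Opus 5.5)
\textbf{There is a genuine gap in Step 1, and the rest of your argument depends on it.}

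Your comparison proof begins by reducing to a subsolution that is Lipschitz in $x$. Neither of your two justifications works for the evolutionary inequality $u_t+H(x/\varepsilon,Du)\le 0$.

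\emph{Coercivity.} Coercivity bounds $Du$ only where $u_t$ is bounded from below. Without that, a bounded USC subsolution need not even be continuous in $x$. Take $H=\frac12|p|^2$ and let $v$ equal $\max\{-|x|^2/(2(T-t)),-1\}$ for $t<T$, equal $0$ at $(0,T)$, and equal $-1$ at every other point with $t\ge T$. This is a bounded USC subsolution. At $(0,T)$, touching from above forces $\phi_t\le -\frac12|D\phi|^2$. Yet its slice at $t=T$ is discontinuous.

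\emph{Sup-convolution.} Sup-convolution in $x$ does give functions that are Lipschitz in $x$, with constant of order $\delta^{-1/2}$. But they are subsolutions only up to an error of size $\sup\{|H(y_1,p)-H(y_2,p)| : |y_1-y_2|\le C\sqrt{\delta}/\varepsilon,\ |p|\le C/\sqrt{\delta}\}$. Nothing in (A1)--(A3) makes this error vanish: there is no structure condition such as $|H(y_1,p)-H(y_2,p)|\le\omega(|y_1-y_2|(1+|p|))$. For instance, take $H=a(y)|p|^2$ with $a$ merely continuous.

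So you do not have comparison between two merely semicontinuous functions. Step 2 needs exactly that to conclude $v^*\le v_*$ for the Perron envelopes. Step 3 applies sup-contraction to solutions not yet known to be Lipschitz. As written, the argument is circular.

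\textbf{How the paper avoids this.} First, it proves comparison only when one of the two functions is Lipschitz. Then $|x_\delta-y_\delta|\le C\delta$, the test gradients $2(x_\delta-y_\delta)/\delta$ stay bounded, and uniform continuity of $H$ on $\R^n\times B(0,R)$ is enough. Second, it establishes the Lipschitz bound for the Perron envelope $v^*$ \emph{before} any comparison is invoked:
\begin{itemize}
\item The upper bound $(v^*)_t\le C_1$ follows directly from $H\ge -C_1$.
\item For the lower bound, define $\phi^s_-$ to be $u_0(x)-C_0t$ for $t\le s$ and $v^*(x,t-s)-C_0s$ for $t>s$. One checks that $\phi^s_-$ is a subsolution lying between the barriers. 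By maximality of the Perron supremum, $\phi^s_-\le v^*$, which gives $v^*(x,t)-v^*(x,t-s)\ge -C_0s$.
\item Then $H(x/\varepsilon,Dv^*)\le\max\{C_0,C_1\}$ holds in the viscosity sense, and coercivity bounds $Dv^*$.
\end{itemize}
Your time-translation idea in Step 3 is essentially this Claim. The fix is to run it through maximality in the Perron class rather than through comparison between solutions. Once $v^*$ is Lipschitz, the restricted comparison yields $v^*\le v_*$, hence existence, uniqueness, and the $\varepsilon$-independent bound.

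The remaining pieces of your proposal are sound. In particular, $u_0+Ct$ works as the upper Perron barrier in place of the paper's $(u_0+C_0t)\wedge\psi$. Failure of the supersolution test at a point already forces $v_*<\psi^\varepsilon$ there, so the local bump stays below the obstacle. Your coercivity step for $Du^\varepsilon$ is also fine once the time bound is available.
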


We now recall the definition of viscosity solutions to obstacle problems.
\begin{definition}
    We say that 
    \begin{enumerate}[(a)]
        \item the function $w\in \USC (\R^n\times[0,\infty))$
        is a viscosity subsolution to (\ref{OP})
        if $w(\cdot ,0)\leq u_0$ on $\R^n$, and, 
        for any $(x_0,t_0)\in\R^n\times (0,\infty)$ and
        for any $\phi \in C^1(\R^n\times(0,\infty))$ 
        such that $w-\phi$ attains a local maximum at $(x_0,t_0)$, we have
        \begin{align}
            \max \left\{
                \phi_t(x_0,t_0)+H\left(
                    \frac{x_0}{\varepsilon},D\phi (x_0,t_0)
                \right),\ 
                w(x_0,t_0)-\psi\left(
                    x_0,\frac{x_0}{\varepsilon}
                \right)
            \right\}\leq 0;
            \label{def:visc-subsol}
        \end{align}
        \item the function $v\in \LSC (\R^n\times[0,\infty))$
        is a viscosity supersolution to (\ref{OP})
        if $v(\cdot ,0)\geq u_0$ on $\R^n$, and, 
        for any $(x_0,t_0)\in\R^n\times (0,\infty)$ and
        for any $\phi \in C^1(\R^n\times(0,\infty))$ 
        such that $v-\phi$ attains a local minimum at $(x_0,t_0)$, we have
        \begin{align}
            \max \left\{
                \phi_t(x_0,t_0)+H\left(
                    \frac{x_0}{\varepsilon},D\phi (x_0,t_0)
                \right),\ 
                v(x_0,t_0)-\psi\left(
                    x_0,\frac{x_0}{\varepsilon}
                \right)
            \right\}\geq 0;
            \label{def:visc-supsol}
        \end{align}
        \item the function $w\in C (\R^n\times[0,\infty))$
        is a viscosity solution to (\ref{OP})
        if $w$ is both a viscosity sub and supersolution to (\ref{OP}).
    \end{enumerate}
\end{definition}

\begin{remark}
    Inequality (\ref{def:visc-subsol}) is equivalent to the condition that
    \begin{align*}
        \phi_t(x_0,t_0)
        +H\left(
            \frac{x_0}{\varepsilon},D\phi (x_0,t_0)
        \right)\leq 0
        \quad \text{and}\quad 
        w(x_0,t_0)
        \leq \psi\left(
            x_0,\frac{x_0}{\varepsilon}
        \right).
    \end{align*}
    Similarly, (\ref{def:visc-supsol}) is equivalent to the condition that
    \begin{align*}
        \phi_t(x_0,t_0)
        +H\left(
            \frac{x_0}{\varepsilon},D\phi (x_0,t_0)
        \right)\geq 0
        \quad \text{or}\quad 
        v(x_0,t_0)
        \geq \psi\left(
            x_0,\frac{x_0}{\varepsilon}
        \right).
    \end{align*}
    Thus, (\ref{OP}) can be written as
    \begin{align*}
        \left\{
        \begin{aligned}
            &u^\varepsilon_t+H\left(\frac{x}{\varepsilon},Du^\varepsilon\right)\leq 0
            \quad\text{in } \R^n\times (0,\infty),\\
            &u^\varepsilon_t + H\left(\frac{x}{\varepsilon},Du^\varepsilon\right)\geq 0
            \quad\text{in } 
            \left\{ u^\varepsilon <\psi\left(x,\frac{x}{\varepsilon}\right)\right\},\\
            &u^\varepsilon\leq \psi \left(x,\frac{x}{\varepsilon}\right) \quad\text{on } \R^n\times [0,\infty),\quad 
            u^\varepsilon (x,0)=u_0(x) \quad\text{on }\R^n.
        \end{aligned}
        \right.
    \end{align*}
\end{remark}

\begin{remark}
For notational simplicity,
throughout the proofs of the propositions in this section,
we consider only the case $\varepsilon=1$ for (\ref{OP}) 
and write $\psi (x)$ for $\psi (x,x/\varepsilon)$ whenever there is no risk of confusion.
\end{remark}

We start from showing the comparison principle for obstacle problems.
To prove uniqueness, existense and solution-wise Lipschitz estimate, we only need the condition that
\begin{align}
    H\in\BUC (\R^n\times B(0,R)) \quad \text{for every } R>0.
    \label{assump:H-BUC}
\end{align}
If we assume (A\ref{assump:H-peri}) for $H$,
(\ref{assump:H-BUC}) immediately holds.

\begin{proposition}\label{thm:comparison}
    Assume $(\mathrm{A}\ref{assump:H-peri})$, 
    $(\mathrm{A}\ref{assump:psi-peri})$
    and $(\mathrm{A}\ref{assump:initial})$,
    and take $T\in(0,\infty)$ arbitrarily.
    Let $w\in\USC(\R^n\times [0,T))$ 
    and $v\in\LSC(\R^n\times [0,T))$ 
    be a bounded viscosity subsolution and supersolution to $($\ref{OP}$)$, respectively, 
    and suppose that at least one of $w$ and $v$ 
    is Lipschitz continuous on $\R^n \times [0,T)$.
    Then, we have that
    \begin{align*}
        w(x,0) \leq v(x,0)\text{ on } \R^n 
        \quad \text{implies} \quad
        w(x,t) \leq v(x,t) \text{ on } \R^n\times [0,T).
    \end{align*}
\end{proposition}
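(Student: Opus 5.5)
We argue by contradiction with the doubling-of-variables method; the Lipschitz continuity of one of the two functions removes the need for any continuity modulus of $H$ in $p$. By symmetry of the argument, assume $w$ is Lipschitz with constant $K$ (the case where $v$ is Lipschitz is analogous, with the roles of the test functions exchanged). Suppose
\[
\sigma:=\sup_{\R^n\times[0,T)}(w-v)>0.
\]
For small parameters $\alpha,\beta,\delta>0$ and a point $(\xi,\tau)$ with $w(\xi,\tau)-v(\xi,\tau)>\sigma/2$, consider
\[
\Phi(x,y,t,s)=w(x,t)-v(y,s)-\frac{|x-y|^2+|t-s|^2}{2\delta}-\frac{\beta}{T-t}-\alpha\bigl(\langle x-\xi\rangle+\langle y-\xi\rangle\bigr),
\]
where $\langle z\rangle=(1+|z|^2)^{1/2}$. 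Since $w,v$ are bounded, $\Phi$ attains a maximum at some $(\hat x,\hat y,\hat t,\hat s)$, and this maximum is at least $\sigma/4$ for $\alpha,\beta$ small. The penalization $\beta/(T-t)$ keeps $\hat t$ away from $T$.

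\textbf{Step 1 (penalization estimates).} Because $w$ is Lipschitz, comparing $\Phi$ at $(\hat x,\hat y,\hat t,\hat s)$ with its value at $(\hat y,\hat y,\hat s,\hat s)$ gives
\[
\frac{|\hat x-\hat y|^2+|\hat t-\hat s|^2}{2\delta}\le K\bigl(|\hat x-\hat y|+|\hat t-\hat s|\bigr)+\alpha\cdot(\text{Lipschitz term}),
\]
so $|\hat x-\hat y|+|\hat t-\hat s|\le C\delta$. Consequently $p:=(\hat x-\hat y)/\delta$ and $q:=(\hat t-\hat s)/\delta$ are bounded by a constant $R$ independent of $\delta$, and also independent of $\alpha\le1$.

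\textbf{Step 2 (positivity of times).} We show $\hat t,\hat s>0$ for $\delta$ small. If, say, $\hat t=0$, then $\Phi\ge\sigma/4$ gives $w(\hat x,0)-v(\hat y,\hat s)\ge\sigma/4$. Combining $w(\hat x,0)\le u_0(\hat x)$, the Lipschitz continuity of $u_0$, and $|\hat x-\hat y|+\hat s\le C\delta$ yields a contradiction. For the case $\hat s=0$ we use instead the Lipschitz bound on $w$ and $u_0\le v(\cdot,0)$. I expect this to be the most delicate step: only semicontinuity of $v$ at $t=0$ is available, so the estimate must always pass through $w$.

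\textbf{Step 3 (viscosity inequalities and the obstacle).} Now apply the definitions. The map $(x,t)\mapsto\Phi(x,\hat y,t,\hat s)$ shows that $w$ is touched from above at $(\hat x,\hat t)$ with gradient $p+\alpha D\langle\hat x-\xi\rangle$ and time derivative $q+\beta/(T-\hat t)^2$. Similarly, $v$ is touched from below at $(\hat y,\hat s)$ with gradient $p-\alpha D\langle\hat y-\xi\rangle$ and time derivative $q$.

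\emph{Supersolution alternative.} The supersolution property gives either $v(\hat y,\hat s)\ge\psi(\hat y)$ or the PDE inequality. In the first case, the subsolution property gives $w(\hat x,\hat t)\le\psi(\hat x)$, so
\[
\frac{\sigma}{4}\le w(\hat x,\hat t)-v(\hat y,\hat s)\le\psi(\hat x)-\psi(\hat y),
\]
which tends to $0$ as $\delta\to0$ by uniform continuity of $\psi(x,x/\varepsilon)$ on the relevant set. Indeed, boundedness of the relevant points modulo the $\alpha$-localization, or compactness after extracting subsequences with $\alpha$ fixed, is enough, since $|\hat x-\hat y|\to0$. This is a contradiction.

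\emph{PDE case.} Otherwise, subtracting the two PDE inequalities gives
\[
\frac{\beta}{T^2}\le H\bigl(\hat y,p-\alpha D\langle\cdot\rangle\bigr)-H\bigl(\hat x,p+\alpha D\langle\cdot\rangle\bigr).
\]
Since $|D\langle\cdot\rangle|\le1$ and $|p|\le R$, \eqref{assump:H-BUC} gives a modulus $\omega_R$ with right-hand side at most $\omega_R(|\hat x-\hat y|+2\alpha)$. Letting $\delta\to0$ and then $\alpha\to0$, with $\beta$ fixed, gives a contradiction. Hence $w\le v$ on $\R^n\times[0,T)$.
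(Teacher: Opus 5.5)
Your overall scheme is the paper's. You use doubling of variables, and the Lipschitz continuity of $w$ keeps $p=(\hat x-\hat y)/\delta$ bounded, so only uniform continuity of $H$ on $\R^n\times B(0,R)$ is needed. The obstacle/PDE dichotomy comes from the supersolution inequality, and strictness from a time penalization. Your $\alpha\langle x-\xi\rangle$ and $\beta/(T-t)$ replace the paper's $\theta(|x|^2+|y|^2)$ and $\lambda(t+s)$, with the small advantage that the gradient error is at most $2\alpha$ directly.

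The genuine gap is in Step 2, case $\hat t=0$. There you only know $\sigma/4\le w(\hat x,0)-v(\hat y,\hat s)\le u_0(\hat x)-v(\hat y,\hat s)$ with $\hat s>0$ small. None of the ingredients you list ($w(\cdot,0)\le u_0$, Lipschitz continuity of $u_0$, $|\hat x-\hat y|+\hat s\le C\delta$) bounds $v(\hat y,\hat s)$ from below. Nor can this be pushed through $w$ quantitatively: for $\hat s>0$ the only available link between $v(\hat y,\hat s)$ and the initial data is the lower semicontinuity of $v$ at $t=0$. What is missing is a compactness and semicontinuity argument. Set $C_*:=\sup|w|+\sup|v|$. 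Then $\Phi(\hat x,\hat y,\hat t,\hat s)>0$ gives $|\hat x-\xi|,|\hat y-\xi|\le C_*/\alpha$. So for fixed $\alpha,\beta$ you may extract $\delta\to0$ with $\hat x,\hat y\to y_0$ and $\hat t,\hat s\to 0$. Lower semicontinuity of $v$ then gives $\limsup\,(u_0(\hat x)-v(\hat y,\hat s))\le u_0(y_0)-v(y_0,0)\le 0$. This is precisely the paper's argument. It treats $\hat t=0$ and $\hat s=0$ at once by writing $w(\hat x,\hat t)-v(\hat y,\hat s)=[w(\hat x,\hat t)-w(\hat y,\hat s)]+(w-v)(\hat y,\hat s)$ and using upper semicontinuity of $w-v$ at the limit point $(y_0,0)$. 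Your quantitative treatment of the case $\hat s=0$ is fine.

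There are also three smaller repairs.
\begin{enumerate}
\item In Step 1, comparing with $(\hat y,\hat y,\hat s,\hat s)$ also produces the term $\beta\bigl(\frac{1}{T-\hat s}-\frac{1}{T-\hat t}\bigr)$. It is positive when $\hat s>\hat t$, and it is $O(|\hat t-\hat s|)$ only once $\hat s$ is known to stay away from $T$. It is cleaner to vary only $x$, comparing with $(\hat y,\hat y,\hat t,\hat s)$ as the paper does, which gives $|p|\le 2(K+\alpha)$. You never need $q$ bounded, since it cancels when the two viscosity inequalities are subtracted. Also, $|\hat t-\hat s|\to 0$ already follows from $|\hat t-\hat s|^2\le 2C_*\delta$.
\item Your $\Phi$ has no barrier as $s\uparrow T$, so the existence of a maximum needs an argument. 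The bounds $T-\hat t\ge\beta/C_*$ and $|\hat t-\hat s|^2\le 2C_*\delta$ keep $\hat s\le T-\beta/(2C_*)$ for $\delta$ small. Alternatively, just add $\beta/(T-s)$, as the paper does.
\item Your opening remark is inaccurate. You do use continuity of $H$ in $p$ on $B(0,R+1)$ to absorb the $2\alpha$ perturbation. What the Lipschitz bound buys is that $p$ stays bounded, so no structure condition on $H$ uniform in large $|p|$ is required.
\end{enumerate}
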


\begin{proof}
    Let $w\in\USC(\R^n\times [0,T))$ 
    and $v\in\LSC(\R^n\times [0,T))$ 
    be a bounded viscosity sub and supersolution to (\ref{OP}), respectively, with $w(\cdot,0)\leq v(\cdot,0)$.
    Without loss of generality, 
    we can assume $w\in\Lip (\R^n\times[0,T))$.
    
    For positive parameters $\delta, \theta, \lambda>0$, 
    we consider the auxiliary function $\Phi:\R^{2n}\times[0,T)^2\to\R$ defined by
    \begin{align*}
        &\Phi(x,y,t,s)=\Phi^{\delta,\theta,\lambda}(x,y,t,s)\\
        &= w(x,t)-v(y,s)-\frac{|x-y|^2+|t-s|^2}{\delta}
        -\theta (|x|^2+|y|^2)-\lambda(t+s)
        -\frac{\delta}{T-t}-\frac{\delta}{T-s}.
    \end{align*}
    We assume by contradiction that
    $M:= \sup_{\R^n\times[0,T)} (w-v) >0$.
    Then, there exists $(x_1,t_1)\in\R^n\times[0,T)$
    such that $(w-v)(x_1,t_1)\geq M/2$ and it follows that
    \begin{align}
        \Phi^{\delta_1,\theta,\lambda}(x_1,x_1,t_1,t_1)
        = w(x_1,t_1)-v(x_1,t_1) -2\theta|x_1|^2 
        - 2\lambda t_1 - \frac{2\delta_1}{T-t_1}
        > \frac{M}{4}
        \label{eq:2.CP-1}
    \end{align}
    for any sufficiently small $\delta_1,\theta,\lambda >0$.
    Here, we fix $\delta_1,\theta,\lambda$ satisfying
    $(\ref{eq:2.CP-1})$ and 
    let $(x_\delta,y_\delta,t_\delta, s_\delta)$ 
    be a maximum point of $\Phi^{\delta,\theta,\lambda}$
    for each $\delta\in (0,\delta_1)$.
    By usual computation for auxiliary functions of
    doubling variable methods, we have
    \begin{align}
        \frac{|x_\delta - y_\delta|^2 + |t_\delta -s_\delta|^2}{\delta}\leq C
        \quad \text{and} \quad
        |x_\delta| + |y_\delta| \leq \frac{C}{\sqrt{\theta}}
        \quad \text{for every } \delta\in(0,\delta_1),
        \label{eq:2.CP-2}
    \end{align}
    for a constant $C>0$ independent of $\delta,\theta,\lambda$.
    Moreover, by Lipschitz continuity of $w$ 
    and the inequality 
    $\Phi(y_\delta,y_\delta,t_\delta,s_\delta)\leq \Phi(x_\delta,y_\delta,t_\delta,s_\delta)$,
    we obtain
    \begin{align}
        \frac{|x_\delta-y_\delta|^2}{\delta}
        &\leq w(x_\delta,t_\delta)-w(y_\delta,t_\delta)
        +\theta(|y_\delta|^2 - |x_\delta|^2)\notag \\
        &\leq \lVert Dw \rVert_{L^\infty}\cdot |x_\delta-y_\delta| + \theta (|x_\delta|+|y_\delta|)|x_\delta-y_\delta|.\label{eq:2.CP-2.5}
    \end{align}
    Combining (\ref{eq:2.CP-2}) and (\ref{eq:2.CP-2.5}),
    we have 
    \begin{align}
        |x_\delta-y_\delta|\leq C \delta
        \quad \text{for every } \delta\in(0,\delta_1) .\label{eq:2.CP-3}
    \end{align}
    
    To use the definition of viscosity sub and supersolutions,
    we show that neither $t_\delta$ nor $s_\delta$
    can be $=0$ for each sufficiently small $\delta>0$.
    Since $(x_\delta,y_\delta,t_\delta,s_\delta)$ is bounded due to (\ref{eq:2.CP-2}),
    taking a subsequense if necessary,
    we can assume $(x_\delta,y_\delta,t_\delta,s_\delta)\to (x_0,y_0,t_0,s_0)$ as $\delta\to+ 0$ for some $(x_0,y_0,t_0,s_0)\in\R^{2n}\times[0,T]^2$.
    Here, if $s_0=0$,
    then 
    \begin{align*}
        0&<\frac{M}{4}\leq \Phi(x_\delta,y_\delta,t_\delta,s_\delta)
        \leq w(x_\delta,t_\delta) - w(y_\delta,s_\delta)
        +w(y_\delta,s_\delta)-v(y_\delta,s_\delta),
    \end{align*}
    and by taking $\limsup_{\delta\to +0}$, we have
    \begin{align*}
        \frac{M}{4}\leq w(y_0,0)-v(y_0,0)\leq 0.
    \end{align*}
    This is a contradiction, 
    and it follows that there exists $\mu>0$ with $s_\delta\geq \mu$ for all $\delta\in(0,\delta_1)$.
    For this $\mu$, 
    it also follows that $t_\delta \geq \mu/2$ 
    for every small $\delta$ since $|t_\delta -s_\delta|\to 0$ as $\delta\to 0$.

    We noting that $(x,t)\mapsto \Phi(x,y_\delta,t,s_\delta)$
    takes its maximum at $(x_\delta,t_\delta)\in\R^n\times (0,T)$,
    viscosity subsolution test for $w$ implies that
    \begin{align}
        \frac{2(t_\delta-s_\delta)}{\delta}+\lambda +\frac{\delta}{(T-t_\delta)^2}
        +H\left( x_\delta, \frac{2(x_\delta -y_\delta)}{\delta}+2\theta x_\delta \right)\leq 0
        \quad \text{and}\quad w(x_\delta , t_\delta)\leq \psi(x_\delta).\label{eq:2.CP-4}
    \end{align}
    We also noting that $(y,s)\mapsto -\Phi(x_\delta,y,t_\delta,s)$ takes its minimum at $(y_\delta,s_\delta)\in\R^n\times(0,T)$,
    vicosity supersolution test for $v$ implies that
    \begin{align}
        \frac{2(t_\delta-s_\delta)}{\delta}-\lambda -\frac{\delta}{(T-s_\delta)^2}
        +H\left( y_\delta, \frac{2(x_\delta -y_\delta)}{\delta}-2\theta y_\delta \right)\geq 0
        \quad \text{or}\quad v(y_\delta , s_\delta)\geq \psi(y_\delta).\label{eq:2.CP-5}
    \end{align}
    We need to consider two cases regarding to (\ref{eq:2.CP-5}).
    First, 
    suppose that for some $\theta>0$ with (\ref{eq:2.CP-1}), 
    there exists a subsequence $\{\delta_k\}_{k=1}^\infty$ with $\delta_k\to 0$
    such that $v(y_{\delta_k},t_{\delta_k})\geq \psi(y_{\delta_k})$ for all $k$. 
    Then, we have
    \begin{align*}
        v(y_{\delta_k},s_{\delta_k})\geq \psi(x_{\delta_k})+\psi(y_{\delta_k})-\psi(x_{\delta_k})
        \geq w(x_{\delta_k},t_{\delta_k}) - \omega_\psi (|x_{\delta_k}-y_{\delta_k}|),
    \end{align*}
    where $\omega_\psi$ is a modulus of continuity of $\psi$
    on a ball $B(0,C/\sqrt{\theta})$ determined by (\ref{eq:2.CP-2}).
    Thus, we have
    \begin{align*}
        \frac{M}{4}\leq \limsup_{k\to\infty}\Phi(x_{\delta_k},y_{\delta_k},t_{\delta_k},s_{\delta_k})
        \leq \lim_{k\to\infty}\omega_\psi (|x_{\delta_k}-y_{\delta_k}|) =0,
    \end{align*}
    which is a contradiction.
    Otherwise, if for every $\theta$ with (\ref{eq:2.CP-1})$, v(y_\delta,s_\delta)< \psi(y_\delta)$ holds
    for any sufficiently small $\delta$,
    by taking difference between the first inequalities 
    of (\ref{eq:2.CP-4}) and (\ref{eq:2.CP-5}),
    we have 
    \begin{align*}
        0< 2\lambda 
        < 2\lambda + \frac{\delta}{(T-t_\delta)^2}+\frac{\delta}{(T-s_\delta)^2}
        \leq \omega_H (|x_\delta - y_\delta|+2\theta(|x_\delta - y_\delta|)).
    \end{align*}
    Here, $\omega_H$ is a modulus of continuity of $H$ 
    on $\R^n\times B(0,R)$ for some $R>0$ independent of $\theta$ and $\delta$,
    and we can obtain the existence of such an $\omega_H$ by (\ref{eq:2.CP-2}), (\ref{eq:2.CP-3}) and condition (\ref{assump:H-BUC}).
    Therefore, we obtain
    \begin{align*}
        2\lambda 
        \leq \limsup_{\theta\to +0}
        \limsup_{\delta \to +0} 
        \omega_H (
            |x_\delta-y_\delta|
            +2\theta(|x_\delta|+|y_\delta|)
        )
        \leq \limsup_{\theta\to +0}
        \omega_H(2C\sqrt{\theta})
        =0,
    \end{align*}
    which is also a contradiction.
\end{proof}

Next, we provide a stability result and Perron's method for our obstacle problems.

\begin{proposition}\label{thm:stability}
    Assume $(\mathrm{A}\ref{assump:H-peri})$,
    $(\mathrm{A}\ref{assump:psi-peri})$
    and $(\mathrm{A}\ref{assump:initial})$.
    Let $\{w^i\}_{i\in I}\subset \USC (\R^n\times[0,\infty))$ be a family of viscosity subsolutions to $($\ref{OP}$)$ and set $w(x,t)=\sup_{i\in I} w^i(x,t)$. 
    Then, $w^*$ is a viscosity subsolution to $($\ref{OP}$)$ if $w^*$ is finite at each point.
\end{proposition}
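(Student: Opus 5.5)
We follow the standard proof that the upper semicontinuous envelope of a supremum of subsolutions is again a subsolution, and check the obstacle constraint and the initial condition separately.

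\emph{Initial condition.} For each $i$ we have $w^i(\cdot,0)\le u_0$, so $w(\cdot,0)\le u_0$. Continuity of $u_0$ alone does not immediately give $w^*(x,0)\le u_0(x)$, since $w^*(x,0)$ involves values $w(y,t)$ with $t>0$. We therefore read the initial condition for $w^*$ in the sense used for semicontinuous subsolutions, or we control $w^*(\cdot,0)$ through the obstacle. Since (A\ref{assump:initial}) only gives $u_0\le\overline{\psi}$, the obstacle is not enough by itself. If the family is, for instance, equi-Lipschitz in $t$ near $t=0$, then $w^*(\cdot,0)\le u_0$ follows. This is the step we regard as delicate, and in the paper it is presumably handled either by such an additional uniformity (as in Perron's method, where one works between barriers $u_0-Ct\le w\le u_0+Ct$) or by the weak (relaxed) sense of the initial condition. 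We would state this explicitly.

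\emph{Obstacle part.} Fix $(x_0,t_0)$ with $t_0>0$. Each $w^i$ is a subsolution, so by the Remark following the definition, $w^i(x,t)\le\psi(x,x/\varepsilon)$ at every point of $\R^n\times(0,\infty)$. (Test with a smooth function touching $w^i$ from above; such functions exist at a dense set of points, and the inequality extends to all points by upper semicontinuity of $w^i$ and continuity of $\psi$.) Taking the supremum gives $w\le\psi(\cdot,\cdot/\varepsilon)$. Continuity of $\psi$ then gives $w^*(x_0,t_0)\le\psi(x_0,x_0/\varepsilon)$.

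\emph{Equation part.} Let $\phi\in C^1$ be such that $w^*-\phi$ has a local maximum at $(x_0,t_0)$. Replacing $\phi$ by $\phi+|x-x_0|^2+|t-t_0|^2$ makes the maximum strict on a closed ball $\overline{B}_r$, and leaves $D\phi$ and $\phi_t$ unchanged at $(x_0,t_0)$. By definition of $w^*$, we can choose points $(y_k,s_k)\to(x_0,t_0)$ and indices $i_k$ such that $w^{i_k}(y_k,s_k)\to w^*(x_0,t_0)$. Let $(x_k,t_k)$ be a maximum point of the upper semicontinuous function $w^{i_k}-\phi$ on $\overline{B}_r$. Using $w^{i_k}\le w^*$ and strictness of the maximum, the standard argument yields $(x_k,t_k)\to(x_0,t_0)$ and $w^{i_k}(x_k,t_k)\to w^*(x_0,t_0)$. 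For large $k$ the point $(x_k,t_k)$ is interior, so the subsolution property of $w^{i_k}$ gives
\[
\phi_t(x_k,t_k)+H\left(\frac{x_k}{\varepsilon},D\phi(x_k,t_k)\right)\le0 .
\]
Letting $k\to\infty$ and using continuity of $H$ and of $\phi_t$, $D\phi$, we obtain the desired inequality at $(x_0,t_0)$.

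Combining the obstacle part and the equation part yields \eqref{def:visc-subsol} for $w^*$.
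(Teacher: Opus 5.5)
Your equation part is the paper's argument, and a cleaner version of it, since you track $w^{i_k}(x_k,t_k)\to w^*(x_0,t_0)$ rather than values of $w^{i_k}$ at $(x_0,t_0)$. The step that does not work as written is the obstacle part. From ``$w^i\le\psi$ on a dense set of touching points'' you cannot pass to all points by upper semicontinuity. Upper semicontinuity gives $\limsup_k w^i(z_k)\le w^i(z)$, which bounds $w^i(z)$ from below, not above. For instance, the indicator $\mathbf{1}_{\{z_0\}}$ is $\le 1/2$ on a dense set but not at $z_0$. What you need are touching points along which $w^i$ does not drop. At $(\bar x,\bar t)$ with $\bar t>0$, let $(x_k,t_k)$ maximize $w^i(x,t)-k(|x-\bar x|^2+|t-\bar t|^2)$ over a small closed ball around $(\bar x,\bar t)$. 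Then $(x_k,t_k)\to(\bar x,\bar t)$, these are interior local maxima for large $k$, and $w^i(\bar x,\bar t)\le w^i(x_k,t_k)\le\psi(x_k,x_k/\varepsilon)$; letting $k\to\infty$ gives the bound. Simpler still, and essentially what the paper does, you can drop the global bound altogether. The subsolution inequality for $w^{i_k}$ at the points $(x_k,t_k)$ of your equation part already contains $w^{i_k}(x_k,t_k)\le\psi(x_k,x_k/\varepsilon)$, and letting $k\to\infty$ gives $w^*(x_0,t_0)\le\psi(x_0,x_0/\varepsilon)$ directly.

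On the initial condition your caution is justified, and it is not a shortcoming relative to the paper. The paper's proof only checks the inequality at interior points $t_0>0$ and never addresses $w^*(\cdot,0)\le u_0$. You do not need equi-Lipschitz bounds or a relaxed notion, however. In the only place the proposition is used (Perron's method), every member of $\mathcal{S}$ lies below the continuous barrier $\phi_+=(u_0+C_0t)\wedge\psi$. Hence $v^*\le\phi_+$ and $v^*(\cdot,0)\le\phi_+(\cdot,0)=u_0$. More intrinsically, suppose $H\ge -C_1$ (true under (A2), though not implied by (A1) alone). Then each $w^i$ satisfies $w^i_t\le C_1$ in the viscosity sense. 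Maximizing $w^i(x,t)-(C_1+\eta)t-|x-\bar x|^2/\delta-\kappa/(T-t)$ over $\overline{B}(\bar x,1)\times[0,T)$, any maximum with $t>0$ contradicts the test, so the maximum sits at $t=0$. Letting $\delta,\eta,\kappa\to0$ gives $w^i(\bar x,\bar t)\le u_0(\bar x)+C_1\bar t$. Then $u_0+C_1t$ is a continuous majorant of $w$, hence of $w^*$, and $w^*(\cdot,0)\le u_0$. Under only the hypotheses listed in the statement, neither your proposal nor the paper establishes this part.
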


Here, 
for a function $f:\Omega(\subset\R^n\times [0,\infty))\to [-\infty,+\infty]$, 
$f^*$ and $f_*$ denotes upper and lower semicontinuous envelope of $f$, that is, the function defined by
\begin{align*}
    f^*(z):=\lim_{r\to+0} \sup_{
        \substack{
            \widetilde{z}\in\Omega\\
            |\widetilde{z}-z|\leq r
        }
    } f(\widetilde{z})
    \quad \text{and} \quad
    f^*(z):=\lim_{r\to+0} \inf_{
        \substack{
            \widetilde{z}\in\Omega\\
            |\widetilde{z}-z|\leq r
        }
    } f(\widetilde{z}),
    \quad \text{respectively.}
\end{align*}

\begin{proof}
    Assume $w^*-\phi$ takes its strict maximum at $(x_0,t_0)\in \R^n\times (0,\infty)$ for $\phi\in C^1(\R^n\times (0,\infty))$.
    By the standard argument for upper semicontinuous functions, we can find sequences 
    $\{w^{i_k}\}_{k=1}^\infty$ 
    and $\{(x_k,t_k)\}_{k=1}^\infty$
    satisfying
    \begin{align*}
        \left\{
        \begin{aligned}
        &w^{i_k}-\phi \text{ takes its local maximum at } (x_k,t_k),\\
        &(x_k,t_k)\to (x_0,t_0) \quad
        \text{and}\quad w^{i_k}(x_0,t_0)\uparrow w(x_0,t_0)
        \quad \text{as}\quad k\to\infty.
        \end{aligned}
        \right.
    \end{align*}
    Then, we have
    \begin{align*}
        \phi_t(x_0,t_0) + H(x_0,D\phi(x_0,t_0))
        =\lim_{k\to\infty} \{\phi_t(x_k,y_k)+ H(x_k,\phi(x_k,t_k))\}\leq 0
    \end{align*}
    and
    \begin{align*}
        w(x_0,t_0)=\lim_{k\to\infty}w^{i_k}(x_0,t_0)
        \leq \limsup_{k\to\infty}w^{i_k}(x_k,t_k)
        \leq \lim_{k\to\infty}\psi (x_k) =\psi(x_0).
    \end{align*}
    In particular, 
    since $\psi$ is continuous 
    and $w^*$ is the smallest upper semicontinuous majorant of $w$,
    the inequality $w^*(x_0,t_0)\leq \psi (x_0)$ holds.
\end{proof}

\begin{proposition}\label{thm:Perron}
    Assume $(\mathrm{A}\ref{assump:H-peri})$, $(\mathrm{A}\ref{assump:coer})$, 
    $(\mathrm{A}\ref{assump:psi-peri})$ and $(\mathrm{A}\ref{assump:initial})$.
    Set
    \begin{align*}
        C_0 := \sup\left\{
            |H(y,p)|
            \;:\; y\in\R^n,\ 
            |p|\leq \lVert Du_0\rVert_{L^\infty(\R^n)}
        \right\},
    \end{align*}
    and
    \begin{align*}
        \phi_-(x,t) := u_0(x)-C_0t,
        \quad 
        \phi_+(x,t) := (u_0(x) +C_0t) \wedge \psi(x),
    \end{align*}
    for $(x,t)\in\R^n\times [0,\infty)$.
    Let $\mathcal{S}$ be the class of
    viscosity subsolutions to $($\ref{OP}$)$
    determined by $w\in\mathcal{S}$ if and only if
    \begin{align*}
        w\in \USC(\R^n\times[0,\infty)),\ w
        \text{ is a viscosity subsolution to }
        ( \text{\ref{OP}} ),
        \text{ and }
        \phi_- \leq w \leq \phi_+,
    \end{align*}
    and set $v(x,t)=\sup_{w\in\mathcal{S}}w(x,t)$.
    Then, $v^*\in\USC(\R^n\times[0,\infty))$ and $v_*\in\LSC(\R^n\times[0,\infty))$ is a viscosity subsolution and supersolution to $($\ref{OP}$)$,
    respectively.
\end{proposition}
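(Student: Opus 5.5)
We follow the standard Perron method, adapted to the obstacle constraint. First we check that $\phi_\pm$ are admissible barriers. Then we obtain the subsolution property of $v^*$ from the stability result (Proposition \ref{thm:stability}). Finally, we prove the supersolution property of $v_*$ by contradiction, using the usual bump construction.

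\emph{Step 1: barriers.} We show $\phi_-\in\mathcal{S}$, so that $\mathcal{S}\neq\emptyset$. The function $\phi_-$ is Lipschitz. At any $C^1$ test point we have $D\phi\in D^+u_0$, hence $|D\phi|\le\lVert Du_0\rVert_{L^\infty}$ and $\phi_t=-C_0$. It follows that $\phi_t+H\le -C_0+C_0=0$. Moreover $\phi_-\le u_0\le\overline{\psi}\le\psi$ by (A\ref{assump:initial}), and $\phi_-\le\phi_+$. Since every $w\in\mathcal{S}$ satisfies $\phi_-\le w\le\phi_+$, we get $\phi_-\le v_*\le v^*\le\phi_+^*=\phi_+$, the last equality because $\phi_+$ is continuous. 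In particular $v^*$ is finite. At $t=0$ both barriers equal $u_0$ (note $u_0\wedge\psi=u_0$), so $v^*(\cdot,0)=v_*(\cdot,0)=u_0$ and the initial conditions hold. Proposition \ref{thm:stability}, applied to the family $\mathcal{S}$, then gives that $v^*$ is a viscosity subsolution.

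\emph{Step 2: supersolution property.} Suppose it fails. Then there are $(x_0,t_0)$ with $t_0>0$ and $\phi\in C^1$ such that $v_*-\phi$ has a strict local minimum $0$ at $(x_0,t_0)$, while
\[
\phi_t+H(x_0,D\phi)<0\quad\text{and}\quad v_*(x_0,t_0)<\psi(x_0).
\]
We first claim that $v_*(x_0,t_0)<\phi_+(x_0,t_0)$. The obstacle part of $\phi_+$ is handled by the second inequality above. For the other part, suppose $v_*(x_0,t_0)=u_0(x_0)+C_0t_0$. One checks that $(x,t)\mapsto u_0(x)+C_0t$ is a viscosity supersolution of the pure equation: at a minimum test point $D\phi\in D^-u_0$, so $|D\phi|\le \lVert Du_0\rVert_{L^\infty}$ and $\phi_t+H\ge C_0-C_0=0$. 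Since $\phi$ touches $v_*\le u_0+C_0t$ from below at a point where they coincide, $\phi$ is also a lower test function for $u_0+C_0t$ at $(x_0,t_0)$. This contradicts $\phi_t+H<0$, so the claim holds.

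By continuity of $\phi$, $H$ and $\psi$, we then choose $r,\delta>0$ such that on $B_r:=B((x_0,t_0),r)$ we have
\[
\phi_t+H(x,D\phi)<0,\qquad \phi+\delta<\psi,\qquad \phi+\delta\le\phi_+ .
\]
By strictness of the minimum we may also arrange $v_*\ge\phi+2\delta$ on $\overline{B_r}\setminus B_{r/2}$, shrinking $\delta$ if needed. Define $W=\max\{v^*,\phi+\delta\}$ on $B_r$ and $W=v^*$ outside. The gluing is consistent because $\phi+\delta<v_*\le v^*$ near $\partial B_r$. Then $W$ is a subsolution, being a local maximum of two subsolutions, and the obstacle condition holds because $\phi+\delta<\psi$ on $B_r$. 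It also satisfies $\phi_-\le W\le\phi_+$, so $W\in\mathcal{S}$. Here one applies the definition to $v^*$ itself, which belongs to $\mathcal{S}$: it is a subsolution by Step 1 and lies between $\phi_-$ and $\phi_+$. Finally, take points $(x_k,t_k)\to(x_0,t_0)$ with $v(x_k,t_k)\to v_*(x_0,t_0)=\phi(x_0,t_0)$. For large $k$ we get $W(x_k,t_k)\ge\phi(x_k,t_k)+\delta>v(x_k,t_k)$, contradicting the maximality of $v$.

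\emph{Main obstacle.} The delicate point is the claim in Step 2 that $v_*(x_0,t_0)<\phi_+(x_0,t_0)$. The obstacle part is immediate, but the part involving $u_0+C_0t$ needs that function to be a supersolution of the equation. That is exactly why $C_0$ is defined as the supremum of $|H|$ over $|p|\le\lVert Du_0\rVert_{L^\infty}$, which bounds $|D\phi|$ at the test points in both Step 1 and Step 2. The remaining steps are routine once this is in place.
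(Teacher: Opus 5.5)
Your argument is correct and is essentially the paper's Perron proof: stability gives the subsolution property of $v^*$, you split cases according to whether $v_*(x_0,t_0)$ equals $\phi_+(x_0,t_0)$ (checking the $u_0+C_0t$ branch directly rather than citing that $\phi_+$ is a supersolution), and you use the local bump $\max\{v^*,\phi+\delta\}$. Your closing step, via points $(x_k,t_k)$ with $v(x_k,t_k)\to v_*(x_0,t_0)$, is more careful than the paper's assertion $\widetilde{v}(x_0,t_0)=\phi(x_0,t_0)+\delta>v(x_0,t_0)$, which tacitly assumes $v(x_0,t_0)$ is close to $v_*(x_0,t_0)$; you also verify $\mathcal{S}\neq\emptyset$ and the initial conditions for $v^*$ and $v_*$, which the paper leaves implicit.
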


\begin{proof}
    By Proposition \ref{thm:stability},
    $v^*$ is a viscosity subsolution to (\ref{OP}).
    To show that $v_*$ is a viscosity supersolution,
    assume $v_*-\phi$ takes 
    its strict minimum $(v_*-\phi)(x_0,t_0)=0$ at $(x_0,t_0)$
    for $\phi\in C^1(\R^n\times(0,\infty))$.
    
    If $v_*(x_0,t_0)=\phi_+(x_0,t_0)$,
    we have 
    \begin{align*}
        \max\{
            \phi_t(x_0,t_0)+H(x_0,D\phi(x_0,t_0)),\ 
            v_*(x_0,t_0)-\psi(x_0)
        \}\geq 0
    \end{align*}
    since $\phi_+$ is a viscosity supersolution 
    and $\phi_+-\phi$ also takes its minimum at $(x_0,t_0)$.
    
    Otherwise, if $v_*(x_0,t_0)<\phi_+(x_0,t_0)$,
    since $v_*(x_0,t_0)<\psi(x_0)$ by the construction of $\phi_+$,
    we need to show that $\phi_t (x_0,t_0) + H(x_0,D\phi(x_0,t_0))\geq 0$.
    Assume, by contradiction,
    that
    $\phi_t(x_0,t_0) + H(x_0,D\phi(x_0,t_0))<0$.
    We can take $r,\delta>0$ such that
    \begin{align*}
        \left\{
        \begin{aligned}
            &\phi_t+ H(x,D\phi) <-\delta
            && \text{in } B(x_0,r)\times (t_0 -r,t_0 +r),\\
            &v^*-\phi \ (\geq v_*-\phi) >\delta 
            &&\text{on } \partial(B(x_0,r)\times (t_0-r , t_0+r)),\\
            &\phi\ (\leq v_*\leq v^*)\leq \phi_+-\delta
            &&\text{in } B(x_0,r)\times (t_0-r,t_0+r),
        \end{aligned}
        \right.
    \end{align*}
    and we set 
    \begin{align*}
        \widetilde{v}(x,t)=\left\{
        \begin{aligned}
            & v^*(x,t) \vee (\phi(x,t)+\delta) 
            && \text{if }(x,t)\in B(x_0,r)\times (t_0-r,t_0+r),\\
            & v^*(x,t)
            &&\text{if } (x,t) \not\in B(x_0,r)\times (t_0-r,t_0+r).
        \end{aligned}
        \right.
    \end{align*}
    Then, by Proposition \ref{thm:stability} 
    and the inequality
    $\phi + \delta \leq \phi_+ \leq\psi(x)$,
    we obtain that
    $\widetilde{v}$ is a viscosity subsolution of (\ref{OP})
    with $\phi_-\leq\widetilde{v}\leq \phi_+$.
    This is a contradiction since $\widetilde{v}(x_0,t_0)=\phi(x_0,t_0)+\delta>v(x_0,t_0)$.
\end{proof}

\begin{proof}[Proof of Proposition \ref{thm:a-priori}]
Let $\mathcal{S}$ and $v$ be as in Proposition \ref{thm:Perron}.
First, we show that $v^*$ is Lipschitz continuous.

Since $H$ is coercive, 
there exists a constant $C_1>0$ such that 
$H(y,p)\geq -C_1$ for all $(y,p)$.
This implies 
\begin{align}
    (v^*)_t - C_1 \leq (v^*)_t+H(x,Dv^*)\leq 0
    \quad\text{in } \R^n\times (0,\infty),\label{eq:apriori-1}
\end{align}
in the sense of viscosity solutions.
To obtain the estimate of $(v^*)_t$ from below,
we consider the function
\begin{align*}
        \phi_-^s (x,t):=\left\{
        \begin{aligned}
            &u_0(x)-C_0t &&\text{if } t\leq s,\\
            &v^*(x,t-s) - C_0 s &&\text{if } t>s,
        \end{aligned}
        \right.
\end{align*}
for each $s>0$.
We claim:
\begin{customdefinition}{Claim}
    For any $s>0$, it follows that $\phi_-^s\in\mathcal{S}$.
\end{customdefinition}
\begin{subproof}[Proof of Claim.]
    For the subsolution test for $\phi_-^s$,
    assume that $\phi_-^s - \phi $ takes maximum at $(x_0,t_0)$ for $\phi\in C^1 (\R^n\times (0,\infty))$.
    If $t_0\neq s$, we directly have
    \begin{align*}
    \max\{\phi_t(x_0,t_0)+H(x_0,D\phi(x_0,t_0)),\ \phi^s_-(x_0,t_0)-\psi(x_0)\}\leq 0.
    \end{align*}
    Otherwise, if $t_0=s$, then for each small $h>0$,
    we have
    \begin{align}
        \phi(x_0,s) - \phi(x_0,s-h)
        &\leq \phi^s_-(x_0,s)-\phi^s_-(x_0,s-h)
        = u_0(x_0)-C_0s - u_0(x_0)+C_0(s-h)\notag \\ 
        &= - C_0h.\label{eq:apriori-2}
    \end{align}
    Dividing both side by $h$ and taking $\lim_{h\to +0}$,
    we have $\phi_t(x_0,s)\leq -C_0$.
    Also, since $u_0$ is Lipschitz in space,
    we have $|D\phi(x_0,s)|\leq \lVert D \phi^s_-(\cdot, s)\rVert_{L^\infty(\R^n)}=\lVert Du_0 \rVert_{L^\infty(\R^n)}$.
    Thus, it follows that
    \begin{align*}
        \phi_t(x_0,s)+H(x_0,D\phi(x_0,s))\leq -C_0 + C_0 =0.
    \end{align*}
    The rest inequality $\phi^s_-(x_0,s) -\psi(x_0)\leq 0$ holds by its definition.
    Next, we comfirm the inequality $\phi_-\leq \phi_-^s \leq \phi_+$.
    By the definition of $v$ and the fact 
    that $v^*$ is the smallest upper semicontiuous
    majorant of $v$, 
    we have $\phi_- \leq v^* \leq \phi_+$ on $\R^n\times[0,\infty)$.
    Therefore, it holds, for any $(x,t)$ with $t>s$, that
    \begin{align*}
        &\phi^s_-(x,t)= v^*(x,t-s)-C_0s \geq \phi_-(x,t-s)-C_0 s = \phi_-(x_0,t),\\
        &\phi^s_-(x,t)\leq \phi_+(x,t-s)-C_0s
        \leq\phi_+(x,t).
    \end{align*}
    For a point $(x,t)$ with $0\leq t\leq s$, there is nothing to prove $\phi_-\leq \phi_-^s\leq \phi_+$.
\end{subproof}
From this Claim,
we have $\phi^s_- \leq v^*$ by the definition of $v$,
and this implies 
\begin{align*}
    v^*(x,t)-v^*(x,t-s)\geq -C_0 s \quad \text{for every } (x,t) \text{ with } 0\leq s < t.
\end{align*}
One can see that this inequality implies
\begin{align}\label{eq:apriori-3}
    -(v^*)_t \leq C_0 \quad \text{in } \R^n\times (0,\infty)
\end{align}
in the sense of viscosity solutions.

By the coercivity of $H$,
we note that it follows,
for all $M>0$,
that there exists $R>0$ such that 
\begin{align}\label{eq:apriori-4}
    \text{if } p\in\R^n \text{ satisfies }\inf_{y\in\R^n}H(y,p) \leq M, \quad \text{then} \quad |p|\leq R.
\end{align}
By (\ref{eq:apriori-1}) and (\ref{eq:apriori-3}),
we obtain
\begin{align}
    H(x,Dv^*)\leq \max\{C_0, C_1\}
    \label{eq:apriori-4.5}
\end{align}
in the sense of viscosity solutions. 
From (\ref{eq:apriori-1}),
(\ref{eq:apriori-3}), (\ref{eq:apriori-4})
and (\ref{eq:apriori-4.5}),
$v^*$ satisfies, for a constant $C>0$,
\begin{align}\label{eq:apriori-5}
    |(v^*)_t|+|Dv^*|\leq C \quad \text{in } \R^n\times (0,\infty)
\end{align}
in the sense of viscosity solutions.
Inequality (\ref{eq:apriori-5}) implies that the subsolution $v^*$ is Lipschitz continuous
in space and time with $C$ being a Lipschitz bound 
(see \cite[Proposition 1.14]{MR3135341}),
and by Proposition \ref{thm:comparison},
we have $v^*\leq v_*$. 
Therefore, $v= v^* = v_*$ is a unique viscosity solution to (\ref{OP}).

If we consider (\ref{OP}) for general $\varepsilon>0$,
by the determination of $C>0$ in (\ref{eq:apriori-5}),
one can see that this Lipschitz bound is independent of $\varepsilon$.
\end{proof}

\subsection{Representation formulas}
We present the optimal control framework which describes viscosity solutions to (\ref{OP}) and (\ref{effOP}).
In this subsection,
we assume (\ref{assump:quad-growth}) instead of (A\ref{assump:coer}) for Hamiltonian $H$.
We denote by $L=L(y,q)$ and $\overline{L}=\overline{L}(q)$,
the Legendre transform of $H$ and $\overline{H}$, respectively, that is,
\begin{align}
    L(y,q):=\sup_{p\in\R^n} \{p\cdot q - H(y,p)\},
    \quad \overline{L}(q)
    :=\sup_{p\in\R^n}\{p\cdot q-\overline{H}(p)\}.
\end{align}
If $H$ satisfies (\ref{assump:H-BUC}), (\ref{assump:quad-growth}) and (A\ref{assump:convex}),
then $L$ also satisfies the same conditions:
\begin{gather}
    L\in \BUC(\R^n\times B(0,R)) \quad \text{for all } R>0, \label{assump:L-BUC}\\
    \frac{1}{2}|q|^2-K_0 \leq L(y,q) \leq \frac{1}{2}|q|^2+K_0
    \quad \text{for all } (y,q)\in\R^n\times\R^n,
    \label{assump:L-quad-growth}
\end{gather}
and
\begin{gather}
    \text{the map }q \mapsto L(y,q) \text{ is convex for each } y\in\R^n.\label{assump:L-convex}
\end{gather}
Assume $u_0, \psi\in C(\R^n)$ satisfies, for a constant $c\in\R$,
\begin{gather}
    c< u_0(x)\leq \psi(x) \quad \text{for all } x\in\R^n.
    \label{assump:termial-stopping-costs}
\end{gather}
For each $(x,t)\in \R^n\times [0,\infty)$,
we define the value function $v$ by
\begin{align}
    v(x,t) = \inf_{\substack{\gamma\in\mathcal{A}(x,t)\\\theta\in[0,t]}}
    J(x,t,\gamma,\theta)\label{eq:value-fcn}
\end{align}
where
\begin{equation*}
    \A(x,t):=\{\gamma\in \AC([0,t];\R^n)
\; : \; \gamma(t)=x\}
\end{equation*}
is the set of all admissible trajectories,
and $J$ is a cost functional defined by
\begin{align*}
    J(x,t,\gamma,\theta)
    :=\int_\theta^t L(\gamma(s),\dot{\gamma}(s))ds
    +\mathbf{1}_{\{\theta=0\}}u_0(\gamma(0))
    +\mathbf{1}_{\{\theta>0\}}\psi(\gamma(\theta)).
\end{align*}

In this subsection, 
we review that the value function $v$ solves a corresponding obstacle problem of a Hamilton--Jacobi equation.
For the basic properties between value functions for finite-horizon optimal control problems and the corresponding Hamilton--Jacobi equations, we refer to \cite{MR4328923}. 
The relationship between value functions for infinite-horizon problems and stationary Hamilton--Jacobi equations with obstacles is provided in \cite{MR1484411}.
Drawing on these literatures,
we present a proof of the optimal control representation formulas for (\ref{OP}) and (\ref{effOP}).

First, we prove the dynamic programming principle that the value function $v$ satisfies.

\begin{proposition}[Dynamic programming principle]\label{thm:DPP}
    Assume $(\ref{assump:L-BUC})$--$(\ref{assump:termial-stopping-costs})$,
    and let $v$ be the function defined by $(\ref{eq:value-fcn})$.
    Then,
    $v$ satisfies
    \begin{align}
        v(x,t)
        =\inf_{\substack{\gamma\in\mathcal{A}(x,t)\\\theta\in[0,t]}}\left\{
        \int_{\theta\vee s}^t L(\gamma(r),\dot{\gamma}(r))dr
        +\mathbf{1}_{\{\theta\leq s\}}v(\gamma(s),s)
        +\mathbf{1}_{\{\theta> s\}}\psi (\gamma(\theta))
        \right\}\label{eq:DPP}
    \end{align}
     for all $(x,t)\in\R^n\times[0,\infty)$ and $s\in[0,t]$.
\end{proposition}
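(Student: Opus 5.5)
I would prove \eqref{eq:DPP} by showing the two inequalities separately. Write $W(x,t;s)$ for the right-hand side of \eqref{eq:DPP}. The argument rests on two ingredients: restricting a trajectory to $[0,s]$, and concatenating two trajectories. First I record that $v$ is finite and bounded below on $\R^n\times[0,t]$. Indeed, by \eqref{assump:L-quad-growth} and \eqref{assump:termial-stopping-costs} every cost satisfies $J\ge c-K_0t$. The constant curve $\gamma\equiv x$ with $\theta=0$ gives $v(x,t)\le u_0(x)+K_0t<\infty$. Hence the infima in the concatenation step below are finite and $\delta$-optimal choices exist.

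\emph{Step 1: $v\ge W$.} Fix $\gamma\in\A(x,t)$ and $\theta\in[0,t]$, and split into two cases.
\begin{itemize}
\item If $\theta>s$, then $J(x,t,\gamma,\theta)$ coincides with the bracket in \eqref{eq:DPP} for the same pair $(\gamma,\theta)$, since $\theta\vee s=\theta$.
\item If $\theta\le s$, I split the integral as
\[
\int_\theta^t=\int_\theta^s+\int_s^t .
\]
The restriction $\gamma|_{[0,s]}$ belongs to $\A(\gamma(s),s)$ and $\theta\in[0,s]$, so the pair $(\gamma|_{[0,s]},\theta)$ is admissible for $v(\gamma(s),s)$. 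The part of $J$ on $[0,s]$, including the terminal or stopping cost, is therefore $\ge v(\gamma(s),s)$. This gives $J\ge$ the bracket.
\end{itemize}
Taking the infimum over $(\gamma,\theta)$ yields $v(x,t)\ge W(x,t;s)$.

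\emph{Step 2: $v\le W$.} Fix $\delta>0$ and a pair $(\gamma,\theta)$ whose bracket value is within $\delta$ of $W$.
\begin{itemize}
\item If $\theta>s$, the bracket equals $J(x,t,\gamma,\theta)\ge v(x,t)$, and we are done.
\item If $\theta\le s$, I choose $(\eta,\tau)\in\A(\gamma(s),s)\times[0,s]$ with $J(\gamma(s),s,\eta,\tau)\le v(\gamma(s),s)+\delta$. Then I define the concatenation $\tilde\gamma=\eta$ on $[0,s]$ and $\tilde\gamma=\gamma$ on $[s,t]$. It is absolutely continuous because the two pieces agree at time $s$, and $\tilde\gamma(t)=x$. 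With stopping time $\tau$ this gives
\[
J(x,t,\tilde\gamma,\tau)=\int_s^tL(\gamma,\dot\gamma)\,dr+J(\gamma(s),s,\eta,\tau)\le W+2\delta .
\]
\end{itemize}
Letting $\delta\to0$ gives $v\le W$.

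The only mildly delicate points are bookkeeping rather than analysis. The indicator conventions at the endpoints need care: $\theta=s=0$ must give consistent terms, and $v(\cdot,0)=u_0$ holds since for $t=0$ only $\theta=0$ is admissible. The finiteness of $v$ noted at the start is what guarantees the $\delta$-optimal pair in Step 2 exists. I do not expect a genuine obstacle; \eqref{assump:L-BUC} and \eqref{assump:L-convex} are not needed here beyond measurability and integrability of $L(\gamma,\dot\gamma)$, which is allowed to be $+\infty$ without harm.
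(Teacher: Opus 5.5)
Your proposal is correct and follows essentially the same argument as the paper. Both proofs split into the cases $\theta>s$ and $\theta\le s$. Both restrict $\gamma$ to $[0,s]$ to get $v\ge W$, and both concatenate with a control admissible at $(\gamma(s),s)$ to get $v\le W$. The only difference is cosmetic: you choose $\delta$-optimal pairs, whereas the paper takes the infimum over $(\tilde\gamma,\tilde\theta)$ directly for each fixed $(\gamma,\theta)$. Your route needs the finiteness and lower bound you record at the start; the paper's route does not.
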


\begin{proof}
    Fix $(x,t)$ and $s\in[0,t]$ arbitrarily.
    Then, for all $\gamma\in\mathcal{A}(x,t)$ and $\theta\in[0,t]$ with $s<\theta\leq t$,
    we have
    \begin{align}
        J(x,t,\gamma,\theta)
        = \int_\theta ^t L(\gamma(r),\dot{\gamma}(r))dr
        + \psi (\gamma(\theta))
        \geq (\text{Right-hand side of (\ref{eq:DPP})}).\label{eq:DPP-1}
    \end{align}
    And for all $\gamma\in\mathcal{A}(x,t)$ and $\theta\in[0,t]$ with $0\leq \theta\leq s$,
    we have 
    \begin{align}
        J(x,t,\gamma,\theta) 
        &=\int_s^t L(\gamma(r),\dot{\gamma}(r))dr
        +J(\gamma(s),s,\gamma|_{[0,s]},\theta)\notag\\
        &\geq \int_s^t L(\gamma(r),\dot{\gamma}(r))dr
        +v(\gamma(s),s)
        \geq (\text{Right-hand side of (\ref{eq:DPP})}).\label{eq:DPP-2}
    \end{align}
    We combining (\ref{eq:DPP-1}) and (\ref{eq:DPP-2}),
    and taking $\inf_{\gamma,\theta}$, it follows that
    \begin{align*}
        v(x,t)= \inf_{\substack{\gamma\in\mathcal{A}(x,t)\\ \theta\in[0,t]}} J(x,t,\gamma,\theta)\geq (\text{Right-hand side of (\ref{eq:DPP})}).
    \end{align*}
    Conversely, 
    for all $\gamma\in\mathcal{A}(x,t)$ and $\theta\in[0,t]$ with $s<\theta\leq t$,
    we have
    \begin{align}
        v(x,t) \leq J(x,t,\gamma,\theta)
        =\int_{\theta\vee s}^t L(\gamma(r),\dot{\gamma}(r))dr + 
        \mathbf{1}_{\{\theta\leq s\}} v(\gamma(s),s)
        +\mathbf{1}_{\{\theta> s\}}\psi(\gamma(\theta)).
        \label{eq:DPP-3}
    \end{align}
    For $\gamma\in\mathcal{A}(x,t)$ and $\theta\in[0,t]$ with $0\leq \theta\leq s$, 
    and for $\widetilde{\gamma}\in\mathcal{A}(\gamma(s),s)$
    and $\widetilde{\theta}\in[0,s]$,
    we let 
    \begin{align*}
        \xi (r) = \left\{
        \begin{aligned}
            &\widetilde{\gamma}(r) &&\text{if } 0\leq t\leq s,\\
            &\gamma (r) &&\text{if } s< r\leq t.
        \end{aligned}
        \right.
    \end{align*}
    Then, we have $\xi\in\mathcal{A}(x,t)$ and
    \begin{align}
        v(x,t) \leq J(x,t,\xi,\widetilde{\theta})
        =\int_s^t L(\gamma(r),\dot{\gamma}(r))dr 
        + J(\gamma(s),s,\widetilde{\gamma},\widetilde{\theta}).\label{eq:DPP-4}
    \end{align}
    Thus, by taking $\inf_{\widetilde{\gamma},\widetilde{\theta}}$ in (\ref{eq:DPP-4}), 
    we have, for all $\gamma\in\mathcal{A}(x,t)$ and $\theta\in[0,t]$ with $0\leq\theta\leq s$,
    \begin{align}
        v(x,t)&\leq \int_s^t L(\gamma(r),\dot{\gamma}(r))dr
        +v(\gamma(s),s)\notag\\
        &=\int_{\theta\vee s}^t L(\gamma(r),\dot{\gamma}(r))dr + 
        \mathbf{1}_{\{\theta\leq s\}} v(\gamma(s),s)
        +\mathbf{1}_{\{\theta> s\}}\psi(\gamma(\theta)).\label{eq:DPP-5}
    \end{align}
    Again, 
    we combining (\ref{eq:DPP-3}) and (\ref{eq:DPP-5}),
    and taking $\inf_{\gamma,\theta}$,
    it follows that 
    \begin{align*}
        v(x,t)\leq (\text{Right-hand side of (\ref{eq:DPP})}).
    \end{align*}
\end{proof}

Furthermore, as shown in the propositions below, we obtain additional information on the behavior of the stopping time.

\begin{proposition}\label{thm:ex-DPP}
    Under the same assumptions of Proposition $\ref{thm:DPP}$,
    if $v(x,t)<\psi(x)$, 
    there exists $s_0=s_0(x,t)\in [0,t)$ such that
    \begin{align*}
        v(x,t)
        =\inf_{\gamma\in\mathcal{A}(x,t)}\left\{
        \int_s^t L(\gamma(r),\dot{\gamma}(r))dr
        + v(\gamma(s),s)
        \right\}
    \end{align*}
    for all $s\in(s_0,t]$.
\end{proposition}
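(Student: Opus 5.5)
The idea is to use the strict gap $\delta := \psi(x)-v(x,t)>0$ to rule out stopping at times close to $t$. Combined with the dynamic programming principle of Proposition \ref{thm:DPP}, this will show that near $t$ the infimum is attained, up to arbitrarily small error, by trajectories that do not stop before $s$.

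\textbf{Step 1: stopping near $t$ is costly.} For $\theta\in(s,t]$, the DPP term has the form
\[
\int_\theta^t L(\gamma,\dot\gamma)\,dr+\psi(\gamma(\theta)).
\]
I will bound this from below by $v(x,t)+\delta/2$ whenever $t-\theta$ is small.

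Set $\tau=t-\theta$. By \eqref{assump:L-quad-growth} and Cauchy--Schwarz,
\[
\int_\theta^t L \ \ge\ \frac{|x-\gamma(\theta)|^2}{2\tau}-K_0\tau .
\]
Let $\omega$ be a modulus of continuity of $\psi$ on $\overline{B(x,1)}$. There are two cases for the stopping point $\gamma(\theta)$.

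If $|\gamma(\theta)-x|\le \rho$, then
\[
\psi(\gamma(\theta))\ \ge\ \psi(x)-\omega(\rho)\quad\text{and}\quad \int_\theta^t L\ \ge\ -K_0\tau .
\]
If $|\gamma(\theta)-x|>\rho$, then the integral is at least $\rho^2/(2\tau)-K_0\tau$, and $\psi(\gamma(\theta))\ge c$ because $\psi\ge u_0>c$ by \eqref{assump:termial-stopping-costs}.

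Choose $\rho\in(0,1)$ with $\omega(\rho)<\delta/4$. Then choose $\tau_0>0$ so small that $K_0\tau_0<\delta/4$ and
\[
\frac{\rho^2}{2\tau_0}-K_0\tau_0+c\ \ge\ \psi(x)+1 .
\]
(The case $\rho$ versus radius $1$ is harmless; if $|\gamma(\theta)-x|>1$ we are in the second case anyway.) In both cases the term is at least $\psi(x)-\delta/2=v(x,t)+\delta/2$. Set $s_0:=(t-\tau_0)\vee 0$.

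\textbf{Step 2: conclude via the DPP.} Fix $s\in(s_0,t]$. In \eqref{eq:DPP}, every competitor with $\theta>s$ has $t-\theta<\tau_0$, so by Step 1 its value is at least $v(x,t)+\delta/2$. Competitors with $\theta\le s$ give
\[
\int_s^t L+v(\gamma(s),s),
\]
which does not depend on $\theta$. Since the overall infimum equals $v(x,t)<v(x,t)+\delta/2$, it must be realized by the second class. Hence
\[
v(x,t)=\inf_{\gamma}\Bigl\{\int_s^t L+v(\gamma(s),s)\Bigr\}.
\]
For $s=t$ this is trivial. Note that $t>0$ automatically: if $t=0$ then $v(x,0)=u_0(x)$, and we would need $u_0(x)<\psi(x)$, so $s_0\in[0,t)$ only makes sense for $t>0$. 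The $t=0$ case is vacuous or handled separately.

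The main obstacle is Step 1. It requires the uniform lower bound on $\psi$ from \eqref{assump:termial-stopping-costs} to control far-away stopping points, and the superlinear (quadratic) lower bound on $L$ so that reaching them in short time is expensive. Once this is in place, the rest is bookkeeping with Proposition \ref{thm:DPP}.
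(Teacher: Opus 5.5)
Your proof is correct. It uses the same mechanism as the paper's proof. The quadratic lower bound on $L$ makes a displacement $|x-\gamma(\theta)|$ over a short interval $t-\theta$ cost at least $|x-\gamma(\theta)|^2/(2(t-\theta))-K_0(t-\theta)$. The lower bound $\psi>c$ from \eqref{assump:termial-stopping-costs} and the continuity of $\psi$ at $x$ then force any late stop to cost about $\psi(x)>v(x,t)$.

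The execution differs. The paper argues by contradiction on the sets $\mathbb{A}^\delta(x,t)$ of $\delta$-optimal pairs. It assumes there are $\delta_k$-optimal pairs with $\theta_k\to t$. It then uses the energy bound coming from near-optimality to get $|x-\gamma_k(\theta_k)|\le C(t-\theta_k)^{1/2}$, and concludes $v(x,t)\ge\psi(x)$. To discard the $\theta>s$ branch, it also has to re-derive the dynamic programming identity with the infimum restricted to $\mathbb{A}^\delta(x,t)$.

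You instead bound every competitor with $\theta>s$ directly, using the near/far split, and read the conclusion off the unrestricted identity of Proposition \ref{thm:DPP}. This avoids the restricted DPP and the contradiction step. It also gives an explicit $s_0=(t-\tau_0)\vee 0$ in terms of the gap $\psi(x)-v(x,t)$, $K_0$, $c$ and the local modulus of $\psi$. The paper's version additionally records that every sufficiently near-optimal control stops before $s_0$, but only the displayed identity is used afterwards.

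Your remark on $t=0$ is right. There $v(x,0)=u_0(x)$ and $[0,t)$ is empty, so the statement tacitly requires $t>0$, as the paper's proof does.
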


\begin{proof}
    Fix $(x,t)\in\R^n\times[0,\infty)$ satisfying
    $v(x,t)<\psi(x)$ arbitrarily.
    For $\delta >0$, let 
    \begin{align*}
        \mathbb{A}^\delta (x,t)
        :=\{(\gamma,\theta)\in\mathcal{A}(x,t)\times [0,t]\; : \; J(x,t,\gamma,\theta)< v(x,t)+\delta\}.
    \end{align*}
    Then, by the same computations to the proof of Proposition \ref{thm:DPP},
    one can show that 
    \begin{align*}
        v(x,t)
        &=\inf_{(\gamma,\theta)\in \mathbb{A}^\delta(x,t)}
        J(x,t,\gamma,\theta)\\
        &=\inf_{(\gamma,\theta)\in\mathbb{A}^\delta(x,t)}
        \left\{
        \int_{\theta\vee s}^t L(\gamma(r),\dot{\gamma}(r))dr
        +\mathbf{1}_{\{\theta\leq s\}}v(\gamma(s),s)
        +\mathbf{1}_{\{\theta> s\}}\psi (\gamma(\theta))
        \right\}
    \end{align*}
    for each $s\in[0,t]$ and $\delta >0$.
    
    We show that there exists $\delta_0>0$ and $s_0\in[0,t)$
    such that, for all $\delta\in(0,\delta_0]$,
    \begin{align}
    (\gamma,\theta)\in\mathbb{A}^\delta (x,t)
    \quad\text{implies}\quad \theta\leq s_0\ (<t).
    \label{eq:ex-DPP-1}
    \end{align}
    Assume by contradiction, 
    there exist $\{\delta_k\}_{k=1}^\infty$ and 
    $\{(\gamma_k,\theta_k)\}_{k=1}^\infty$ satisfying
    \begin{align*}
        \delta_k\downarrow 0,\quad
        (\gamma_k,\theta_k)\in \mathbb{A}^{\delta_k} (x,t)\quad
        \text{and} \quad \theta_k\uparrow t \quad 
        \text{as } k\to\infty.
    \end{align*}
    Then, by the growth condition (\ref{assump:L-quad-growth}) of $L$ implies
    \begin{align*}
        |x-\gamma_k(\theta_k)|
        &\leq \int_{\theta_k}^t |\dot{\gamma_k}(r)|dr
        \leq (t-\theta_k)^{1/2} \left(
        \int_{\theta_k}^t |\dot{\gamma_k}(r)|^2dr\right)^{1/2}\\
        &\leq (t-\theta_k)^{1/2}\left(
        \int_{\theta_k}^t (2L(\gamma_k(r),\dot{\gamma_k}(r)) +K_0)dr \right)^{1/2}\\
        &\leq (t-\theta_k)^{1/2}
        \left(
        2\{v(x,t) +\delta_k -\inf_{z\in\R^n}\psi(z)\}+K_0(t-\theta_k)
        \right)^{1/2}
        \leq C(t-\theta_k)^{1/2}.
    \end{align*}
    Therefore, we have
    \begin{align}
        v(x,t)=\lim_{k\to\infty} J(x,t,\gamma_k,\theta_k)\geq \lim_{k\to\infty}
        \left\{\int_{\theta_k}^t (-C_1)dr+\psi(\gamma_k(\theta_k))\right\}
        =\psi(x),\label{eq:ex-DPP-2}
    \end{align}
    where $C_1>0$ is a constant,
    determined
    by (\ref{assump:L-BUC}) and (\ref{assump:L-quad-growth}),
    such that $L(y,q)\geq-C_1$ for all $(y,q)\in\R^n\times\R^n$.
    Inequality (\ref{eq:ex-DPP-2}) contradicts to
    the assumption that $v(x,t)<\psi(x)$.

    From (\ref{eq:ex-DPP-1}),
    we have,
    for all $\delta\in(0,\delta_0]$ and $s\in(s_0,t]$,
    \begin{align*}
        v(x,t)
        &= \inf_{(\gamma,\theta)\in\mathbb{A}^\delta (x,t)}
        \left\{
        \int_{\theta\vee s}^t L(\gamma(r),\dot{\gamma}(r))dr+\mathbf{1}_{\{\theta\leq s\}}v(\gamma(s),s)
        +\mathbf{1}_{\{\theta> s\}}\psi (\gamma(\theta))
        \right\}\\
        &=\inf_{(\gamma,\theta)\in\mathbb{A}^\delta (x,t)}
        \left\{
        \int_{s}^t L(\gamma(r),\dot{\gamma}(r))dr+v(\gamma(s),s)
        \right\}\\
        &=\inf_{\gamma\in\mathcal{A}(x,t)}
        \left\{\int_s^t L(\gamma(r),\dot{\gamma}(r))dr +v(\gamma(s),s)\right\}.
    \end{align*}
\end{proof}

\begin{proposition}\label{thm:lsc-in-theta}
    Under the same assumptions of Proposition $\ref{thm:DPP}$,
    for each $(x,t)$,
    the function $[0,t]\ni \theta \mapsto \inf_{\gamma\in \mathcal{A}(x,t)} J(x,t,\gamma,\theta)$
    is lower semicontinuous.
\end{proposition}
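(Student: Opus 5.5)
Set $F(\theta):=\inf_{\gamma\in\mathcal{A}(x,t)} J(x,t,\gamma,\theta)$. The plan is to write $F$ as a stopping cost plus a minimal travel cost and prove lower semicontinuity by compactness of near-minimizers. For $\theta>0$,
\begin{align*}
    F(\theta)=\inf_{z\in\R^n}\bigl\{ \psi(z) + m_L(t-\theta,z,x)\bigr\},
    \quad
    m_L(\tau,z,x):=\inf\Bigl\{\int_0^\tau L(\eta(r),\dot\eta(r))\,dr : \eta(0)=z,\ \eta(\tau)=x\Bigr\},
\end{align*}
with $m_L(0,z,x)=0$ if $z=x$ and $+\infty$ otherwise. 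For $\theta=0$, $\psi$ is replaced by $u_0$. Since $u_0\le\psi$, we have $F(0)\le\inf_z\{\psi(z)+m_L(t,z,x)\}$. This matters when $\theta_k\to0$ with $\theta_k>0$: any limit of values along such a sequence is at least the $\psi$-version of the formula at $\theta=0$, which is $\ge F(0)$. So the endpoint $\theta=0$ only helps.

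Fix $\theta_k\to\theta$ with $\liminf F(\theta_k)=\lim F(\theta_k)=:\ell<\infty$, and choose $\gamma_k$ with $J(x,t,\gamma_k,\theta_k)\le F(\theta_k)+1/k$. By (\ref{assump:L-quad-growth}) and the lower bound $\psi,u_0>c$ from (\ref{assump:termial-stopping-costs}),
\[
\int_{\theta_k}^t|\dot\gamma_k|^2\le C.
\]
Set $z_k:=\gamma_k(\theta_k)$. Then $|z_k-x|\le C(t-\theta_k)^{1/2}$, so $z_k$ is bounded and, after passing to a subsequence, $z_k\to z$. Reparametrize each piece on $[\theta_k,t]$ to a fixed interval, or simply treat them as curves with uniformly bounded $H^1$-seminorm. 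If $\theta<t$, extract a subsequence converging uniformly to some $\gamma$ on $[\theta,t]$ with $\gamma(\theta)=z$, and $\dot\gamma_k\rightharpoonup\dot\gamma$ weakly in $L^2$ on compact subintervals. If $\theta=t$, then $z=x$ and $F(t)=\psi(x)$ (or $u_0(x)$ when $t=0$). In that case the integral part is $\ge -C_1(t-\theta_k)\to0$, and continuity of $\psi$ gives $\ell\ge\psi(x)$, exactly as in (\ref{eq:ex-DPP-2}).

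The key step, and the main obstacle, is the lower semicontinuity of the action,
\[
\int_\theta^t L(\gamma,\dot\gamma)\,dr\le\liminf_k\int_{\theta_k}^t L(\gamma_k,\dot\gamma_k)\,dr .
\]
This is the standard Tonelli argument, using convexity (\ref{assump:L-convex}) and continuity (\ref{assump:L-BUC}). We need to handle the moving endpoint and the $y$-dependence. For the endpoint: for any $\theta'>\theta\vee\sup_{k\ge k_0}\theta_k$, the integral over $[\theta_k,\theta']$ is $\ge -C_1|\theta'-\theta_k|$, so it suffices to prove lower semicontinuity on $[\theta',t]$ and then let $\theta'\downarrow\theta$. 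When $\theta_k<\theta$ for some $k$, we restrict to $[\theta,t]$ and pay again at most $C_1|\theta-\theta_k|$ from the lower bound. For the $y$-dependence: on $[\theta',t]$, the uniform convergence $\gamma_k\to\gamma$ and the uniform continuity of $L$ on bounded sets let us replace $L(\gamma_k,\cdot)$ by $L(\gamma,\cdot)$, with an $o(1)$ error on the set where $|\dot\gamma_k|\le R$. On the complement we use $L\ge\frac12|q|^2-K_0$ together with the $L^2$ bound; to avoid trouble there, truncate $L$ from above by $L\wedge N$ in the weak-lsc step. Then weak lower semicontinuity of $q\mapsto\int L(\gamma,q)$ for convex integrands (Mazur) finishes the argument.

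Combining, $\ell\ge\int_\theta^tL(\gamma,\dot\gamma)+\psi(z)\ge F(\theta)$ when $\theta>0$, using $\psi(z_k)\to\psi(z)$. When $\theta=0$ we get the same bound with $\psi$, which is $\ge u_0(z)$ version, hence $\ge F(0)$. Finally, extend $\gamma$ to $[0,\theta]$ by a constant so that it is admissible in $\mathcal{A}(x,t)$.
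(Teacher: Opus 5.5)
Your argument is correct in outline, and it takes a genuinely different route from the paper. The paper uses no compactness at all. For $\theta\in(0,t]$ it writes $\inf_{\gamma}J(x,t,\gamma,\theta)=w(x,t-\theta)$, where $w$ is the value function of the unconstrained problem with initial cost $\psi$, and it takes continuity in $\theta$ from the (assumed known) continuity of $w$. The only real work is at $\theta=0$: it takes a $\delta$-optimal $\gamma_k$ for $\theta_k\downarrow 0$, freezes it at $\gamma_k(\theta_k)$ on $[0,\theta_k)$, and uses $u_0\le\psi$ to get $V(x,t,0)\le V(x,t,\theta_k)+\delta+\theta_k\sup_z|L(z,0)|$. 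You instead run the direct method uniformly in $\theta$: the $L^2$ bound on $\dot\gamma_k$ from (\ref{assump:L-quad-growth}) and (\ref{assump:termial-stopping-costs}), Arzel\`a--Ascoli, and Tonelli's lower semicontinuity of the action. In your version $u_0\le\psi$ enters at the limit point $z$ rather than at $\gamma_k(\theta_k)$. Your version does not depend on the continuity of $w$, which the paper does not prove. With $\theta_k\equiv\theta$ it also produces a minimizing curve for each fixed $\theta$, which the paper later imports from the literature in Proposition \ref{prop:lip-minimizer}. The price is the heavier Tonelli machinery, and on $(0,t]$ you get only lower semicontinuity where the paper's identification gives continuity.

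One step in your sketch of the Tonelli argument is wrong and should be removed. $L\wedge N$ is not convex in $q$ (consider $\min\{q^2,1\}$), so Mazur's lemma gives no weak lower semicontinuity for the truncated integrand. The truncation is also unnecessary:
\begin{itemize}
\item On $E_k=\{|\dot\gamma_k|>R\}$, whose measure is at most $C/R^2$, both sides of (\ref{assump:L-quad-growth}) give $L(\gamma_k,\dot\gamma_k)\ge L(\gamma,\dot\gamma_k)-2K_0$.
\item Off $E_k$, (\ref{assump:L-BUC}) bounds the difference by $\omega_R(\lVert\gamma_k-\gamma\rVert_\infty)$.
\end{itemize}
Hence $\int L(\gamma_k,\dot\gamma_k)\ge\int L(\gamma,\dot\gamma_k)-t\,\omega_R(\lVert\gamma_k-\gamma\rVert_\infty)-2K_0C/R^2$. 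The functional $q\mapsto\int L(\gamma(r),q(r))\,dr$ is convex by (\ref{assump:L-convex}) and strongly lower semicontinuous on $L^2$ by Fatou, since $L\ge -K_0$. It is therefore weakly lower semicontinuous, and you conclude by letting $k\to\infty$ and then $R\to\infty$.
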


\begin{proof}
    Define $V(x,t,\theta):=\inf_{\gamma\in\mathcal{A}(x,t)} J(x,t,\gamma,\theta)$ and
    take $(x,t)\in \R^n\times (0,\infty)$ arbitrarily.
    Then, it is obvious that $\theta\mapsto V(x,t,\theta)$ is continuous on $(0,t]$ since $V(x,t,\theta)$ can be written as
    \begin{align*}
        V(x,t,\theta)
        =\inf_{\gamma\in\mathcal{A}(x,t-\theta)}
        \left\{
        \int_0^{t-\theta} L(\gamma(s),\dot{\gamma}(s))ds
        +\psi(\gamma(0))
        \right\}
        = w(x,t-\theta)
    \end{align*}
    for each $\theta\in(0,t]$, where $w=w(x,s)$ is 
    another value function with the dynamic programming principle
    \begin{align*}
        w(x,s) = \inf_{\xi \in\mathcal{A}(x,s)}
        \left\{
        \int_r^s L(\xi (\tau),\dot{\xi}(\tau))d\tau +w(\xi(r),r)
        \right\},
        \quad x\in\R^n,\ 0\leq r\leq s.
    \end{align*}
    We prove lower semicontinuity of $\theta\mapsto V(x,t,\theta)$ at $\theta=0$.
    Let $\{\theta_k\}\subset (0,t]$ be a sequence such that
    $\theta_k\downarrow 0$ as $k\to\infty$.
    Take $\delta>0$ arbitrarily small, 
    then there exists a sequence $\{\gamma_k\}\subset \mathcal{A}(x,t)$
    such that 
    \begin{align*}
        V(x,t,\theta_k)\leq J(x,t,\gamma_k,\theta_k)
        < V(x,t,\theta_k)+\delta,
        \quad \text{for all } k\in \N.
    \end{align*}
    For each $k\in\N$, 
    we define the trajectory $\widetilde{\gamma}_k\in \mathcal{A}(x,t)$ by
    \begin{align*}
        \widetilde{\gamma}_k(s)
        :=\left\{
        \begin{aligned}
            &\gamma_k (\theta_k) &&\text{if }s\in[0,\theta_k),\\
            &\gamma_k(s) &&\text{if } s\in[\theta_k,t].
        \end{aligned}
        \right.
    \end{align*}
    Then, by 
    (\ref{assump:L-BUC}) and
    (\ref{assump:termial-stopping-costs}), 
    it follows that
    \begin{align*}
        V(x,t,0)&\leq J(x,t,\widetilde{\gamma}_k,0)\\
        &=\int_0^{\theta_k} L(\gamma_k(\theta_k),0)ds
            +J(x,t,\gamma_k,\theta_k)
            +\left(
                u_0(\gamma_k(\theta_k))-\psi(\gamma_k(\theta_k))
            \right)\\
        &\leq \theta_k\cdot \sup_{z\in\R^n}|L(z,0)|
            +\left( 
                V(x,t,\theta_k)+\delta
            \right)+0.
    \end{align*}
    Therefore, we have $\liminf_{k\to\infty}V(x,t,\theta_k)\geq V(x,t,0)-\delta$, 
    and $\delta$ and $\{\theta_k\}$ can be arbitrarily chosen, it follows that 
    $\liminf_{\theta\to + 0} V(x,t,\theta)\geq V(x,t,0)$.
\end{proof}

Based on the arguments above, we prove that the value function $v$ satisfies the corresponding variational inequality of obstacle type.

\begin{proposition}\label{thm:Bellman-eq}
    Assume $(\ref{assump:L-BUC})$--$(\ref{assump:termial-stopping-costs})$.
    Then, the value function $v$ defined by $(\ref{eq:value-fcn})$ is a viscosity solution of
    \begin{align}
        \left\{
        \begin{aligned}
            &\max\{v_t+H(x,Dv),\ v-\psi(x)\}=0
            &&\text{in }\R^n\times(0,\infty),\\
            &v(x,0)=u_0(x)
            &&\text{on }\R^n,
        \end{aligned}
        \right.\label{eq:Bellman-eq}
    \end{align}
    where $H(y,p)=\sup_{q\in\R^n}\{p\cdot q - L(y,q)\}$.
\end{proposition}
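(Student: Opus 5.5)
We verify in turn the initial condition, the obstacle constraint, the subsolution inequality for the PDE, and the supersolution inequality off the contact set, using the dynamic programming principle (Proposition \ref{thm:DPP}), its refinement Proposition \ref{thm:ex-DPP}, and Proposition \ref{thm:lsc-in-theta}.

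\emph{Preliminary facts.} First we check that $v$ is finite and continuous. The choice $\theta=t$ gives $v(x,t)\le\psi(x)$. The bounds (\ref{assump:L-quad-growth}) and $c<u_0\le\psi$ give $v\ge c-K_0t$. We expect $v$ to be locally Lipschitz in $(x,t)$, via the usual translation and time-rescaling comparison of near-optimal curves together with the uniform continuity (\ref{assump:L-BUC}). Actually only continuity is needed, and if this becomes delicate one can argue with $v^*$ and $v_*$ instead.

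\emph{Initial condition and obstacle.} At $t=0$ the only admissible pair is $\theta=0$, so $v(x,0)=u_0(x)$. The constraint $v\le\psi$ has already been noted. It remains to show $v(x,t)\to u_0(x)$ as $t\to0$. For the upper bound, take the constant curve with $\theta=0$: this gives $v\le u_0(x)+t\sup_z|L(z,0)|$. For the lower bound, the quadratic growth confines near-optimal curves to a neighbourhood of radius $O(\sqrt t)$. Since $L\ge -C_1$ and $\psi\ge u_0$, the continuity of $u_0$ then gives $v(x,t)\ge u_0(x)-o(1)$.

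\emph{Subsolution.} Let $v-\phi$ attain a local maximum at $(x_0,t_0)$ with $t_0>0$. For $q\in\R^n$ and small $h>0$, use in (\ref{eq:DPP}) with $s=t_0-h$ a curve that is near-optimal for $(x_0-hq,t_0-h)$ on $[0,t_0-h]$ and equals $\gamma(r)=x_0+(r-t_0)q$ on $[t_0-h,t_0]$. Gluing gives
\[
v(x_0,t_0)\le \int_{t_0-h}^{t_0}L(\gamma(r),q)\,dr+v(x_0-hq,t_0-h).
\]
(Equivalently, restrict the infimum in (\ref{eq:DPP}) to $\theta\le s$.) Replacing $v$ by $\phi$ through the local-maximum property, dividing by $h$ and letting $h\to0$ yields $\phi_t+D\phi\cdot q-L(x_0,q)\le0$. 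Taking the supremum over $q$ gives $\phi_t+H(x_0,D\phi)\le0$. Combined with $v\le\psi$, this is (\ref{def:visc-subsol}).

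\emph{Supersolution.} Let $v-\phi$ attain a local minimum at $(x_0,t_0)$. If $v(x_0,t_0)\ge\psi(x_0)$ there is nothing to prove. Otherwise $v(x_0,t_0)<\psi(x_0)$, and Proposition \ref{thm:ex-DPP} gives a pure DPP without stopping on $(s_0,t_0]$. Suppose for contradiction that $\phi_t+H(x_0,D\phi)<-\delta<0$ at $(x_0,t_0)$. By continuity, $\phi_t+D\phi\cdot q-L(x,q)<-\delta$ for all $q$ and all $(x,t)$ near $(x_0,t_0)$. Fix $h$ small, so that $t_0-h>s_0$, and take an $\epsilon h$-optimal curve $\gamma$ in the pure DPP. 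The quadratic growth keeps $\gamma$ inside the neighbourhood on $[t_0-h,t_0]$: its excursion is $O(\sqrt h)$, since its cost is bounded by $v(x_0,t_0)-v(\gamma(t_0-h),t_0-h)+\epsilon h$ and $v$ is bounded below locally. Integrating $\frac{d}{dr}\phi(\gamma(r),r)\le L(\gamma,\dot\gamma)-\delta$ along $\gamma$ and using the local-minimum property then gives $v(x_0,t_0)\le v(x_0,t_0)+\epsilon h-\delta h$, a contradiction once $\epsilon<\delta$.

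The main obstacle is the supersolution step: controlling the excursion of near-optimal curves uniformly and ensuring that the stopping time stays away from $t_0$. The latter is exactly the content of Proposition \ref{thm:ex-DPP}. Establishing continuity of $v$ (or bypassing it with semicontinuous envelopes) is the secondary technical point.
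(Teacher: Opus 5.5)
Your proposal is correct and follows essentially the paper's route. The obstacle bound comes from the constant path with $\theta=t_0$, and the subsolution inequality from the DPP along a fixed-velocity curve (the paper uses arbitrary $C^1$ curves, which amounts to the same thing); the supersolution inequality off the contact set uses Proposition \ref{thm:ex-DPP}, near-optimal curves with $O(\sqrt{h})$ excursion from the quadratic growth, and continuity of $\phi_t+H(x,D\phi)$, which you phrase as a contradiction with an $\epsilon h$-optimal curve where the paper takes the supremum over near-optimal curves directly.
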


\begin{proof}
    We start from showing 
    that $v$ is a viscosity subsolution to (\ref{eq:Bellman-eq}).
    Assume that $v-\phi$ takes its maximum at 
    $(x_0,t_0)\in\R^n\times(0,\infty)$
    for $\phi\in C^1(\R^n\times (0,\infty))$.
    By the definition of $v(x,t)$,
    it directory follows that
    \begin{align}
        v(x_0,t_0)\leq J(x_0,t_0,x_0(\cdot),t_0) = \psi(x_0),\label{eq:Bellman-eq-1}
    \end{align}
    where $x_0(\cdot)\in\mathcal{A}(x_0,t_0)$ is a constant path $x_0(s)\equiv x_0$.
    To prove the inequality
    $\phi_t(x_0,t_0) + H(x_0,D\phi(x_0,t_0))\leq0$,
    take $\gamma\in\mathcal{A}(x_0,t_0)\cap C^1([0,t_0];\R^n)$ and $s\in[0,t_0)$ arbitrarily.
    Since $(v-\phi)(x_0,t_0)=\max(v-\phi)$,
    we have
    \begin{align}
        v(x_0,t_0)-v(\gamma(s),s)
        &\geq \phi(x_0,t_0)-\phi(\gamma(s),s)\notag\\
        &=\int_s^{t_0} \left\{
            \phi_t(\gamma(r),\dot{\gamma}(r))+D\phi(\gamma(r),r)\cdot \dot{\gamma}(r)
        \right\}dr.
        \label{eq:Bellman-eq-2}
    \end{align}
    Proposition \ref{thm:DPP} implies
    \begin{align}
        v(x_0,t_0)
        \leq \int_s^{t_0} L(\gamma(r),\dot{\gamma}(r))dr 
        + v(\gamma(s),s).
        \label{eq:Bellman-eq-3}
    \end{align}
    Combining (\ref{eq:Bellman-eq-2}) and (\ref{eq:Bellman-eq-3}), and dividing by $t_0-s$, we have
    \begin{align*}
        0\geq \frac{1}{t_0-s}\int_s^{t_0} 
        \{\phi_t(\gamma(r),r) + D\phi(\gamma(r),r)\cdot\dot{\gamma}(r)-L(\gamma(r),\dot{\gamma}(r))\}dr.
    \end{align*}
    Then, we letting $s\uparrow t_0$,
    it follows that 
    \begin{align*}
        0\geq \phi_t(x_0,t_0)+D\phi(x_0,t_0)\cdot\dot{\gamma}(t_0)-L(x_0,\dot{\gamma}(t_0)).
    \end{align*}
    Since this inequality holds for all $C^1$ admissible path $\gamma$, we can conclude that
    \begin{align}
        0\geq 
        \phi_t(x_0,t_0)+
        \sup_{q\in\R^n}\{D\phi(x_0,t_0)\cdot q -L(x_0,q)\}
        =\phi_t(x_0,t_0)+H(x_0,D\phi(x_0,t_0)).
        \label{eq:Bellman-eq-4}
    \end{align}
    Hence, by (\ref{eq:Bellman-eq-1}) and (\ref{eq:Bellman-eq-4}), $v$ satisfies
    \begin{align*}
        \max\{v_t+H(x,Dv),\ v-\psi(x)\}\leq 0
    \end{align*}
    in the sense of viscosity solutions.

    Next, we show that $v$ is a viscosity supersolution to (\ref{eq:Bellman-eq}).
    Assume that $v-\phi$ takes its minimum at $(x_0,t_0)\in\R^n\times (0,\infty)$
    for $\phi\in C^1(\R^n\times(0,\infty))$.
    If $v(x_0,t_0)= \psi(x_0)$, there is nothing to prove.
    Suppose $v(x_0,t_0)<\psi(x_0)$.
    By Proposition \ref{thm:ex-DPP},
    we can take $s_0\in [0,t_0)$ such that
    \begin{align*}
        v(x_0,t_0)
        =\inf_{\gamma\in\mathcal{A}(x_0,t_0)}
        \left\{
        \int_{s_0}^{t_0} L(\gamma(r),\dot{\gamma}(r))dr
        +v(\gamma(s_0),s_0)
        \right\}.
    \end{align*}
    Let
    \begin{align*}
        \widetilde{\mathcal{A}}(x_0,t_0)
        :=\left\{\gamma\in\mathcal{A}(x_0,t_0)
        \; : \; 
        \int_{s_0}^{t_0}L(\gamma(r),\dot{\gamma}(r))dr
        +v(\gamma(s_0),s_0) < v(x_0,t_0)+1
        \right\}.
    \end{align*}
Then, by Proposition \ref{thm:ex-DPP} and standard computations for value functions
without stopping time,
we have 
\begin{align}
    v(x_0,t_0)
    &=\inf_{\gamma\in\widetilde{\mathcal{A}}(x_0,t_0)}
    \left\{
    \int_{s_0}^{t_0} L(\gamma(r),\dot{\gamma}(r))dr + v(\gamma(s_0),s_0)
    \right\}\notag\\
    &=\inf_{\gamma\in\widetilde{\mathcal{A}}(x_0,t_0)}
    \left\{
    \int_{s}^{t_0} L(\gamma(r),\dot{\gamma}(r))dr + v(\gamma(s),s)
    \right\}\label{eq:Bellman-eq-5}
\end{align}
for all $s\in[s_0,t_0]$.
Since $(v-\phi)(x_0,t_0)=\min(v-\phi)$,
for all $\gamma\in\widetilde{\mathcal{A}}(x_0,t_0)$
and $s\in[s_0,t_0]$,
it follows that
\begin{align*}
    v(x_0,t_0)-v(\gamma(s),s)
    \leq \phi (x_0,t_0) - \phi(\gamma(s),s)
    =\int_{s}^{t_0} \left\{
    \phi_t(\gamma(r),r)+D\phi(\gamma(r),r)\cdot \dot{\gamma}(r)
    \right\}dr,
\end{align*}
and that
\begin{align*}
    &v(x_0,t_0)- 
    \left\{
    \int_{s}^{t_0}L(\gamma(r),\dot{\gamma}(r))dr
    +v(\gamma(s),s)\right\}\notag\\
    &\leq 
    \int_{s}^{t_0} \left\{
    \phi_t(\gamma(r),r)+H(\gamma(r),D\phi(\gamma(r),r)
    \right\}dr.
\end{align*}
Applying (\ref{eq:Bellman-eq-5}) and taking $\sup_{\gamma\in\widetilde{\mathcal{A}}(x_0,t_0)}$,
we have 
\begin{align}
    0\leq \sup_{\gamma\in\widetilde{\mathcal{A}}(x_0,t_0)}
    \left\{
    \int_{s}^{t_0} \left\{
    \phi_t(\gamma(r),r)+H(\gamma(r),D\phi(\gamma(r),r)
    \right\}dr
    \right\}.\label{eq:Bellman-eq-8}
\end{align}
Here, each $\gamma\in\widetilde{\mathcal{A}}(x_0,t_0)$
has a uniform continuity estimate, that is,
there exists $C>0$ such that
\begin{align}
    |x_0 -\gamma (s)|\leq C(t_0-s)^{1/2}\quad
    \text{for all } \gamma\in \widetilde{\mathcal{A}}(x_0,t_0)
    \text{ and } s\in[s_0,t_0].\label{eq:Bellman-eq-6}
\end{align}
Indeed,
(\ref{assump:L-quad-growth}) implies
\begin{align*}
    &|x_0 - \gamma(s)|
    \leq \int_{s}^{t_0} |\dot{\gamma}(r)|dr
    \leq (t_0-s)^{1/2}\left(
        \int_{s}^{t_0} |\dot{\gamma}(r)|^2dr
    \right)^{1/2}\\
    &\leq (t_0-s)^{1/2}\left(
        \int_{s_0}^{t_0}|\dot{\gamma}(r)|^2dr
    \right)^{1/2}
    \leq (t_0-s)^{1/2}\left(
        \int_{s_0}^{t_0} (2L(\gamma(r),\dot{\gamma}(r)) +K_0)dr
    \right)^{1/2}\\
    &\leq (t_0-s)^{1/2}
    \left(
        2 \left\{
            v(x_0,t_0)+1-\inf_{z\in\R^n,r\geq 0}v(z,r)
        \right\} +K_0(t_0-s_0)
    \right)^{1/2}
    \leq C (t_0-s)^{1/2}
\end{align*}
for all $\gamma \in \widetilde{\mathcal{A}}(x_0,t_0)$
and $s\in[s_0,t_0]$.
Moreover,
by the continuity of 
$(x,t)\mapsto\phi_t +H(x,D\phi)$,
for every $\lambda >0$,
there exists $s_1\in[s_0,t_0]$ such that 
\begin{align}
    |\phi_t(z,r)+H(z,D\phi(z,r))-\phi_t(x_0,t_0)-H(x_0,D\phi(x_0,t_0))|
    <\lambda \label{eq:Bellman-eq-7}
\end{align}
for all $r\in[s_1,t_0]$ and $|z-x_0|\leq C(t_0-r)^{1/2}$.
From (\ref{eq:Bellman-eq-6}) and (\ref{eq:Bellman-eq-7}),
for all $\lambda>0$, if $s$ is close enough to $t_0$,
it holds that
\begin{align*}
    \int_s^{t_0} \{\phi_t(\gamma(r),r)
    +H(\gamma(r),D\phi(\gamma(r),r))\}dr
    \leq \int_s^{t_0} \{\phi_t(x_0,t_0)+H(x_0,D\phi(x_0,t_0))+\lambda\}dr
\end{align*}
for all $\gamma\in\widetilde{\mathcal{A}}(x_0,t_0)$.
By (\ref{eq:Bellman-eq-8}),
dividing both sides by $t_0-s$ and taking $\sup_{\gamma\in\widetilde{\mathcal{A}}(x_0,t_0)}$,
we have 
\begin{align*}
    0&\leq \frac{1}{t_0-s} \int_s^{t_0}
    \{\phi_t(x_0,t_0)+H(x_0,D\phi(x_0,t_0))+\lambda\}dr\\
    &= \phi_t(x_0,t_0)+H(x_0,D\phi(x_0,t_0))+\lambda.
\end{align*}
Finally, letting $\lambda \to +0$, we obtain
$\phi_t(x_0,t_0) + H(x_0, D\phi(x_0,t_0)) \geq 0.$
In conclusion,
$v$ satisfies
\begin{align*}
    \max\{v_t+H(x,Dv),v-\psi(x)\} \geq 0
\end{align*}
in the sense of viscosity solutions.
\end{proof}

Applying Proposition \ref{thm:Bellman-eq} to (\ref{OP}),
we have optimal control formulas for our original problem (\ref{OP}) and effective equation (\ref{effOP}).

For notational simplicity,
we define functions $f_\varepsilon, f: \R^n \times [0,\infty)\to\R$
by
\begin{align*}
    f_\varepsilon (x,t)=\left\{
    \begin{aligned}
        &u_0(x) &&\text{if } t=0,\\
        &\psi\left(x,\frac{x}{\varepsilon}\right)
        &&\text{if } t>0,
    \end{aligned}
    \right.
    \quad
    f(x,t)=\left\{
    \begin{aligned}
        &u_0(x) &&\text{if } t=0,\\
        &\overline{\psi}(x) &&\text{if } t>0.
    \end{aligned}
    \right.
\end{align*}
\begin{proposition}
    Assume $(\mathrm{A}\ref{assump:H-peri})$--$(\mathrm{A}\ref{assump:initial})$.
    Then, the unique viscosity solution $u^\varepsilon$ and $u$ to $($\ref{OP}$)$ and $($\ref{effOP}$)$ 
    satisfies 
    \begin{align}
    u^\varepsilon(x,t)
    = \inf_{\substack{\gamma\in\mathcal{A}(x,t)\\ \theta\in[0,t]}}
    \left\{\int_{\theta}^t L\left(
    \frac{\gamma(s)}{\varepsilon}, \dot{\gamma}(s)
    \right)ds + f_\varepsilon(\gamma(\theta),\theta)\right\}
    \label{eq:u-ep-optim-ctrl-form}
    \end{align}
    and
    \begin{align}
    u(x,t)
    =\inf_{\substack{\gamma\in\mathcal{A}(x,t)\\\theta\in[0,t]}}
    \left\{\int_\theta^t \overline{L}(\dot{\gamma}(s))ds
    +f(\gamma(\theta),\theta)\right\},
    \label{eq:u-optim-ctrl-form}
    \end{align}
    respectively.
\end{proposition}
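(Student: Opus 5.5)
The plan is to apply Proposition \ref{thm:Bellman-eq} to two explicit data sets and then identify the resulting value functions with $u^\varepsilon$ and $u$ via the uniqueness part of Proposition \ref{thm:a-priori} (or Proposition \ref{thm:comparison}).

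For (\ref{OP}), I set $\widetilde{L}(y,q):=L(y/\varepsilon,q)$, $\widetilde{H}(y,p):=H(y/\varepsilon,p)$, and $\widetilde{\psi}(x):=\psi(x,x/\varepsilon)$. After the reduction to (\ref{assump:quad-growth}), $\widetilde{L}$ satisfies (\ref{assump:L-BUC})--(\ref{assump:L-convex}); here BUC on $\R^n\times B(0,R)$ follows from periodicity and continuity. By (A\ref{assump:psi-peri})--(A\ref{assump:initial}), $\widetilde{\psi}$ is continuous and $u_0\le\overline{\psi}\le\widetilde{\psi}$. One point needs care: (\ref{assump:termial-stopping-costs}) requires $u_0$ to be bounded below. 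This holds since $u_0\in\BUC$. Legendre duality gives $\sup_q\{p\cdot q-\widetilde{L}(y,q)\}=\widetilde{H}(y,p)$, since $H$ is convex and, after modification, superlinear. So Proposition \ref{thm:Bellman-eq} shows that the right-hand side of (\ref{eq:u-ep-optim-ctrl-form}), which is exactly $v$ in (\ref{eq:value-fcn}) written with $f_\varepsilon$, is a viscosity solution of (\ref{OP}).

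To conclude $v=u^\varepsilon$, I use Proposition \ref{thm:comparison}, which needs boundedness on $[0,T)$ and Lipschitz continuity of one of the two functions. The function $u^\varepsilon$ is Lipschitz by Proposition \ref{thm:a-priori}. For boundedness of $v$: the constant path with $\theta=0$ gives $v\le u_0(x)+t\sup_y|L(y,0)|$. On the other side, $L\ge -K_0$ and the costs are bounded below, so $v\ge \min\{\inf u_0,\inf\psi\}-K_0t$. Hence $v$ is bounded on each strip $\R^n\times[0,T)$. Since $u^\varepsilon$ is also bounded there, applying comparison in both directions gives $v=u^\varepsilon$. Alternatively, $u^\varepsilon$ can be recovered directly as the Perron solution.

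The effective problem is handled the same way with $\overline{L}$ and $\overline{\psi}$. I need $\overline{H}$ continuous, convex, and coercive, with $\overline{L}$ its Legendre transform. Under (\ref{assump:quad-growth}), the cell problem yields $\tfrac12|p|^2-K_0\le\overline{H}(p)\le\tfrac12|p|^2+K_0$: test with the constant subsolution or supersolution at the max or min of the corrector. Consequently $\overline{L}$ satisfies (\ref{assump:L-quad-growth}), it is convex, and being independent of $y$ it satisfies (\ref{assump:L-BUC}). I also need $\overline{\psi}$ continuous with $\overline{\psi}\ge u_0$. Continuity follows because the minimum over $y\in[0,1]^n$ of a function continuous in $x$ is continuous, by uniform continuity on compacts. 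Then Proposition \ref{thm:Bellman-eq} and uniqueness give (\ref{eq:u-optim-ctrl-form}).

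The main obstacle is a consistency issue: the quadratic modification of $H$ must be compatible with the statement as posed under (A\ref{assump:coer}). Since Proposition \ref{thm:a-priori} gives $|Du^\varepsilon|,|u^\varepsilon_t|\le C$ uniformly in $\varepsilon$, $u^\varepsilon$ also solves the modified problem, so uniqueness for the modified problem identifies the two. The same holds for $\overline{H}$, whose values at $|p|\le C$ are unchanged because the cell problem correctors for such $p$ have gradients bounded by coercivity. I would check this carefully by choosing the modification radius $R$ larger than these bounds.
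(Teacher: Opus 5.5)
Your proposal is correct and follows the paper's route: the paper applies Proposition~\ref{thm:Bellman-eq} to the data $L(\cdot/\varepsilon,\cdot)$, $\psi(x,x/\varepsilon)$ and to $\overline{L}$, $\overline{\psi}$, and implicitly identifies the resulting value functions with $u^\varepsilon$ and $u$ by uniqueness. Your additional verifications fill in details the paper leaves implicit: the bound $\min_y H(y,p)\le\overline{H}(p)\le\max_y H(y,p)$, the continuity of $\overline{\psi}$, the boundedness of the value function on strips (needed for Proposition~\ref{thm:comparison}), and the compatibility of the formula with the quadratic modification of $H$.
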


Moreover, 
since the Hamiltonian $\overline{H}$ is homogenous,
we have the following Hopf--Lax-type formula for the solution to (\ref{effOP}).
\begin{proposition}[{\cite[Proposition 2.1]{HT25}}]
    Assume $(\mathrm{A}\ref{assump:H-peri})$--$(\mathrm{A}\ref{assump:initial})$.
    Then, the unique viscosity solution $u$ to $($\ref{effOP}$)$ satisfies
    \begin{align}
        u(x,t)
        =\inf_{\substack{
            z\in\R^n\\
            \theta\in[0,t)
        }}\left\{
          (t-\theta)\overline{L}\left(
              \frac{x-z}{t-\theta}
          \right) + f(z,\theta)
        \right\}
        \label{thm:Hopf-Lax}
    \end{align}
    for all $(x,t)\in\R^n\times (0,\infty)$.
\end{proposition}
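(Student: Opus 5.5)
We reduce the optimal-control formula \eqref{eq:u-optim-ctrl-form} to a minimization over straight segments. In \eqref{eq:u-optim-ctrl-form}, for fixed $\theta$ and fixed endpoint $z=\gamma(\theta)$, the only part of the cost that depends on $\gamma$ is $\int_\theta^t\overline{L}(\dot\gamma(s))\,ds$; the quantity $f(z,\theta)$ depends on $\gamma$ only through $\gamma(\theta)=z$. So we first show that, among absolutely continuous curves on $[\theta,t]$ from $z$ to $x$, the straight segment $\gamma(s)=z+\frac{s-\theta}{t-\theta}(x-z)$ minimizes the action. This is Jensen's inequality. Indeed, $\overline{L}$ is convex, being the Legendre transform of $\overline{H}$. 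Therefore
\[
\frac{1}{t-\theta}\int_\theta^t \overline{L}(\dot\gamma(s))\,ds\ \ge\ \overline{L}\Bigl(\frac{1}{t-\theta}\int_\theta^t\dot\gamma(s)\,ds\Bigr)=\overline{L}\Bigl(\frac{x-z}{t-\theta}\Bigr),
\]
with equality for the segment. Applying this to each admissible pair $(\gamma,\theta)$ with $\theta<t$ gives ``$\ge$'' between the right-hand side of \eqref{eq:u-optim-ctrl-form} and that of \eqref{thm:Hopf-Lax}. The reverse inequality ``$\le$'' follows by plugging in the segment, extended to $[0,t]$ by $\gamma\equiv z$ on $[0,\theta]$ (this extension does not matter, since $\int_\theta^t$ only sees $[\theta,t]$).

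Jensen's inequality requires $\overline{L}$ to be finite (or at least proper and lower semicontinuous) and $\dot\gamma\in L^1$. Under the normalization \eqref{assump:quad-growth}, $\overline{H}$ satisfies the same quadratic bounds, because $\overline{H}(p)$ lies between $\min_y H(y,p)$ and $\max_y H(y,p)$. Hence $\overline{L}$ is finite, convex, and satisfies $\overline{L}(q)\ge \frac12|q|^2-K_0$. Also, $\overline{L}\circ\dot\gamma$ is bounded below, so its integral is well defined, possibly $+\infty$, in which case the inequality is trivial.

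The remaining issue is the endpoint $\theta=t$, which is allowed in \eqref{eq:u-optim-ctrl-form} but excluded in \eqref{thm:Hopf-Lax}. Stopping at $\theta=t$ (with $t>0$) has cost $f(x,t)=\overline{\psi}(x)$. We show this value is approximated by choices with $\theta<t$: take $z=x$ and $\theta\uparrow t$, so the cost is $(t-\theta)\overline{L}(0)+\overline{\psi}(x)\to\overline{\psi}(x)$. The infimum over $\theta\in[0,t)$ therefore already sees this value. We expect this step, together with checking that $f$ jumps from $u_0$ to $\overline\psi$ at $\theta=0$ without breaking the argument (it does not, since the formula treats each $\theta$ separately), to be the only delicate points. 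The main obstacle is really just verifying the finiteness and convexity of $\overline{L}$ needed for Jensen, which comes from the convexity of $\overline{H}$ inherited from (A\ref{assump:convex}).
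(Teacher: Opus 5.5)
Your proposal is correct and follows the paper's route: straight-line competitors for the upper bound, Jensen's inequality for the convex $\overline{L}$ when the stopping time is $<t$, and the approximation $z=x$, $\theta\uparrow t$ to handle stopping at $\theta=t$ (the paper phrases the lower bound via $\delta$-optimal pairs, which is only cosmetic). Your segment $z+\frac{s-\theta}{t-\theta}(x-z)$, extended by $z$ on $[0,\theta]$, is in fact the right parametrization; the paper's printed $\gamma_{z,\theta}(s)=z+\frac{s}{t-\theta}(x-z)$ ends at $x$ only when $\theta=0$.
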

A proof of this formula was given in \cite{HT25} 
by directly verifying the dynamic programming principle
for the right-hand side of (\ref{thm:Hopf-Lax}).
Here, 
we provide an alternative proof of this representation formula based on (\ref{eq:u-optim-ctrl-form}).

\begin{proof}
    Fix $(x,t)\in \R^n\times (0,\infty)$, 
    and take $z\in\R^n$ and $\theta\in[0,t)$ arbitrarily.
    By considering the path $\gamma_{z,\theta}\in\mathcal{A}(x,t)$ defined by
    \begin{align*}
        \gamma_{z,\theta}(s)=z+\frac{s}{t-\theta}(x-z),
        \quad s\in[0,t]
    \end{align*}
    for each $z\in\R^n$ and $\theta\in[0,t)$,
    we have
    \begin{align*}
        u(x,t)
        \leq \int_\theta^t \overline{L}\left(
            \frac{x-z}{t-\theta}
        \right)ds + f(z,\theta)
        = (t-\theta)\overline{L}\left(
            \frac{x-z}{t-\theta}
        \right) + f(z,\theta)
    \end{align*}
    for all $z\in\R^n$ and $\theta\in[0,t)$.
    Then, the inequality
    \begin{align*}
        u(x,t)
        \leq\inf_{\substack{
            z\in\R^n\\
            \theta\in[0,t)
        }}\left\{
          (t-\theta)\overline{L}\left(
              \frac{x-z}{t-\theta}
          \right) + f(z,\theta)
        \right\}
    \end{align*}
    holds.
    
    On the other hand,
    for any $\delta>0$,
    let $(\gamma_\delta,\theta_\delta)\in\mathcal{A}(x,t)\times[0,t]$ be the control such that
    \begin{align*}
        \int_{\theta_\delta}^t \overline{L}\left(
            \dot{\gamma}_\delta(s)
        \right)ds + f(\gamma_\delta(\theta_\delta),\theta_\delta )
        < u(x,t) +\delta.
    \end{align*}
    If $\theta_\delta <t$,
    we have
    \begin{align*}
        \int_{\theta_\delta}^t \overline{L}\left(
            \dot{\gamma}_\delta(s)
        \right)ds
        \geq (t-\theta_\delta) \overline{L}\left(
            \frac{x-\gamma_\delta(\theta_\delta)}{t-\theta_\delta}
        \right)
    \end{align*}
    from Jensen's inequality since $\overline{L}$ is convex.
    Then, it follows that
    \begin{align*}
        u(x,t)+\delta
        &> (t-\theta_\delta) \overline{L}\left(
            \frac{x-\gamma_\delta(\theta_\delta)}{t-\theta_\delta}
        \right) + f(\gamma_\delta(\theta_\delta),\theta_\delta)\\
        &\geq \inf_{\substack{
            z\in\R^n\\
            \theta\in[0,t)
        }}\left\{
          (t-\theta)\overline{L}\left(
              \frac{x-z}{t-\theta}
          \right) + f(z,\theta)
        \right\}.
    \end{align*}
    Otherwise if $\theta_\delta=t$, 
    then it follows that
    \begin{align*}
        u(x,t)+\delta 
        &> \overline{\psi}(x)
        =\lim_{\theta\uparrow t}\left\{
            (t-\theta)\overline{L}(0)+f(x,\theta)
        \right\}\\
        &\geq \liminf_{\theta\uparrow t}
        \inf_{\substack{
            z\in\R^n
        }}\left\{
          (t-\theta)\overline{L}\left(
              \frac{x-z}{t-\theta}
          \right) + f(z,\theta)
        \right\}\\
        &\geq \inf_{\substack{
            z\in\R^n\\
            \theta\in[0,t)
        }}\left\{
          (t-\theta)\overline{L}\left(
              \frac{x-z}{t-\theta}
          \right) + f(z,\theta)
        \right\}.
    \end{align*}
    By letting $\delta\to+0$, we have
    \begin{align*}
        u(x,t)\geq \inf_{\substack{
            z\in\R^n\\
            \theta\in[0,t)
        }}\left\{
          (t-\theta)\overline{L}\left(
              \frac{x-z}{t-\theta}
          \right) + f(z,\theta)
        \right\}.
    \end{align*}
\end{proof}

We conclude this section 
by giving a uniform Lipschitz estimate of minimizing trajectory of optimal stopping problems,
which we need in our proof of Theorem \ref{thm:main-thm}.
\begin{proposition}\label{prop:lip-minimizer}
    Assume $(\mathrm{A}\ref{assump:H-peri})$--$(\mathrm{A}\ref{assump:initial})$.
    Then, for each $\varepsilon>0$ and $(x,t)$,
    there exists a minimizer $(\gamma, \theta)\in \mathcal{A}(x,t)\times[0,t]$ 
    which attains the infimum of the right-hand side of $(\ref{eq:u-ep-optim-ctrl-form})$.
    Moreover, there exists a constant $C>0$ depending only on
    $L$ and $\lVert Du_0 \rVert_{L^\infty(\R^n)}$ such that
    $|\dot{\gamma}(s)|\leq C$ for a.e. $s\in[\theta ,t ]$.
\end{proposition}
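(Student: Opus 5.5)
Existence comes from the direct method, after reducing to a fixed stopping time. Fix $\varepsilon$ and $(x,t)$, and put $V(\theta)=\inf_{\gamma\in\mathcal{A}(x,t)}J^\varepsilon(x,t,\gamma,\theta)$.

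For $\theta>0$ the stopping problem with $\theta$ fixed is a Bolza problem on $[\theta,t]$ with fixed endpoint $x$ at time $t$. Its free initial point carries the continuous cost $\psi(\cdot,\cdot/\varepsilon)$, which is bounded below. For $\theta=0$ the same holds with cost $u_0$.
\begin{itemize}
\item By the quadratic growth (\ref{assump:L-quad-growth}), a minimizing sequence of curves has bounded $H^1$ norm.
\item The initial points stay bounded, by the $L^2$ bound on $\dot\gamma$.
\item Tonelli's lower semicontinuity theorem applies since $L$ is convex in $q$ and continuous, so a minimizer $\gamma_\theta$ exists for each fixed $\theta$.
\end{itemize}

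Next I minimize over $\theta$. By Proposition \ref{thm:lsc-in-theta}, $V$ is lower semicontinuous on the compact interval $[0,t]$, so it attains its minimum at some $\theta^*$. Then $(\gamma_{\theta^*},\theta^*)$ is a minimizer of (\ref{eq:u-ep-optim-ctrl-form}), since $\inf_{\theta}V(\theta)$ equals $u^\varepsilon(x,t)$. Alternatively, one can take a minimizing sequence $(\gamma_k,\theta_k)$, extract $\theta_k\to\theta^*$, and handle the case $\theta_k\downarrow0$ by the same constant-extension trick as in that proposition. Either route works.

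For the velocity bound I would use standard regularity theory for minimizers of Tonelli Lagrangians, with one caveat. $L$ is only continuous and convex, not $C^2$, so I avoid the Euler--Lagrange equation and compare instead with the value function. On $(\theta,t]$ the curve $\gamma$ is a minimizer for the problem without stopping, with terminal cost $u^\varepsilon(\cdot,s)$ at intermediate times. Indeed, by the dynamic programming principle (Proposition \ref{thm:DPP}), restrictions of an optimal pair are optimal, so for $\theta<s_1<s_2\le t$,
\[
u^\varepsilon(\gamma(s_2),s_2)=\int_{s_1}^{s_2}L\Big(\frac{\gamma}{\varepsilon},\dot\gamma\Big)\,dr+u^\varepsilon(\gamma(s_1),s_1).
\]
Proposition \ref{thm:a-priori} gives $|u^\varepsilon_t|+|Du^\varepsilon|\le C$ uniformly in $\varepsilon$. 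Hence the left side minus $u^\varepsilon(\gamma(s_1),s_1)$ is at most $C\big(|\gamma(s_2)-\gamma(s_1)|+(s_2-s_1)\big)$. By (\ref{assump:L-quad-growth}),
\[
\int_{s_1}^{s_2}\tfrac12|\dot\gamma|^2\,dr\le C\int_{s_1}^{s_2}|\dot\gamma|\,dr+C(s_2-s_1).
\]
Dividing by $s_2-s_1$ and letting $s_2-s_1\to0$ at Lebesgue points gives $\tfrac12|\dot\gamma|^2\le C|\dot\gamma|+C$, hence $|\dot\gamma|\le C'$ a.e. on $[\theta,t]$. The constant depends only on $K_0$ and the Lipschitz bound of Proposition \ref{thm:a-priori}. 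That bound in turn depends on $C_0$ (built from $\lVert Du_0\rVert_{L^\infty}$) and on $H$, hence on $L$, and it is independent of $\varepsilon$.

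The main obstacle is justifying the identity displayed above on all of $(\theta,t]$. One must check that the restriction of an optimal $(\gamma,\theta)$ achieves $u^\varepsilon(\gamma(s),s)$ for every $s\in(\theta,t]$. This holds because $(\gamma|_{[0,s]},\theta)$ is admissible for $(\gamma(s),s)$, giving $\ge$ from the definition of $u^\varepsilon$. The reverse inequality follows from the optimality of the full pair through concatenation, as in the proof of Proposition \ref{thm:DPP}. A secondary point is that the argument needs the Lipschitz bound on $u^\varepsilon$ in $x$, not just in $t$, which Proposition \ref{thm:a-priori} supplies.
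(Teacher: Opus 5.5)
Your proposal is correct and follows the paper's proof. The paper likewise takes a minimizer for each fixed $\theta$ and uses the lower semicontinuity of $\theta\mapsto V^\varepsilon(x,t,\theta)$ to get an optimal pair. It then uses the dynamic programming identity along the optimal curve together with the $\varepsilon$-uniform Lipschitz bound and the quadratic growth of $L$. The only cosmetic difference is the limit step: you pass to the limit at Lebesgue points of the integrated inequality, whereas the paper applies Jensen's inequality to the difference quotient and lets $h\to0$ at points where $\gamma$ is differentiable.
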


\begin{proof}
    For $\varepsilon>0$,
    $(x,t)\in\R^n\times [0,\infty)$ and $\theta\in[0,t]$,
    we define
    \begin{align}
        V^\varepsilon (x,t,\theta)
        := \inf_{\gamma\in\mathcal{A}(x,t)}
    \left\{\int_{\theta}^t L\left(
    \frac{\gamma(s)}{\varepsilon}, \dot{\gamma}(s)
    \right)ds + f_\varepsilon(\gamma(\theta),\theta)\right\}.
    \label{eq:lip-minimizer-pf1}
    \end{align}
    We fix $\varepsilon$ and $(x,t)$.
    Then, 
    it follows from the general existence theorem of minimizers for action functionals that,
    for each $\theta\in[0,t]$,
    there exists a minimizer $\gamma_\theta\in\mathcal{A}(x,t)$ 
    that attains the infimum on the right-hand side of (\ref{eq:lip-minimizer-pf1});
    see, for example,
    \cite[Appendix D]{MR4328923}.
    Since $V^\varepsilon (x,t,\theta)$ is lower semicontinuous in $\theta\in[0,t]$ from Proposition \ref{thm:lsc-in-theta},
    we obtain the existence of minimizing pair $(\gamma_{0},\theta_0)\in\mathcal{A}(x,t)\times[0,t]$, which satisfies
    \begin{align*}
        u^\varepsilon (x,t)
        = \int_{\theta_0}^t L\left(
    \frac{\gamma_{0}(s)}{\varepsilon}, \dot{\gamma}_{\theta_0}(s)
    \right)ds + f_\varepsilon(\gamma_{0}(\theta_0),\theta_0).
    \end{align*}
    For each $s\in[\theta_0,t]$,
    it follows that
    \begin{align*}
        u^\varepsilon (x,t)
        &=\int_{\theta_0}^s L\left(
            \frac{\gamma_0(r)}{\varepsilon},
            \dot{\gamma}_0(r)
        \right)dr + \int_s^t L\left(
            \frac{\gamma_0(r)}{\varepsilon},
            \dot{\gamma}_0(r)
        \right)dr 
        + f_\varepsilon (\gamma_0(\theta_0),\theta_0)\\
        &\geq \int_s^t L\left(
            \frac{\gamma_0(r)}{\varepsilon},
            \dot{\gamma}_0(r)
        \right)dr
        +u^\varepsilon (\gamma_0(s),s).
    \end{align*}
    Then, by the dynamic programming principle,
    we have 
    \begin{align}
        u^\varepsilon (x,t)
        &= \int_s^t L\left(
            \frac{\gamma_0(r)}{\varepsilon},
            \dot{\gamma}_0(r)
        \right)dr
        +u^\varepsilon (\gamma_0(s),s)
        \label{eq:lip-minimizer-pf2}
    \end{align}
    for each $s\in[\theta_0,t]$.
    Evaluating (\ref{eq:lip-minimizer-pf2}) at $s$ and $s+h$
    with $s,s+h \in[\theta_0,t]$,
    and taking difference, 
    we obtain
    \begin{align*}
        u^\varepsilon (\gamma_0(s+h),s+h)
        - u^\varepsilon (\gamma_0(s),s)
        =\int_s^{s+h} L\left(
            \frac{\gamma_0(r)}{\varepsilon},
            \dot{\gamma}_0(r)
        \right)dr.
    \end{align*}
    Since $L$ satisfies (\ref{assump:L-quad-growth}),
    it follows from Jensen's inequality that
    \begin{align}
        \frac{
            u^\varepsilon (\gamma_0(s+h),s+h)- u^\varepsilon (\gamma_0(s),s)
        }{h}
        &=\frac{1}{h}\int_s^{s+h} L\left(
            \frac{\gamma_0(r)}{\varepsilon},
            \dot{\gamma}_0(r)
        \right)dr\notag \\
        &\geq \frac{1}{h}
        \int_s^{s+h} \left(
            \frac{1}{2}|\dot{\gamma}_0(r)|^2 - K_0
        \right)dr\notag \\
        &\geq \frac{1}{2}\left|
            \frac{\gamma_0(s+h)-\gamma_0(s)}{h}
        \right|^2 -K_0.
        \label{eq:lip-minimizer-pf3}
    \end{align}
    On the other hand,
    by Proposition \ref{thm:a-priori},
    there exists a constant $C_0>0$,
    depending only on $H$ and $\lVert Du_0 \rVert_{L^\infty(\R^n)}$,
    such that
    \begin{align}
        \frac{
            u^\varepsilon (\gamma_0(s+h),s+h) - u^\varepsilon (\gamma_0(s),s)
        }{h}
        \leq C_0 \left(
            \left|
                \frac{\gamma_0(s+h)-\gamma_0(s)}{h}
            \right| +1
        \right).
        \label{eq:lip-minimizer-pf4}
    \end{align}
    Combining (\ref{eq:lip-minimizer-pf3}) 
    and (\ref{eq:lip-minimizer-pf4}),
    and taking $\lim_{h\to 0}$ if $\gamma_0$ is differentiable at $s$,
    we have
    \begin{align*}
        \frac{1}{2}|\dot{\gamma}_0(s)|^2 - K_0
        \leq C_0 (|\dot{\gamma}_0(s)|+1).
    \end{align*}
    Thus, we obtain the estimate
    \begin{align*}
        |\dot{\gamma}_0(s)| \leq C_0 +\sqrt{C_0^2 +2 (K_0+C_0)}
        \quad \text{for a.e. } s\in [\theta_0,t],
    \end{align*}
    with a constant independent of $\varepsilon$ and $(x,t)$.
    This completes the proof.
\end{proof}

\section{Homogenization results}
Recall the metric function, which we introduced in Section 1.
For every $x,y\in\R^n$ and $t>0$, we define
\begin{align}\label{def:metric-fcn}
    m(t,x,y)
    :=\inf\left\{
    \int_0^t L(\eta(s),\dot{\eta}(s))ds
    \; : \; \eta\in\AC ([0,t]; \R^n),\ \eta(0)=x,\ \eta(t)=y
    \right\}.
\end{align}
The value
$m(t,x,y)$ represents the minimum cost traveling from $x$ to $y$ 
in a given time $t$.
Using (\ref{def:metric-fcn}) and the optimal control formula, 
we can caluculate the unique viscosity solution $u^\varepsilon$ to (\ref{OP}) as
\begin{align*}
    u^\varepsilon(x,t)
    &=\inf_{\substack{
        \eta\in\mathcal{A}(x/\varepsilon,t/\varepsilon)\\
        \theta\in[0,t]
    }}
    \left\{
        \varepsilon\int_{\theta/\varepsilon}^{t/\varepsilon}
            L(\eta(s),\dot{\eta}(s))
        ds + f_\varepsilon\left(
            \varepsilon\eta\left(
                \frac{\theta}{\varepsilon}
            \right), \theta
        \right)
    \right\}\\
    &=\inf_{\substack{
        z\in\R^n\\ 
        \theta\in (0,t)
    }}\left\{
        \varepsilon m\left(
            \frac{t-\theta}{\varepsilon},
            \frac{z}{\varepsilon},
            \frac{x}{\varepsilon}
        \right) +f_\varepsilon (z,\theta)
    \right\}\\
    &=\min\left\{
        \inf_{\substack{
            z\in\R^n\\
            \theta\in (0,t)
        }}\left\{
            \varepsilon m\left(
                \frac{t-\theta}{\varepsilon},
                \frac{z}{\varepsilon},
                \frac{x}{\varepsilon}
            \right) +\psi \left(
                z, \frac{z}{\varepsilon}
            \right)
        \right\},\ 
        \inf_{z\in\R^n}\left\{
            \varepsilon m\left(
                \frac{t}{\varepsilon},
                \frac{z}{\varepsilon},
                \frac{x}{\varepsilon}
            \right) +u_0(z)
        \right\}
    \right\}.
\end{align*}

\subsection{Proof of homogenization}

First, we review the following two properties for the metric function $m$.
Is is known that the metric function $m$ has the local Lipschitz property.

\begin{lemma}[\cite{MR2361998}, Theorem 3.1]\label{lem:lip-m}
    For every $M>0$,
    there exists a constant $C>0$ depending only on $n$, $L$
    and $M$ such that $m$ is Lipschitz continuous 
    on the domain
    \begin{align*}
        \{(t,x,y)\in[0,\infty)\times\R^n\times\R^n
        \; : \; |x-y|\leq Mt\}
    \end{align*}
    with $C$ being a Lipschitz bound.
\end{lemma}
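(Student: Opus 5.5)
The plan is to reduce everything to a uniform velocity bound for minimizers, and then to compare costs by explicit perturbations of a minimizer, in the spirit of the proof of Proposition \ref{prop:lip-minimizer}. I will use the normalization (\ref{assump:L-quad-growth}), the local uniform continuity (\ref{assump:L-BUC}), the convexity (\ref{assump:L-convex}) and the $\Z^n$-periodicity of $L$ in $y$. Since $L$ is convex in $q$ and bounded on $\R^n\times B(0,R)$ for each $R$, it is Lipschitz in $q$ on such sets. For Lipschitz continuity in $y$ I would first regularize $L$ (a mollification preserving (\ref{assump:L-quad-growth}) up to constants) and pass to the limit at the end, since the final constants depend only on the growth bounds.

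\textbf{Step 1: a priori velocity bound.} Fix $(t,x,y)$ with $|x-y|\le Mt$. Minimizers $\eta$ of (\ref{def:metric-fcn}) exist by the direct method. The straight segment gives $m(t,x,y)\le t(M^2/2+K_0)$, and together with $L\ge \frac12|q|^2-K_0$ this yields $\int_0^t|\dot\eta|^2\,ds\le Ct$. This is only an average bound. To upgrade it to $|\dot\eta|\le C_M$ a.e., I would use the Erdmann/energy argument. For the autonomous Lagrangian $L(y,q)$, the energy $E(s)=\dot\eta\cdot\partial_qL(\eta,\dot\eta)-L(\eta,\dot\eta)$ is constant along minimizers; this follows from inner variations, i.e. reparametrizations $s\mapsto s+\lambda\varphi(s)$, which only need convexity and the regularization. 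By coercivity, a large energy forces a large speed everywhere. Since the average of $|\dot\eta|^2$ is $\le C$, the energy is bounded by a constant depending on $M$, $K_0$ and $n$, and hence so is $|\dot\eta|$. I expect this to be the main obstacle: making the inner-variation argument rigorous with $L$ merely continuous and convex. The fallback is the smooth-approximation route combined with the stability of minimizers.

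\textbf{Step 2: Lipschitz in the endpoints.} Let $y'$ satisfy $|x-y'|\le Mt$. Set $\tau=\min\{t,1\}$ and perturb a minimizer $\eta$ for $m(t,x,y)$ by
\[
\tilde\eta(s)=\eta(s)+\varphi(s)(y'-y),
\]
where $\varphi$ is the piecewise linear ramp from $0$ at $t-\tau$ to $1$ at $t$.

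The added velocity is $|y'-y|/\tau$. If $\tau=1$ this is at most $|y'-y|$. If $\tau=t$ it is at most $2M$, because $|y'-y|\le 2Mt$. In either case $\tilde\eta$ has velocity bounded by $C_M$, by Step 1.

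The ramp lives on an interval of length $\tau\le 1$. Using the Lipschitz bounds of $L$ in $y$ and in $q$ on $\R^n\times B(0,C_M)$, the cost difference is therefore
\[
\le C\tau|y'-y|+C\tau\cdot\frac{|y'-y|}{\tau}\le C|y'-y|.
\]
This gives $m(t,x,y')\le m(t,x,y)+C|y'-y|$, and the symmetric argument gives the reverse inequality. The same construction at the initial time handles the dependence on $x$.

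\textbf{Step 3: Lipschitz in time.} For $t'$ close to $t$, I would use the time rescaling $\tilde\eta(s)=\eta(st/t')$ on $[0,t']$. Its velocity is $(t/t')\dot\eta$, so the cost changes by
\[
\int_0^{t}\Big[\tfrac{t'}{t}L\big(\eta,\tfrac{t}{t'}\dot\eta\big)-L(\eta,\dot\eta)\Big]ds .
\]
Write $\tfrac{t'}{t}L(\eta,\tfrac{t}{t'}\dot\eta)-L(\eta,\dot\eta)=(\tfrac{t'}{t}-1)L(\eta,\tfrac{t}{t'}\dot\eta)+\big[L(\eta,\tfrac{t}{t'}\dot\eta)-L(\eta,\dot\eta)\big]$. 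The first bracket has size $C|t-t'|/t$, since $L$ is bounded on bounded velocities. The second has size $C|\dot\eta|\,|t-t'|/t'$, by the Lipschitz bound of $L$ in $q$. Integrating over an interval of length $t$ gives a total change of at most $C|t-t'|$.

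Moving $t'$ may leave the cone $|x-y|\le Mt'$. To stay inside it, I would apply the estimate in the cone for $2M$, and chain short steps for large $|t-t'|$. Combining Steps 2 and 3 then gives the Lipschitz bound on the whole domain, with a constant depending only on $n$, $L$ and $M$.
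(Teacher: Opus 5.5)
The paper does not prove this lemma; it quotes it from \cite{MR2361998}. Your overall route is a standard one: a uniform speed bound for minimizers of the autonomous problem, obtained from reparametrizations (an Erdmann-type condition), followed by explicit competitors. Steps 1 and 3 are sound in outline. Neither needs any regularity of $L$ in $y$, because reparametrizations and time rescalings never move the trace of $\eta$. Moreover, for every $\xi\in\partial_qL(y,q)$, convexity together with (\ref{assump:L-quad-growth}) gives
\[
\tfrac14|q|^2-3K_0\le q\cdot\xi-L(y,q)\le |q|^2+3K_0 ,
\]
which is all your averaging argument uses. So any regularization needed in Step 1 can be done in $q$ alone.

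The genuine gap is in Step 2. Under the paper's hypotheses, $H$ and hence $L$ is only uniformly continuous in $y$. Yet your bound on the cost difference contains the term $C\tau|y'-y|$, where $C$ is the Lipschitz constant of $L$ in $y$. Mollifying does not repair this. Unless $L$ is already Lipschitz in $y$, the $y$-Lipschitz constant of the mollified Lagrangian blows up as the mollification parameter tends to $0$. So your claim that the final constants depend only on the growth bounds is false for this step, and no Lipschitz bound survives the limit.

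There is also a smaller slip. For $t\ge 1$ you take $\tau=1$, and the added velocity $|y'-y|$ can then be as large as $2Mt$. It is therefore not bounded by a constant depending only on $M$ unless you restrict to $|y'-y|\le 1$ and chain.

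Both problems disappear if the ramp length is tied to the displacement, $\tau:=\min\{t,|y'-y|\}$. The added velocity is then at most $\max\{1,2M\}$. Hence $\eta$ and $\tilde\eta$ both have speed at most $R:=C_M+\max\{1,2M\}$ on $[t-\tau,t]$, and the crude estimate
\[
m(t,x,y')-m(t,x,y)\le\int_{t-\tau}^{t}\bigl(|L(\tilde\eta,\dot{\tilde\eta})|+|L(\eta,\dot\eta)|\bigr)\,ds\le 2\tau\sup_{\R^n\times B(0,R)}|L|\le C|y'-y|
\]
uses only the boundedness of $L$ on bounded velocities. The same choice of $\tau$ at the initial endpoint handles the dependence on $x$. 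With this change, no regularization in $y$ is needed anywhere.
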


According to \cite{MR4946874}, 
$m$ has the following quantitative property for its homogenization limit.

\begin{lemma}[\cite{MR4946874}]\label{lem:homo-metric}
    For each $t>0$, $x,y\in\R^n$, 
    there exists the limit
    \begin{align}
        \overline{m}(t,x,y):=\lim_{k\to\infty} 
        \frac{1}{k}m(kt,kx,ky).
    \end{align}
    Moreover, for every $M>0$,
    there exists a constant $C>0$ depending only on $n$,
    $L$ and $M$ such that 
    \begin{align}
        |m(t,x,y)-\overline{m}(t,x,y)|\leq C
        \quad\text{on } 
        \{(t,x,y)\in[0,\infty)\times\R^n\times\R^n
        \; : \; |x-y|\leq Mt\}.
    \end{align}
\end{lemma}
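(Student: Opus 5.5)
The plan follows Tran--Yu \cite{MR4946874}: prove approximate sub- and superadditivity of $m$ up to additive constants, derive the existence of $\overline{m}$ from a Fekete-type argument, and read the $O(1)$ error off the two one-sided estimates. Throughout, fix $M>0$ and restrict to $|x-y|\le Mt$.

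\textbf{Preliminaries.} Three facts will be used repeatedly.
\begin{enumerate}[(i)]
\item Periodicity gives $m(t,x+k,y+k)=m(t,x,y)$ for $k\in\Z^n$.
\item Concatenating paths gives exact subadditivity in the form $m(t+s,x,z)\le m(t,x,y)+m(s,y,z)$.
\item The growth condition (\ref{assump:L-quad-growth}) gives $-K_0t\le m(t,x,y)\le \frac{|x-y|^2}{2t}+K_0t$, so $|m|\le Ct$ on the cone.
\end{enumerate}
Whenever I shift a curve or endpoint by a non-integer vector, I round it to a lattice vector, moving it by at most $\sqrt n$. Lemma \ref{lem:lip-m} (with $M$ enlarged slightly) then shows this costs at most a constant $C(n,L,M)$, provided $t$ is bounded below, say $t\ge1$. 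For $t\le 1$, both $m$ and $\overline{m}$ will be bounded on the cone by $C$, so the estimate there is trivial.

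\textbf{Subadditivity, existence of the limit, and the lower bound.} For $k,l\in\N$, I route a path from $(k+l)x$ to $(k+l)y$ in time $(k+l)t$ through the point $kx+ly$. The first leg has the same displacement and duration as the problem defining $m(kt,kx,ky)$, but it starts at $(k+l)x$ instead of $kx$. The second leg, from $kx+ly$ to $(k+l)y$ in time $lt$, likewise differs from the problem for $m(lt,lx,ly)$ by the translation $kx$. Using (i), I replace each translation by a lattice one at cost $O(1)$ via Lemma \ref{lem:lip-m}, which yields
\begin{equation*}
m((k+l)t,(k+l)x,(k+l)y)\le m(kt,kx,ky)+m(lt,lx,ly)+C .
\end{equation*}
Set $a_k:=m(kt,kx,ky)+C$. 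Then $(a_k)$ is subadditive and $a_k\ge -K_0kt-|C|$. Fekete's lemma gives existence of $\lim_k a_k/k=\inf_k a_k/k$, so $\overline{m}(t,x,y)$ exists. The infimum property at $k=1$ gives $\overline{m}(t,x,y)\le m(t,x,y)+C$, which is one half of the estimate.

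\textbf{Superadditivity and the upper bound.} The other half requires
\begin{equation*}
m(2t,2x,2y)\ge 2m(t,x,y)-C .
\end{equation*}
Iterating this gives $2^{-j}m(2^jt,2^jx,2^jy)\ge m(t,x,y)-C\sum_{i\ge1}2^{-i}$, and letting $j\to\infty$ gives $\overline{m}\ge m-C$.

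To prove the doubling inequality, I take a minimizer $\eta$ for $m(2t,2x,2y)$. As in Proposition \ref{prop:lip-minimizer}, its speed is bounded by a constant depending on $M$. The \emph{curve surgery} step, which I expect to be the main obstacle, is to cut $\eta$ into pieces and reassemble them into two admissible curves for the problem $(t,x,y)$, with total cost at most that of $\eta$ plus $O(1)$.

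Concretely, I apply Burago's cutting lemma \cite{MR1279391} to the curve $s\mapsto(\eta(s),s)$ in space-time. The aim is to find a bounded number of subintervals whose union has total time exactly $t$ and total displacement within $O(1)$ of $y-x$. The complementary pieces then also have time $t$ and displacement within $O(1)$ of $y-x$.

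Next, I translate each piece by a lattice vector and glue the pieces consecutively, which preserves cost by periodicity. This produces two curves of duration $t$ whose endpoints differ from $(x,y)$ by $O(1)$. Correcting the endpoints with Lemma \ref{lem:lip-m} costs $O(1)$. Each corrected curve costs at least $m(t,x,y)$, and the two together cost at most $m(2t,2x,2y)+C$, which is the doubling inequality.

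The delicate point is ensuring that the number of cuts, and hence the number of $O(1)$ corrections, is bounded independently of $t$. This is exactly what Burago's lemma provides in dimension $n+1$, and I would cite it in that form.
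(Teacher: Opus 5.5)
Your overall route is the Tran--Yu argument the paper relies on. The paper gives no proof of its own; it only sketches this strategy in the introduction. The subadditivity/Fekete half is fine up to two slips:
\begin{itemize}
\item The intermediate point must be $lx+ky$, not $kx+ly$. Then the two legs are the $(kt,kx,ky)$ and $(lt,lx,ly)$ problems translated by $lx$ and $ky$ respectively.
\item For $t<1$ you must first run Fekete with a $t$-dependent constant (cone slope $M+\sqrt n/t$) before you know $\overline m$ exists there.
\end{itemize}

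The genuine gap is in the surgery step, which you yourself flag as the main obstacle. Lattice translates of consecutive pieces do not join. After translating, the end of one piece and the start of the next differ by a vector of size up to $\sqrt n$ that is generally not in $\Z^n$. So ``gluing consecutively'' yields a discontinuous, non-admissible curve, and a repair is needed at every junction, not just at the two ends. Lemma~\ref{lem:lip-m} cannot do that repair in general either. Burago's lemma gives no lower bound on the durations of the pieces, and moving one endpoint of a piece of duration $\tau\ll 1$ by $O(1)$ leaves every fixed cone and costs of order $1/\tau$. As written, the doubling inequality is therefore not established.

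A repair that never glues curves goes as follows.
\begin{enumerate}
\item Take red pieces $[a_i,b_i]$ with $\tau_i=b_i-a_i$ and $d_i=\eta(b_i)-\eta(a_i)$. Burago's lemma applied to $s\mapsto(\eta(s),s)$ in $\R^{n+1}$ gives $\sum\tau_i=t$ and $\sum d_i=y-x$ exactly.
\item Put $p_0=x$ and $p_i=p_{i-1}+d_i$, so the last point is $y$. Concatenation gives $m(t,x,y)\le\sum_i m(\tau_i,p_{i-1},p_i)$.
\item Choose $z_i\in\Z^n$ with $w_i:=\eta(a_i)+z_i-p_{i-1}\in[0,1)^n$. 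Periodicity gives $\int_{a_i}^{b_i}L(\eta,\dot\eta)\,ds\ge m(\tau_i,p_{i-1}+w_i,p_i+w_i)$, so both endpoints are shifted by the \emph{same} vector.
\item If $\tau_i\le 1$, use the two-sided bound $\frac{|d|^2}{2\tau}-K_0\tau\le m(\tau,a,a+d)\le \frac{|d|^2}{2\tau}+K_0\tau$. This is your (iii) in a sharper form. It shows the common shift changes $m$ by at most $2K_0$.
\item If $\tau_i\ge 1$, the speed bound $|d_i|\le C_M\tau_i$ keeps all relevant points in the cone of slope $C_M+\sqrt n$. Lemma~\ref{lem:lip-m} then bounds the change by $C\sqrt n$.
\item Summing over the boundedly many red pieces gives $m(t,x,y)\le\sum_{\mathrm{red}}\int L\,ds+C$. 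The same holds for the blue pieces, and adding yields $2m(t,x,y)\le m(2t,2x,2y)+C$.
\end{enumerate}
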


Thanks to the theory of periodic homogenization and Hopf--Lax formula for usual Cauchy problems,
we have 
\begin{align}
    \overline{m}(t,x,y)=t\overline{L}\left(
    \frac{y-x}{t}\right).
\end{align}
Then, the unique viscosity solution $u$ to (\ref{effOP})
has the formula
\begin{align*}
    u(x,t)&=\inf_{\substack{z\in\R^n\\ \theta\in[0,t)}}
    \left\{
    (t-\theta)\overline{L}\left(\frac{x-z}{t-\theta}\right)
    +f(z,\theta)
    \right\}\\
    &=\min\left\{
    \inf_{\substack{z\in\R^n\\ \theta\in (0,t)}}
    \left\{\overline{m}\left(t-\theta,
    z,
    x\right)
    +\overline{\psi} \left(z\right)\right\},\ 
    \inf_{z\in\R^n}\left\{
    \overline{m}\left(t,
    z,x\right)
    +u_0(z)\right\}
    \right\}.
\end{align*}

We now prove Theorem \ref{thm:main-thm}.

\begin{proof}[Proof of Theorem \ref{thm:main-thm}]

We start from proving that
there exists a constant $C>0$
such that the inequality
\begin{align}
    \inf_{\substack{x\in\R^n\\t\in[0,\infty)}}\left\{
    u^\varepsilon(x,t)-u(x,t)
    \right\}
    \geq -C\varepsilon\label{eq:3.2-6}
\end{align}
holds for every $\varepsilon>0$.
Take $\varepsilon >0$ and $(x,t)\in \R^n\times [0,\infty)$,
then there exists $z_\varepsilon,\theta_\varepsilon$ satisfying
\begin{align*}
    u^\varepsilon (x,t) 
    = \varepsilon m\left(
    \frac{t-\theta_\varepsilon}{\varepsilon},
    \frac{z_\varepsilon}{\varepsilon},
    \frac{x}{\varepsilon}\right)
    + f_\varepsilon (z_\varepsilon,\theta_\varepsilon).
\end{align*}
By Proposition \ref{prop:lip-minimizer},
we have $(z_\varepsilon ,\theta_\varepsilon)=(x,t)$ or that
\begin{align*}
    \frac{|x-z_\varepsilon|}{t-\theta_\varepsilon}
    = \frac{|x/\varepsilon - z_\varepsilon /\varepsilon|}{(t-\theta_\varepsilon)/\varepsilon}
    \leq C_0
\end{align*}
for a constant $C_0>0$ depending only on $L$ 
and $\lVert Du_0\rVert_{L^\infty (\R^n)}$.
Hence, applying Lemma \ref{lem:homo-metric}
and noting that $f_\varepsilon\geq f$ by its definition,
we obtain
\begin{align*}
    u^\varepsilon (x,t)
    =\varepsilon m\left(
    \frac{t-\theta_\varepsilon}{\varepsilon},
    \frac{z_\varepsilon}{\varepsilon},
    \frac{x}{\varepsilon}\right)
    + f_\varepsilon (z_\varepsilon,\theta_\varepsilon)
    &\geq \overline{m}(t-\theta_\varepsilon,z_\varepsilon,x)
    -C\varepsilon + f(z_\varepsilon,\theta_\varepsilon)\\
    &\geq u(x,t) -C\varepsilon.
\end{align*}
Thus, (\ref{eq:3.2-6}) follows.

Next, we prove that
\begin{align}
    \limsup_{\varepsilon\to +0} 
    \sup\left\{
    u^\varepsilon(x,t)-u(x,t)
    \; : \; (x,t)\in B(0,R) \times [0,T]\right\}
    \leq 0 \label{eq:3.2-5}
\end{align}
for each $R,T>0$.
Take $(x,t)\in B(0,R) \times [0,T]$ arbitrarily,
then there exists $z,\theta$ satisfying
\begin{equation*}
    u (x,t) = \overline{m}(t-\theta, z, x) + f(z,\theta).
\end{equation*}
Similarly, by Proposition \ref{prop:lip-minimizer},
it follows that $|x-z|\leq C_0 (t-\theta)$ for a constant $C_0>0$ depending only on $L$ 
and $\lVert Du_0\rVert_{L^\infty (\R^n)}$.
\begin{customdefinition}{Case 1}
    The case that $\theta =0$.
\end{customdefinition}
Since $|x-z|\leq C_0 (t-\theta)=C_0 t$,
applying Lemma \ref{lem:homo-metric},
we have
    \begin{align}
        u^\varepsilon (x,t)
        &\leq \varepsilon m\left(
        \frac{t}{\varepsilon},\frac{z}{\varepsilon},
        \frac{x}{\varepsilon} \right) + u_0 (z)
        \leq \overline{m}(t,z,x) + C\varepsilon +u_0(z)
        = u(x,t) + C\varepsilon\label{eq:3.2-2}
    \end{align}
    for each $\varepsilon >0$.
\begin{customdefinition}{Case 2}
    The case that $0<\theta \leq t$.
\end{customdefinition}
For each $\varepsilon >0$, it follows that
\begin{align}\label{eq:3.2-1}
        u(x,t)
        &= \overline{m}(t-\theta, z, x) + \psi\left(
        z, \frac{\widetilde{z}}{\varepsilon}
        \right)
        \geq \varepsilon m\left(
        \frac{t-\theta}{\varepsilon}, \frac{z}{\varepsilon},
        \frac{x}{\varepsilon}
        \right) 
        -C\varepsilon +\psi \left(
        z, \frac{\widetilde{z}}{\varepsilon}
        \right),
    \end{align}
    where we let $\widetilde{z}\in\R^n$ be a point satisfying
    \begin{align*}
        \frac{z-\widetilde{z}}{\varepsilon}\in \left[
        -\frac{1}{2},\frac{1}{2}
        \right]^n,
        \quad \overline{\psi}(z) = \psi\left(
        z, \frac{\widetilde{z}}{\varepsilon}\right).
    \end{align*}
Then, we consider following two cases:
\begin{itemize}
    \item For $\varepsilon >0$
    with $\varepsilon \geq t-\theta$,
    since $|x-z|\leq C_0|t-\theta|\leq C_0\varepsilon$,
    we have
    \begin{align*}
        u^\varepsilon(x,t)
        &\leq u^\varepsilon (\widetilde{z},t)
            +\lVert 
                Du^\varepsilon
            \rVert_{L^\infty(\R^n\times [0,\infty))}
            (|x-z|+|z-\widetilde{z}|)\\
        &\leq \psi\left(
            \widetilde{z},\frac{\widetilde{z}}{\varepsilon}
        \right)
        +\lVert 
            Du^\varepsilon
        \rVert_{L^\infty(\R^n\times [0,\infty))}
        (C_0 +\sqrt{n})\varepsilon
        \leq \psi\left(
            \widetilde{z},\frac{\widetilde{z}}{\varepsilon}
        \right) + C\varepsilon,
    \end{align*}
    and by Jensen's inequality, we have
    \begin{align*}
        \varepsilon m\left(
            \frac{t-\theta}{\varepsilon}, 
            \frac{z}{\varepsilon},
            \frac{x}{\varepsilon}
        \right)
        &\geq
        \inf_{
            \substack{
                \eta\in\mathcal{A}(x/\varepsilon,(t-\theta)/\varepsilon)\\
                \eta(0)=z/\varepsilon
            }
        }
        \left\{
            \varepsilon\int_0^{\frac{t-\theta}{\varepsilon}}\left(
                \frac{1}{2}|\dot{\eta}(s)|^2 - K_0
            \right)ds
        \right\}\\
        &\geq (t-\theta)\left(
            \frac{1}{2}\left|
                \frac{x-z}{t-\theta}
            \right|^2 -K_0
        \right)
        \geq -(t-\theta)K_0 
        \geq -K_0 \varepsilon.
    \end{align*}
    Substituting them into (\ref{eq:3.2-1}),
    we have
    \begin{align}
        u(x,t)
        &\geq -K_0\varepsilon -C\varepsilon +\psi\left(\widetilde{z},\frac{\widetilde{z}}{\varepsilon}\right)
        -\left|
        \psi\left(z,\frac{\widetilde{z}}{\varepsilon}\right)
        -\psi\left(\widetilde{z},\frac{\widetilde{z}}{\varepsilon}\right)
        \right|\notag \\
        &\geq u^\varepsilon (x,t)
        -C\varepsilon - \omega_{R+C_0 T +1} (\sqrt{n}\varepsilon).\label{eq:3.2-3}
    \end{align}
    Here, $\omega_r(\cdot)$ for $r>0$ denotes a modulus of continuity of $\psi$ on $B(0,r)\times \R^n$.
    \item For $\varepsilon>0$ 
    with $\varepsilon < t-\theta$,
    it follows that
    \begin{align*}
        \left|
        \frac{x}{\varepsilon}-\frac{\widetilde{z}}{\varepsilon}
        \right|
        \leq \frac{|x-z|}{\varepsilon}
        +\frac{|z-\widetilde{z}|}{\varepsilon}
        \leq (C_0 + \sqrt{n})\frac{t-\theta}{\varepsilon}.
    \end{align*}
    Then, 
    by Lemma \ref{lem:lip-m},
    we have 
    \begin{align*}
        \left|m\left(
        \frac{t-\theta}{\varepsilon}, \frac{z}{\varepsilon},
        \frac{x}{\varepsilon}
        \right)
        -m\left(
        \frac{t-\theta}{\varepsilon}, 
        \frac{\widetilde{z}}{\varepsilon},
        \frac{x}{\varepsilon}
        \right)\right|
        \leq C\frac{|z-\widetilde{z}|}{\varepsilon}
        \leq C.
    \end{align*}
    Thus, by (\ref{eq:3.2-1}), we have
    \begin{align}
        u(x,t)
        &\geq \varepsilon m\left(
        \frac{t-\theta}{\varepsilon}, 
        \frac{\widetilde{z}}{\varepsilon},
        \frac{x}{\varepsilon}
        \right)
        -C\varepsilon 
        +\psi\left(\widetilde{z},\frac{\widetilde{z}}{\varepsilon}\right)
        -\left|
        \psi\left(z,\frac{\widetilde{z}}{\varepsilon}\right)
        -\psi\left(\widetilde{z},\frac{\widetilde{z}}{\varepsilon}\right)
        \right|\notag \\
        &\geq u^\varepsilon (x,t)
        -C\varepsilon 
        -\omega_{R+C_0 T +1} (\sqrt{n}\varepsilon).\label{eq:3.2-4}
    \end{align}
\end{itemize}

From (\ref{eq:3.2-2}), (\ref{eq:3.2-3}) and (\ref{eq:3.2-4}),
we conclude (\ref{eq:3.2-5}).
Moreover, if $\psi\in\Lip (\R^n\times\R^n)$,
we immediately obtain that
$\lVert u^\varepsilon - u\rVert_{L^\infty (\R^n\times[0,\infty))}\leq C \varepsilon$.
\end{proof}

\subsection{Examples}
\begin{example}[Optimality of the convergence rate $O(\varepsilon)$]
Using the example for a standard Cauchy problem that attains the optimal convergence rate $O(\varepsilon)$,
introduced in \cite{MR3951696},
we can readily construct an example in our setting that attains the convergence rate $O(\varepsilon)$ by adding a suitable obstacle function.
This shows that $O(\varepsilon)$ is also the optimal convergence rate in our setting.

Let $n=1$ and $V\in C(\R)$ be the $\Z$-periodic function
satisfying
\begin{align*}
    \min_{\R} V=0
    \quad \text{and} \quad
    V\geq 1 \text{ on } 
    \left[-\frac{1}{3},\frac{1}{3}\right].
\end{align*}
Then, we define $H(y,p)=\frac{1}{2}|p|^2 - V(y)$
and consider the obstacle problem:
\begin{align}
    \left\{
        \begin{aligned}
            &\max\left\{
                u^\varepsilon_t+\frac{1}{2}|u^\varepsilon_x|^2-V\left(\frac{x}{\varepsilon}\right),\ 
                u^\varepsilon - M
            \right\}
            =0 &&\text{in } \R\times(0,\infty),\\
            &u^\varepsilon (x,0)=0
            &&\text{on } \R,
        \end{aligned}
    \right.
    \label{eq:optim-ex}
\end{align}
with a large constant obstacle function $M>0$
such that $M>\max_\R V +2$.
In this setting,
we have $L(y,q)=\frac{1}{2}|q|^2 + V(y)$ and
that the unique viscosity solution $u^\varepsilon$ to (\ref{eq:optim-ex}) can be written as
\begin{align}
    u^\varepsilon(x,t)
    =\inf_{\substack{
        \eta\in\mathcal{A}(x/\varepsilon,t/\varepsilon)\\
        \theta\in[0,t]
    }}\left\{
        \varepsilon \int_{\theta/\varepsilon}^{t/\varepsilon}
        \left(
            \frac{1}{2}|\dot{\eta}(s)|^2+V(\eta(s))
        \right)ds
        +\left\{
            \begin{aligned}
                &0 &&\text{if } \theta=0,\\
                &M &&\text{if } \theta\in (0,t]
            \end{aligned}
        \right.
    \right\}.
    \label{eq:ctrl-form-optim-ex}
\end{align}
Here, 
for any small $\varepsilon>0$,
we claim that the infimum in the right-hand side of (\ref{eq:ctrl-form-optim-ex}) can be attained 
only at pairs $(\eta,\theta)$ with $\theta=0$.
Indeed, 
for any $(x,t)$ with $t\leq 1$,
we have 
\begin{align}
    u^\varepsilon (x,t)
    \leq \varepsilon \int_0^{t/\varepsilon} 
    V\left(\frac{x}{\varepsilon}\right)ds
    =t V\left(\frac{x}{\varepsilon}\right)
    \leq \max_\R V,
\end{align}
by considering a control $(\eta,\theta)$ such that
$\eta(s)\equiv x$ and $\theta=0$.
Otherwise, for any $(x,t)$ with $t>1$,
we consider the control $(\eta,\theta)$ such that
$\theta=0$ and
\begin{align*}
    \eta(s)=\left\{
        \begin{aligned}
            &\frac{\widetilde{x}}{\varepsilon}
            &&\text{if } 0\leq s \leq \frac{t}{\varepsilon}-1,\\
            &\frac{\widetilde{x}}{\varepsilon}
            +\left(
                s-\frac{t}{\varepsilon}+1
            \right)
            \frac{x-\widetilde{x}}{\varepsilon}
            &&\text{if }\frac{t}{\varepsilon}-1\leq s \leq\frac{t}{\varepsilon},
        \end{aligned}
    \right.
\end{align*}
where $\widetilde{x}\in\R$ is a point satisfying
\begin{align*}
    \frac{x-\widetilde{x}}{\varepsilon}\in 
    \left[-\frac{1}{2},\frac{1}{2}\right]
    \quad\text{and}\quad
    V\left(\frac{\widetilde{x}}{\varepsilon}\right)
    =\min_\R V \ (=0).
\end{align*}
Since $V(\eta(s))= 0$ for $s\in[0,\frac{t}{\varepsilon}-1]$, we have 
\begin{align*}
    u^\varepsilon (x,t)
    \leq \varepsilon \int_{\frac{t}{\varepsilon}-1}^{\frac{t}{\varepsilon}} \left(
        \frac{1}{2}\left|
            \frac{x-\widetilde{x}}{\varepsilon}
        \right|^2 +\max_\R V
    \right)ds 
    \leq\varepsilon \left(
        \frac{1}{2}+\max_\R V
    \right),
\end{align*}
and thus, we obtain $u^\varepsilon (x,t)< \max_\R V +1$
for every $(x,t)$.
On the other hand,
for any control $(\eta,\theta)$ with $\theta>0$,
the value of the cost funtional in the right-hand side of (\ref{eq:ctrl-form-optim-ex}) is always $\geq M$ 
since $V \geq 0$ in $\R$.
Moreover, since $M>\max_\R V + 2$ by its definition,
it holds that $u^\varepsilon$ satisfies
\begin{align}
    u^\varepsilon (x,t)
    =\inf_{\eta\in\mathcal{A}(x/\varepsilon,t/\varepsilon)}\left\{
        \varepsilon \int_{0}^{t/\varepsilon}
        \left(
            \frac{1}{2}|\dot{\eta}(s)|^2+V(\eta(s))
        \right)ds
    \right\}
    \label{eq:ctrl-form-optim-ex-2}
\end{align}
for every $(x,t)$.
It is shown in \cite[Proposition 4.3]{MR3951696}
that the function $u^\varepsilon$ of the form (\ref{eq:ctrl-form-optim-ex-2}) converges to $0$ on $\R\times[0,\infty)$ locally uniformly
as $\varepsilon\to +0$ and satisfies
$u^\varepsilon (0,1)\geq \frac{\varepsilon}{6}$
for any small $\varepsilon>0$.
Thus, the convergence rate of $u^\varepsilon$ to (\ref{eq:optim-ex}) is exactly $O(\varepsilon)$,
and no faster rate can be achieved.
\end{example}
\begin{example}
Next, we present an example that explains 
the convergence rate 
\begin{align*}
    \lVert u^\varepsilon -u\rVert_{L^\infty(\R^n\times[0,\infty))}= O(\varepsilon)
\end{align*}
in Theorem \ref{thm:main-thm} may fail to be held
if we do not assume Lipschitz continuity of an obstacle $\psi$.

For $\alpha\in(0,1)$, consider the obstacle problem:
\begin{align}\label{eq:OP-ex}
    \left\{
    \begin{aligned}
        &\max\left\{
        u^\varepsilon_t + |u^\varepsilon_x| -1,\ 
        u^\varepsilon - |\sin\pi x|^\alpha 
        - \left| \cos\frac{\pi x}{\varepsilon} \right|^\alpha
        \right\}=0
        &&\text{in }\R\times(0,\infty),\\
        &u^\varepsilon(x,0) = 0
        &&\text{on } \R.
    \end{aligned}
    \right.
\end{align}
In this case, we let $H(y,p)=H(p)=|p|-1$ and 
$\psi(x,y)=|\sin\pi x|^\alpha+|\cos \pi y |^\alpha$,
which is not Lipschitz but $\alpha$-H\"{o}lder continuous.
Here, 
we show a subsequence $\{u^{\varepsilon_k}\}$, $\varepsilon_k=1/k$, $k\in\N$, of unique viscosity solutions to (\ref{eq:OP-ex}) satisfies,
for its local uniform limit $u$
and a constant $c>0$,
\begin{align}
    u^{\varepsilon_k} \left(
        \frac{1}{2}, 2
    \right)
    -u\left(
        \frac{1}{2}, 2
    \right)
    \geq \frac{c}{k^\alpha}
    \label{eq:conv-order}
\end{align}
for all large $k\in\N$.
First, note that
$\overline{H}(p)=H(p)=|p|-1$, 
$\overline{\psi}(x)=|\sin\pi x|^\alpha$, and
\begin{align*}
    L(q)=\overline{L}(q)
    =\left\{
    \begin{aligned}
        &1 &&\text{if } -1\leq q \leq 1,\\
        &+\infty &&\text{if } |q|>1,
    \end{aligned}
    \right.
\end{align*}
and then, using the optimal control formula,
we can compute
\begin{align}
    u^{\varepsilon_k}(x,t)
    &= \inf\left\{
    t-\theta + \left\{
    \begin{aligned}
        &0 &&\text{if } \theta =0,\\
        &|\sin\pi x|^\alpha 
        + \left| \cos k\pi x \right|^\alpha
        &&\text{if } \theta\in(0,t]
    \end{aligned}
    \right.
    \; : \;
    \begin{aligned}
        &\gamma\in\AC([0,t]), \theta\in[0,t],\\
        &\gamma(t)=x, |\dot{\gamma}|\leq 1\text{ on } [0,t]
    \end{aligned}
    \right\}\notag\\
    &=\min\big\{
    t, \ 
    \inf 
    \left\{s+|\sin\pi z|^\alpha 
        + \left| \cos k\pi z \right|^\alpha
        \; : \; s\in[0,t), |z-x|\leq s
    \right\}
    \big\}.\label{eq:3.3-u-ep-formula}
\end{align}
Similarly, we can also compute
\begin{align}
    u(x,t)
    = \min\big\{
    t,\ 
    \inf\left\{
    s+ |\sin \pi z |^\alpha \; : \; 
    s\in [0,t), |z-x| \leq s
    \right\}
    \big\}.\label{eq:3.3-u-formula}
\end{align}

\begin{figure}[h]
\centering
\begin{tabular}{cc}
\includegraphics[height=5cm]{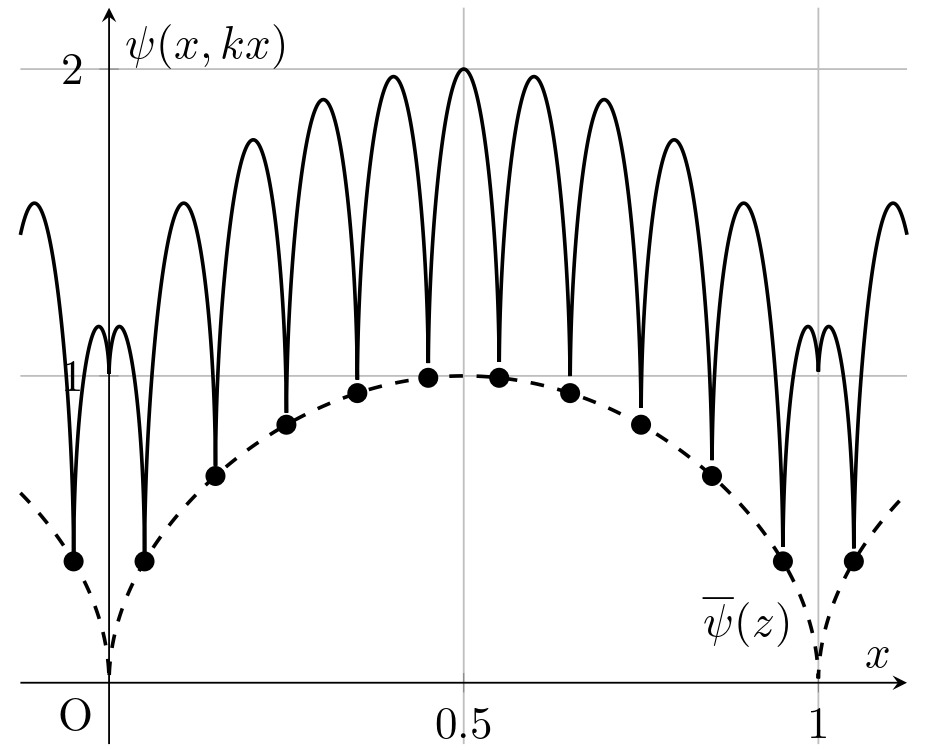} &
\includegraphics[height=5cm]{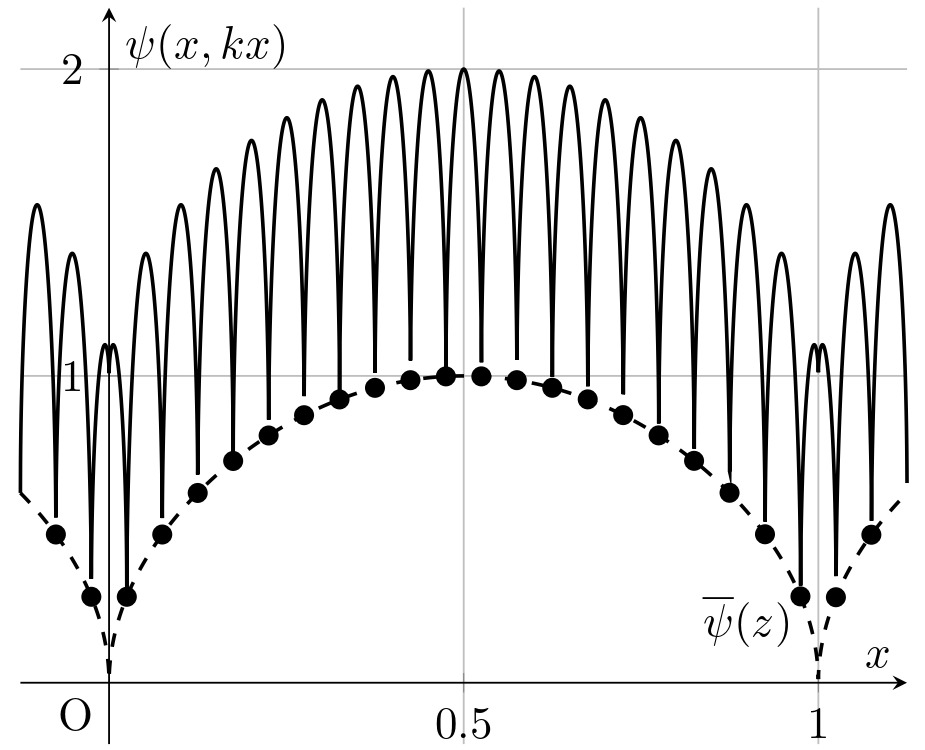}\\
$\alpha=1/2$, $k=10$ & $\alpha=1/2$, $k=20$
\end{tabular}
\caption{The graph of $\psi(z,kz)$}
\end{figure}

To prove (\ref{eq:conv-order}),
we focus on the value of $u^{\varepsilon_k}$ and $u$
at $(x,t)=(1/2,2)$.
When $t\geq2$,
one can see that the minimum of both 
(\ref{eq:3.3-u-ep-formula})
and (\ref{eq:3.3-u-formula})
are attained by the second terms
since
\begin{align*}
    &\inf\left\{
    s+|\sin\pi z|^\alpha +|\cos k \pi z|^\alpha
    \; : \; s\in [0,t),\; |z-x| \leq s
    \right\}
    \leq 0+ \left|\sin \frac{\pi}{2}\right|^\alpha 
    +\left| \cos \frac{k\pi}{2}\right|^\alpha
    \leq 2,\\
    &\inf\left\{
    s+|\sin\pi z|^\alpha
    \; : \; s\in [0,t),\; |z-x| \leq s
    \right\}
    \leq 0 + \left|\sin \frac{\pi}{2}\right|^\alpha
    \leq 1 <2.
\end{align*}
Using this estimate and symmetry of $\psi(z,kz)$ and $\overline{\psi}(z)$ with respect to $z=1/2$,
we can calculate as
\begin{align}
    u^{\varepsilon_k}\left(
    \frac{1}{2},2
    \right)
    &= \inf\left\{
    s + |\sin\pi z|^\alpha +|\cos k \pi z|^\alpha
    \; : \; s\in[0,2),\; z\in 
    \left[\frac{1}{2}-s,\frac{1}{2}+s\right]\cap
    \left[0,\frac{1}{2}\right]
    \right\}\notag\\
    &= \min\left\{
    \frac{1}{2}-z + |\sin\pi z|^\alpha +|\cos k \pi z|^\alpha
    \; : \; 0\leq z \leq \frac{1}{2}
    \right\},\label{eq:3.3-u-ep-formula2}
\end{align}
and similarly,
\begin{align}
    u\left(
    \frac{1}{2},2
    \right)
    &= \min\left\{
    \frac{1}{2}-z + |\sin\pi z|^\alpha
    \; : \; 0\leq z \leq \frac{1}{2}
    \right\}.\label{eq:3.3-u-formula2}
\end{align}
By considering all the points 
where the graph of the function $z\mapsto z+$(constant) touches
the graph of $|\sin\pi z|^\alpha +|\cos k \pi z|^\alpha$
or $|\sin\pi z|^\alpha$ from below,
we obtain that the minimum of (\ref{eq:3.3-u-ep-formula2})
and (\ref{eq:3.3-u-formula2}) are
\begin{align*}
     u^{\varepsilon_k}\left(
    \frac{1}{2},2
    \right)
    &=\min\left\{
    \frac{1}{2}-z + |\sin\pi z|^\alpha
    \; : \;
    z= \frac{1}{k}\left(j+\frac{1}{2}\right),\;
    j=0,1,..., \left[\frac{k}{2}\right]
    \right\}\\
    &=\frac{1}{2}-\frac{1}{2k}
    + \left|\sin \frac{\pi}{2k}\right|^\alpha 
\end{align*}
and 
\begin{align*}
     u\left(
    \frac{1}{2},2
    \right)
    =\frac{1}{2},
\end{align*}
respectively.
Therefore, we have the convergence rate at $(x,t)=(1/2,2)$ is
\begin{align*}
    &u^{\varepsilon_k}\left(
        \frac{1}{2},2
    \right)
    -u\left(
        \frac{1}{2},2
    \right)
    =\left|
        \sin \frac{\pi}{2k}
    \right|^\alpha -\frac{1}{2k}
    =\frac{1}{k^\alpha}
    \left(
        \left|
            k \sin \frac{\pi}{2k}
        \right|^\alpha
        -\frac{1}{2k^{1-\alpha}}
    \right)\\
    &\geq\frac{1}{k^\alpha} \times \left\{
        k^\alpha
        \left(
            \frac{\pi}{2k}-\frac{1}{6}\left(
                \frac{\pi}{2k}
            \right)^3
        \right)^\alpha - \frac{1}{2k^{1-\alpha}}
    \right\}
    \geq \frac{1}{k^\alpha} \times \left\{
        \left(
            \frac{\pi}{2}-\frac{\pi^3}{48}
        \right)^\alpha 
        -\frac{1}{2}
    \right\}.
\end{align*}
This implies (\ref{eq:conv-order})
and that convergence rate of $u^\varepsilon$ is strictly
slower than $O(\varepsilon)$.
\end{example}

\begin{remark}
Here, we present a heuristic argument indicating
that $u^\varepsilon$ converges 
at the rate $O(\varepsilon^\alpha)$ in (\ref{eq:OP-ex}).
We consider (\ref{eq:OP-ex}) by separating it into two equations:
\begin{align}
    \left\{
    \begin{aligned}
        &\max \left\{u^\varepsilon_t - 1 ,\ u^\varepsilon - \psi\left(x,\frac{x}{\varepsilon}\right)\right\} =0
        && \text{in } \R \times (0,\infty),\\
        & u^\varepsilon (x,0)= u_0(x) 
        &&\text{on } \R,
    \end{aligned}
    \right.\label{eq:nucleation}
\end{align}
and
\begin{align}
    \left\{
    \begin{aligned}
        &\max \left\{u^\varepsilon_t +|u^\varepsilon_x| ,\ u^\varepsilon - \psi\left(x,\frac{x}{\varepsilon}\right)\right\} =0 
        &&\text{in } \R\times (0,\infty),\\
        &u^\varepsilon(x,0)=u_0(x)
        &&\text{on } \R.
    \end{aligned}
    \right.\label{eq:propagation}
\end{align}
Note that
if $u^\varepsilon$ satisfies (\ref{eq:propagation}),
then we have $u^\varepsilon_t \leq - |u^\varepsilon_x|\leq 0 $
and this means the graph of $u^\varepsilon$ never move to upward.
Thus, as long as $u_0\leq \psi$, we can treat (\ref{eq:propagation}) as 
\begin{align}
    \left\{
    \begin{aligned}
        &u^\varepsilon_t +|u^\varepsilon_x| =0 
        &&\text{in } \R\times (0,\infty),\\
        &u(x,0)=u_0(x)
        &&\text{on } \R.
    \end{aligned}
    \right.\label{eq:propagation-no-obs}
\end{align}
Let $S^{\varepsilon}_1(t):\BUC(\R)\to\BUC(\R)$
and $S_2^\varepsilon(t):\BUC(\R)\to\BUC(\R)$
be the solution operator of (\ref{eq:nucleation})
and (\ref{eq:propagation-no-obs}), respectively.
(i.e., for every $g\in\BUC(\R)$,
the function $(x,t)\mapsto S^\varepsilon_1(t)[g](x)$
and $(x,t)\mapsto S^\varepsilon_2(t)[g](x)$
is the unique viscosity solution to (\ref{eq:nucleation})
and (\ref{eq:propagation-no-obs})
for the initial datum $g$, respectively.)
Then, for small $h>0$,
we can interpret $u^\varepsilon$, the unique solution to (\ref{eq:OP-ex}), as
\begin{align}
    u^\varepsilon (x,t)
    \approx
    S^\varepsilon_1(t-Nh)
    \circ
    (S^\varepsilon_2(h/2)
    \circ S^\varepsilon_1(h/2))^N[u_0](x),
    \label{eq:Kato-Torotter}
\end{align}
where $N=N(t,h)\in\N$ is an integer determined by $Nh \leq t <(N+1)h$.
Moreover,
the semigroup actions $S_1^\varepsilon$ and $S_2^\varepsilon$
can be described as follows:
\begin{align}
    S_1^\varepsilon (t)[g](x)
    = \min\left\{ 
        g(x)+t,\ 
        \psi\left(
            x,\frac{x}{\varepsilon}
        \right)
    \right\},
    \label{eq:nucleation-effect}
\end{align}
and, for $S_2^\varepsilon (t)[g](x)$, 
it follows that 
\begin{align}
    \text{every level sets }
    \partial\{S^\varepsilon_2(t)[g](\cdot)>c\}
    \text{ moves by constant inward velocity } \equiv 1.
    \label{eq:propagation-effect}
\end{align}

Formal approximation (\ref{eq:Kato-Torotter}) provides an intuitive description of the evolution of the graph.
Starting from the zero initial condition, 
the solution is given by $u^{\varepsilon_k}=t$ for a short initial period.
Once the graph reaches the obstacle,
several isolated ``islands'' begin to form.
After that, every level sets of each island propagate inward with unit speed by the effect of (\ref{eq:propagation-effect}), 
while the graph is pushed upward at unit speed
by the effect of (\ref{eq:nucleation-effect}).
As time progresses, 
the effects of (\ref{eq:nucleation-effect}) and (\ref{eq:propagation-effect}) eventually balance each other,
and the graph completely stops evolving after a finite time.
(From this viewpoint, we treat (\ref{eq:OP-ex})
as a variant of birth-and-spread-type equations. 
The geometric evolution of solution graphs for such equations was investigated in \cite{MR3562367} and \cite{MR4226650}.)
\begin{figure}[h]
    \centering
\begin{tabular}{c}
\includegraphics[height=5cm]{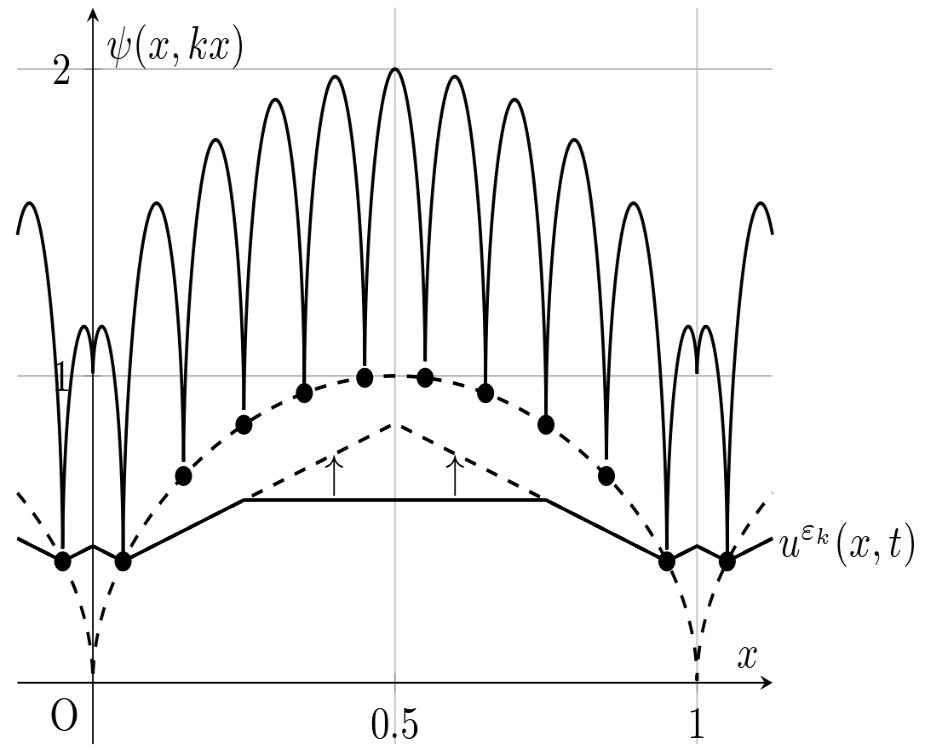} 
\end{tabular}
\caption{Evolution of $u^{\varepsilon_k}$}
\end{figure}

The height of the maximum point of the resulting stationary profile can be interpreted as the sum of two contributions:
the height at which the graph first touches the obstacle, and the height of the right isosceles triangular islands formed afterward.
These quantities are of $O(k^{-\alpha})$ and $1/2-1/k$, respectively. 
Consequently, 
the decline of the convergence rate of $u^{\varepsilon_k}-u$ is caused by the contribution of the $O(k^{-\alpha})$-scale term here.
See figure 3.

\begin{figure}[h]
    \centering
\begin{tabular}{cc}
\includegraphics[height=5cm]{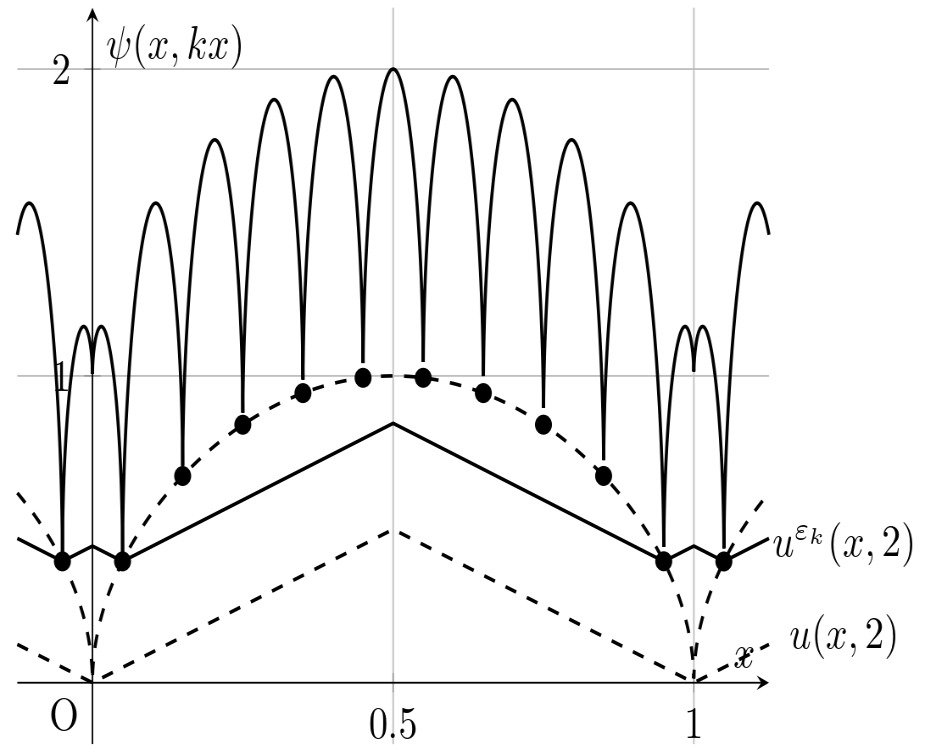} &
\includegraphics[height=5cm]{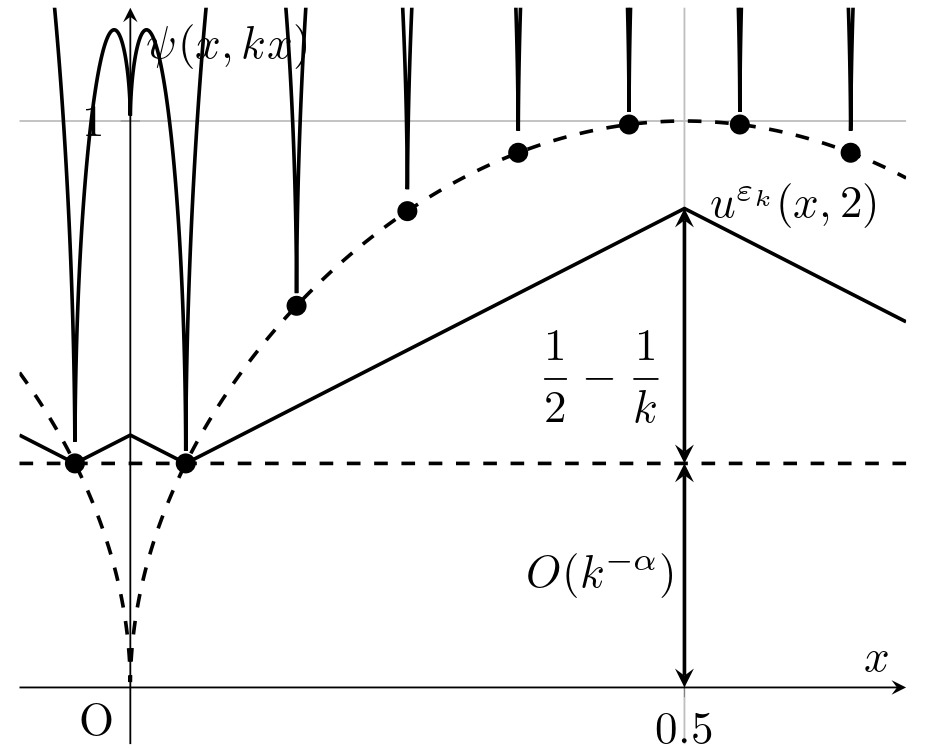}
\end{tabular}
\caption{Resulting profile of $u^{\varepsilon_k}$}
\end{figure}
\end{remark}

\section*{Acknowledgements}
The authors express their sincere gratitude
to Professor Hiroyoshi~Mitake,
Dr. Panrui~Ni,
and Professor Hung~Vinh~Tran 
for fruitful discussions and their valuable technical advice throughout this project.
They also thank
Professor Federica~Dragoni,
Professor Diogo~Gomes,
and Professor Nao~Hamamuki
for their helpful comments and encouragement.

\bibliographystyle{alpha}

\end{document}